\DeclarePairedDelimiter{\floor}{\lfloor}{\rfloor}
\theoremstyle{plain}
\newtheorem{theorem}{Theorem}[section]
\newtheorem{lemma}{Lemma}[section]
\newtheorem{corollary}{Corollary}[section]
\newtheorem{proposition}{Proposition}[section]
\newtheorem{conjecture}{Conjecture}
\theoremstyle{remark}
\newtheorem{remark}{Remark}
\theoremstyle{definition}
\newtheorem{definition}{Definition}
\title{Young Graphs: 1089 et Al.}
\author{L. H. Kendrick}
\begin{document}
\maketitle
\begin{abstract}
This paper deals with those positive integers $N$ such that, for given integers $g$ and $k$ with $2\le k<g$, the base-$g$ digits of $kN$ appear in reverse order from those of $N$.  Such $N$ are called \textit{$(g, k)$ reverse multiples}.  Young, in 1992, developed a kind of tree reflecting properties of these numbers; Sloane, in 2013, modified these trees into directed graphs and introduced certain combinatorial methods to determine from these graphs the number of reverse multiples for given values of $g$ and $k$ with a given number of digits.  We prove Sloane's isomorphism conjectures for 1089 graphs and complete graphs, namely that the Young graph for $g$ and $k$ is a 1089 graph if and only if $k+1\mid g$ and is a complete Young graph on $m$ nodes if and only if $\floor*{\gcd(g-k, k^2-1)/(k+1)}=m-1$.  We also extend his study of cyclic Young graphs and prove a minor result on isomorphism and the nodes adjacent to the node $[0, 0]$.
\end{abstract}

\section{Introduction}\label{intro}
This paper studies a certain type of positive integer, defined as follows.
\begin{definition}[Reverse Multiple]
Given $g, k\in\mathbb{Z}$, with $2\le k<g$, a \textit{$(g, k)$ reverse multiple} is a positive integer $N$ such that the base-$g$ representations of $N$ and $kN$ are reverses of each other.  That is, if $(c_{n-1}, c_{n-2}, \ldots, c_1, c_0)_g$ denotes $\sum_{i=0}^{n-1}c_ig^i$, then $N=(a_{n-1}, a_{n-2}, \ldots, a_1, a_0)_g$ and $kN=(a_0, a_1, \ldots, a_{n-2}, a_{n-1})_g$, for some integers $a_i$ with $0\le a_i\le g-1$ and $a_{n-1}$ nonzero.
\end{definition}

Ironically, the most prominent mention of these numbers may well have curtailed their prominence: G. H. Hardy, in \textit{A Mathematician's Apology}, refers to the fact that $1089$ and $2178$ are the only four-digit $(10, k)$ reverse multiples as one ``very suitable for puzzle columns and likely to amuse amateurs,'' but ``not serious'' and ``not capable of any significant generalization.''  While we abstain from judgment on the first two counts, it is clear, after the intervening decades, that Hardy misjudged regarding the third.\footnote{The only refuge for his position being in the nebulous qualifier ``significant.''}  A number of works generalize the problem, a list of most of which may be found in Sloane's \cite{sloane} 2013 paper on the topic; of the four works \cite{hoey, holtS, holtL, holtAsym} on the topic cited here not found by Sloane, three were published after his paper, by the same author.  Hardy's comment, however, while perhaps self-fulfilling by way of his stature, may possess some kernel of truth: Sloane's list of references, while appreciable, is rather short for a problem given such exposure as a mention in Hardy's book, albeit a disparaging one, and the problem does, in fact, appear with some frequency in texts on recreational mathematics or mathematical curiosities (e.g., by Gardner \cite{gardner} or Wells \cite{PengNumb}).

We take an approach to the problem, using certain labeled graphs, begun by Young \cite{youngRM, youngTrees} in 1992 and developed further by Sloane in 2013.  Most serious work takes an approach more algebraic than Young's, and studies the problem for a small, preset number of digits for a general base $n$.  This is largely due to the influence of Sutcliffe \cite{sutcliffe}, who, in 1966, initiating the largest interlinked body of work on the problem, determined all bases $n$ in which there exists a two-digit reverse multiple; Sutcliffe cites no works other than a general number theory reference.  In sequence, Kaczynski \cite{unabomber},\footnote{``Better known for other work.'' \cite{pudwell}} in 1968, proved Sutcliffe's conjecture on three-digit reverse multiples, and Pudwell \cite{pudwell}, in 2007, extended Kaczynski's methods for reverse multiples of two or three digits to those of four and five, the behavior of the latter proving to be rather more complicated than that of the former.  Pudwell also proved that $(b, b-1, g-b-1, g-b)_g$ is a $(g, k)$ reverse multiple when $g=b(k+1)$, as did Klosinski and Smolarski \cite{klosinski} in 1969, independently of the above material (they cite none of it).  Outside of passing mentions in books on recreational mathematics, Sloane found two other treatments of the problem: a 2013 paper by Webster and Williams \cite{WW}, which deals with the case in which $g=10$, and a 1975 paper by Grimm and Ballew \cite{GB}.  The latter, after listing all $(10, k)$ reverse multiples less than $10^9$, presents an algorithm that generates three-digit reverse multiples through a process rather distinct from those of other works, in that it begins with the numbers $r_i$ (defined in Subsection \ref{subsec:eqtns}) and shows how to construct the base $g$, multiplier $k$, and digits $a_i$ of a reverse multiple, while other sources fix $g$, and sometimes $k$, and then construct the digits $a_i$ and numbers $r_i$; Grimm and Ballew also differ from other sources by allowing reverse multiples to have leading zeroes.  Kaczynski notes that Prasert Na Nagara of Kasetsart University in Thailand arrived at results similar to his independently, but no reference is given, and efforts to contact both regarding this work have failed.

More recently, Holt \cite{holtS, holtL, holtAsym} has taken a variant approach to the problem, considering reverse multiples, which he calls ``palintiples,''\footnote{He credits the term to Hoey \cite{hoey}.  Strictly speaking, ``palintiple'' refers to a number that is a multiple of its reverse, rather than a divisor, as with reverse multiples.} with any number of digits. His papers begin the project of classifying palintiples based on ``the structure of their carries,'' a partitioning rationale similar to that of the equivalence classes developed in this paper, on which similarities we comment, and of characterizing specific classes of palintiples. He also extends Pudwell's study of four- and five-digit reverse multiples and middle-digit truncation, and begins the study of ``palinomials'' and palintiple derivation: respectively, polynomials constructed from the digits of a given palintiple and the process of obtaining a new palintiple from the carries of a given palintiple. The latter, as he demonstrates \cite{holtAsym}, may be fruitfully used to study seemingly rather intractable equivalence classes of reverse multiples. Holt's approach is more in the algebraic style of Sutcliffe, Kaczynski, and Pudwell, although he notes the utility of Young's approach, as well as a number of intriguing phenomena among its elements (see Section \ref{sec:futres}).

Young, with two papers in 1992, began studying the problem via a kind of infinite tree, in which edges are labeled with pairs of digits $(a_{n-1-i}, a_i)$ and nodes with pairs of numbers $[r_{n-2-i}, r_i]$, defined in Section \ref{sec:background}, and found a correspondence relating reverse multiples and certain kinds of nodes in the tree.  Sloane reformulated these trees into labeled directed graphs, which he calls \textit{Young graphs}, allowing for the application of the transfer matrix method to find the number of $(g, k)$ reverse multiples with a given number of digits (Section 3 of his paper).  He also discusses data gained from constructing all Young graphs for $g\le 40$.  While Young's approach can be used to exhaustively list all $(g, k)$ reverse multiples for given $g$ and $k$, Sloane's approach more obviously raises the questions regarding isomorphism that are dealt with in this paper, and so will be used here.

Young graphs, not reverse multiples, are the predominant focus of this paper, and it is partly our object to demonstrate the utility of the graphical approach to this problem.  We discuss several types of graphs mentioned by Sloane, proving two of his conjectures on isomorphism, and also include minor results determining two points' images under isomorphism and describing the nodes around $[0, 0]$.  Section \ref{sec:background} restates the pertinent work of Sloane and Young, Section \ref{sec:1089graphs} proves (Theorem \ref{thm:1089graph}) Sloane's conjecture on 1089 graphs, Section \ref{sec:completegraphs} proves (Theorem \ref{thm:completegraph}) Sloane's conjecture on complete graphs, and Section \ref{sec:cyclicgraphs} proves minor results (Theorems \ref{thm:z3} and \ref{thm:z5}) on cyclic graphs on three and five nodes and contains a conjecture related to cyclic graphs.  Section \ref{sec:completegraphs} also contains a result on isomorphism (Theorem \ref{thm:0iso}), a result on the predecessors of the node $[0, 0]$ (Corollary \ref{cor:predsof0}), and a result on two-digit reverse multiples (Proposition \ref{prop:2digit}), which is related to the work of Sutcliffe.  In Section \ref{sec:futres}, we discuss possible further research on the topic, including several isomorphism conjectures (Subsection \ref{subsec:conj}) made from examination of Young graphs for low values of $g$, although of a less complete nature than those of Sloane.

\section{Background on Young Graphs}\label{sec:background}
Young \cite{youngRM} developed a method for exhaustively listing all $(g, k)$ reverse multiples for given values of $g$ and $k$ by generating a labeled infinite tree; Sloane modified this approach so that the tree became a kind of directed graph.  We present Young's method under Sloane's modification.

\subsection{Equations}\label{subsec:eqtns}
Let $N=(a_{n-1}, a_{n-2}, \ldots, a_1, a_0)_g$ be a $(g, k)$ reverse multiple, with $0\le a_i\le g-1$ and $a_{n-1}\neq 0$.  Then $kN=\text{Reverse}_g(N)$, so there are integers $r_0, r_1,\ldots,r_{n-2}$ with $0 \le r_i <g$ such that
\begin{equation*}
\begin{array}{rcl}
ka_0 & = & a_{n-1}+r_0g \\
ka_1+r_0 & = & a_{n-2}+r_1g \\
ka_2+r_1 & = & a_{n-3}+r_2g \\
 & \vdots & \\
ka_{n-2}+r_{n-3} & = & a_1+r_{n-2}g \\
ka_{n-1}+r_{n-2} & = & a_0.
\end{array}
\end{equation*}
These equations are those that arise from the base-$g$ columns multiplication representing the equation $kN=\text{Reverse}_g(N)$, written

\begin{center}
\begin{tabular}{cccccccc}
 & {\scriptsize$(r_{n-2})$} & {\scriptsize$(r_{n-3})$} & {\scriptsize$(r_{n-4})$} & \ldots & {\scriptsize$(r_1)$} & {\scriptsize$(r_0)$} &  \\
 & $a_{n-1}$ & $a_{n-2}$ & $a_{n-3}$ & \ldots & $a_2$ & $a_1$ & $a_0$ \\
$\times$ &     &                    &                    &            &             &              & $k$ \\
\hline
 & $a_0$ & $a_1$ & $a_2$ & \ldots & $a_{n-3}$ & $a_{n-2}$ & $a_{n-1}$.
\end{tabular}
\end{center}
While the $r_i$ are defined between $0$ and $g-1$ inclusive, Young \cite{youngRM} proved that $r_i \le k-1$ for all $i$ and that $r_0>0$.  For convenience we may define $r_{-1}=r_{n-1}=0$ so that for all $i$, $0\le i \le n-1$, we have
\begin{equation}\label{eq:rmdig}
ka_i+r_{i-1}=a_{n-1-i}+r_ig.
\end{equation}
\begin{remark}\label{rem:notation}
We use the notation of Young and Sloane.  Holt uses a similar setup, focusing on the numbers $r_i$, but his notation is closer to that of Sutcliffe, Pudwell, and Kaczynski: $g$, $k$, $n$, $a_i$, and $r_i$ are denoted in his papers by $b$, $n$, $k+1$, $d_{k-i}$, and $c_{i+1}$, respectively.
\end{remark}

\subsection{Generating the Graph $H(g, k)$}\label{subsec:hgraph}
$H(g, k)$ is a kind of directed graph, between a subset of the paths of which and $(g, k)$ reverse multiples Young \cite{youngRM} found a correspondence, which we state in Subsection \ref{subsec:youngsthm}.  The nodes in $H(g, k)$ will be labeled with pairs $[r_{n-2-i}, r_i]$ of the $r_i$s and the edges with pairs $(a_{n-1-i}, a_i)$ of the $a_i$s.  To construct this graph, we take the above equations (\ref{eq:rmdig}) in pairs

\begin{equation}\label{eq:eqnpair}
\begin{array}{rcl}
ka_i+r_{i-1} & = &a_{n-1-i}+r_ig \\
ka_{n-1-i}+r_{n-2-i} & = & a_i+r_{n-1-i}g
\end{array}
\end{equation}
and solve recursively for the pairs $[r_{n-2-i}, r_i]$: given a pair $[r_{n-1-i}, r_{i-1}]$, we can check through all values of $a_i$ and $a_{n-1-i}$ (which must be between $0$ and $g-1$ inclusive) for the pairs $(a_{n-1-i}, a_i)$ that give integer values for $r_{n-2-i}$ and $r_i$ between $0$ and $k-1$ inclusive.  If a pair $(a_{n-1-i}, a_i)$ gives a satisfactory pair $[r_{n-2-i}, r_i]$ then a directed edge, labeled $(a_{n-1-i}, a_i)$, is drawn from the node labeled $[r_{n-1-i}, r_{i-1}]$ to the node labeled $[r_{n-2-i}, r_i]$ (with one exception discussed below).  As there are a finite number of potential nodes, this process must terminate.

This process begins at $[r_{-1}, r_{n-1}]$, called the \textit{starting node} and indicated as $[[0, 0]]$ with double brackets to distinguish it from the node labeled $[0, 0]$.  No edges are to end at the starting node by definition, and no edges leading from the starting node can have labels with $0$s, because this would imply $a_{n-1}=0$ or $a_0=0$, which the definition of $a_{n-1}$ and $a_0$ forbids (although not for Grimm and Ballew).  The node $[0, 0]$ cannot function as the starting node for this reason, as edges from $[0, 0]$ are allowed $0$s, and also for the sake of convenience in Young's correspondence and Sloane's enumerations (Section 3 in his paper; they are not discussed here); this is why we must have a starting node.

$H(g, k)$ is the graph thus generated.

\subsection{Pivot Nodes and Young Graphs}\label{subsec:youngsthm}
The following definition, due to Sloane, is made to condense the statements of Young's \cite{youngRM} main theorems, Theorems 1 and 2 in her first paper.
\begin{definition}[Pivot Nodes]\label{def:pivnodes}
An \textit{even pivot node} is a node of the form $[a, a]$; an \textit{odd pivot node} is a node of the form $[r, s]$ that is a direct predecessor of the node $[s, r]$ (including even pivot nodes with self-loops).  The starting node is not considered a pivot node.
\end{definition}

Note that the edge label between $[r, s]$ and $[s, r]$ must have the form $(a, a)$ because equation (\ref{eq:eqnpair}) implies that the edge label from $[r, s]$ to $[s, r]$ is $((rg-s)/(k-1), (rg-s)/(k-1))$.

Theorems 1 and 2 in Young's \cite{youngRM} first paper link these nodes of $H(g, k)$ to $(g, k)$ reverse multiples, and allow us to read off all $(g, k)$ reverse multiples from $H(g, k)$.  The proofs of those theorems are omitted; the results are stated as they are rephrased by Sloane for directed graphs.

\begin{theorem}[Young's Theorem]\label{thm:youngsthm}
There is a one-to-one correspondence between the $(g, k)$ reverse multiples with an even number of digits and the paths in $H(g, k)$ from the starting node to even pivot nodes and a one-to-one correspondence between the $(g, k)$ reverse multiples with an odd number of digits and the paths in $H(g, k)$ from the starting node to odd pivot nodes.  The path leading to an even pivot node that consists of the edges labeled $(a_{n-1}, a_0), (a_{n-2}, a_1),\ldots,(a_{n/2}, a_{n/2-1})$, in that order, corresponds to the reverse multiple $(a_{n-1}, a_{n-2}, \ldots, a_1, a_0)_g$.  The path leading to the odd pivot node $[r, s]$, which directly precedes $[s, r]$ by the edge labeled $(a_{(n-1)/2}, a_{(n-1)/2})$, that consists of the edges labeled $(a_{n-1}, a_0), (a_{n-2}, a_1),\ldots,(a_{(n+1)/2}, a_{(n-3)/2})$ corresponds to the reverse multiple $(a_{n-1}, a_{n-2}, \ldots, a_1, a_0)_g$.
\end{theorem}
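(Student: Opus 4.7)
The plan is to produce, in each parity of $n$, a pair of mutually inverse explicit maps between $(g,k)$ reverse multiples and paths of the specified type in $H(g,k)$. The guiding picture is that the node $[r_{n-2-i},r_i]$ reached after step $i$ bundles together \emph{both} carries one obtains when performing the column multiplication $kN=\mathrm{Reverse}_g(N)$ from the outside in --- the first coordinate tracking the accumulating carry at the left end, the second at the right --- and each edge label $(a_{n-1-i},a_i)$ records the next outer/inner digit pair being consumed. Under this reading the edge-generating rule (\ref{eq:eqnpair}) is literally the simultaneous pair of equations (\ref{eq:rmdig}) for the symmetric indices $i$ and $n-1-i$.

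For the forward map I would begin with a reverse multiple $N$, note that the $r_i$ are uniquely determined by (\ref{eq:rmdig}) and satisfy $0\le r_i\le k-1$ by Young's bound cited in Section~\ref{subsec:eqtns}, and read off an edge-by-edge path from $[[0,0]]$. When $n$ is even, the path terminates after $n/2$ edges at $[r_{n/2-1},r_{n/2-1}]$, which is visibly an even pivot. When $n$ is odd, it terminates after $(n-1)/2$ edges at $[r_{(n-1)/2},r_{(n-3)/2}]$; the unused middle equation (\ref{eq:rmdig}) for $i=(n-1)/2$ then produces the outgoing edge to $[r_{(n-3)/2},r_{(n-1)/2}]$ labeled $(a_{(n-1)/2},a_{(n-1)/2})$, certifying that the terminal node is an odd pivot.

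For the reverse map I would, given a path of either type, recover the $a_i$ from edge labels and the $r_i$ from node coordinates, extending by $r_{-1}=r_{n-1}=0$. In the even case the ``outer'' and ``inner'' indexings of the $r_i$ meet at $r_{n/2-1}$, and the evenness of the terminal pivot $[a,a]$ is exactly the assertion that the two definitions agree; in the odd case the indexings leave a one-step gap, and the missing $a_{(n-1)/2}$ and $r_{(n-1)/2}$ are supplied consistently by the pivot condition together with the remark following Definition~\ref{def:pivnodes} that any edge $[r,s]\to[s,r]$ carries a forced symmetric label with $a=(rg-s)/(k-1)$, which is precisely the value demanded by (\ref{eq:rmdig}) at $i=(n-1)/2$. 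The nonvanishing $a_{n-1}\ne 0$ is free from the starting-node convention, and the two constructions are inverse to each other by inspection.

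The only step that does not write itself is the parity-dependent index match at the middle of the path: verifying that the even-pivot condition is exactly what makes the two halves of the carry sequence dovetail, and that the odd-pivot condition is exactly what supplies the missing middle. This I expect to be routine once the pair (\ref{eq:eqnpair}) for $i=\lfloor (n-1)/2\rfloor$ is written out in each parity, with no deeper obstacle in sight.
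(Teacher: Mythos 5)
Your proposal is essentially sound, but note that the paper does not prove this theorem at all: it explicitly omits the proof and defers to Theorems 1 and 2 of Young's first paper, so there is no in-text argument to compare yours against. What you have written is a faithful reconstruction of Young's original argument in Sloane's directed-graph language, and the index bookkeeping is right: after consuming $i+1$ edges the walk sits at $[r_{n-2-i},r_i]$, so for even $n$ it halts at $[r_{n/2-1},r_{n/2-1}]$ (an even pivot), and for odd $n$ at $[r_{(n-1)/2},r_{(n-3)/2}]$, where the single self-paired instance of (\ref{eq:eqnpair}) at $i=(n-1)/2$ supplies exactly the forced edge $(a_{(n-1)/2},a_{(n-1)/2})$ to the reversed node. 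The one step you should not leave to ``inspection'' is that the inverse map actually lands in the set of reverse multiples: given a path, the recovered digits and carries satisfy the $n$ equations (\ref{eq:rmdig}), but you still need the converse of the derivation in Subsection~\ref{subsec:eqtns}, namely that multiplying the $i$th equation by $g^i$ and summing makes the carry terms telescope (using $r_{-1}=r_{n-1}=0$) to give $kN=\mathrm{Reverse}_g(N)$. That computation is two lines, and Young's bound $0\le r_i\le k-1$ together with the starting-node convention ($a_{n-1},a_0\neq 0$) handles the remaining admissibility checks in the forward direction, so there is no genuine obstruction; just write the telescoping out rather than asserting it.
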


Nodes of which no pivot nodes are successors are therefore unimportant, if we are interested in Young's correspondence between paths and reverse multiples.  Thus the Young graph is defined as follows.

\begin{definition}[Young Graph]\label{def:ygraph}
The \textit{Young graph} for $g$ and $k$, denoted $Y(g, k)$, is the labeled directed graph obtained by removing from $H(g, k)$ all nodes that are not pivot nodes and that do not precede any pivot nodes, along with all edges starting or ending at these nodes.
\end{definition}
\begin{remark}\label{rem:truncate}
Sutcliffe, Kaczynski, Pudwell, and, most generally, Holt discuss \textit{middle digit truncation} of reverse multiples; that is, the question of when removing the middle digit of a $(g, k)$ reverse multiple with an odd number of digits yields another $(g, k)$ reverse multiple.  These discussions arose from Sutcliffe's Theorem 3, which found a correspondence between two-digit reverse multiples and three-digit reverse multiples involving such truncation: namely, that if $(a, b)_g$ is a $(g, k)$ reverse multiple, with $0\le a, b\le g-1$, then $(a, a+b, b)_g$ is also a $(g, k)$ reverse multiple.  Theorem 10 of Holt's \cite{holtL} first paper, which characterizes the occurrences of truncations that yield new reverse multiples, states that a $(g, k)$ reverse multiple $(a_{n-1}, a_{n-2}, \ldots, a_1, a_0)_g$, for odd $n$, may be truncated by middle digit removal to obtain a new $(g, k)$ reverse multiple if and only if $r_{(n-3)/2}=r_{(n-1)/2}$; while Holt's proof is algebraic, this result may also be seen as a direct consequence of Young's theorem and the fact that the edge labels $(a_{n-1-i}, a_i)$ in a Young graph are distinct (proven by Sloane), because such reverse multiples correspond to paths to nodes that are both odd and even pivot nodes.
\end{remark}

\subsection{Examples}\label{subsec:ex}

We include several examples to clarify the above material. Consider $Y(10, 9)$, shown in Figure \ref{fig:y109}; to construct this graph, we must first construct $H(10, 9)$ and then remove all nodes that do not precede a pivot node, as per Definition \ref{def:ygraph}. To construct the graph $H(10, 9)$, one begins at the starting node, taking equation (\ref{eq:eqnpair}) with $g=10$, $k=9$, and $i=0$ to obtain
\begin{equation*}
\begin{array}{rcl}
9a_0+0 & = & a_{n-1}+10r_0 \\
9a_{n-1}+r_{n-2} & = & a_0+10\cdot 0.
\end{array}
\end{equation*}
Note that here the value $0$ has been substituted for the terms $r_{i-1}$ and $r_{n-1-i}$ that appear in (\ref{eq:eqnpair}), since $r_{-1}=r_{n-1}=0$ by definition (Subsection \ref{subsec:eqtns}).  Checking through all pairs $(a_{n-1}, a_0)$ with $0<a_0, a_{n-1}\le g-1=9$, one finds that the only pair that gives integer values for $r_0$ and $r_{n-2}$ between $0$ and $k-1$ inclusive is the pair $(1, 9)$, and the corresponding pair $[r_{n-2}, r_0]$ is $[0, 8]$. Therefore we draw a directed edge from $[[0, 0]]=[r_{n-1}, r_{-1}]$ to $[0, 8]=[r_{n-2}, r_0]$ labeled with the pair $(1, 9)=(a_{n-1}, a_0)$. We now repeat the above process with $i=1$, substituting the values for $r_0$ and $r_{n-2}$ into equation (\ref{eq:eqnpair}) to obtain
\begin{equation}\label{eq:ex1}
\begin{array}{rcl}
9a_1+8 & = & a_{n-2}+10r_1 \\
9a_{n-2}+r_{n-3} & = & a_1+10\cdot 0.
\end{array}
\end{equation}
Checking through all possible $(a_{n-2}, a_1)$, one finds the only solution to (\ref{eq:ex1}) to be $(a_{n-2}, a_1)=(0, 8)$ and $[r_{n-3}, r_1]=[8, 8]$. The process is then repeated with $i=2, 3\ldots$ until no new nodes are found, and the resulting graph is shown in Figure \ref{fig:y109}. Note that $[0, 0]$ and $[8, 8]$ are both even and odd pivot nodes, and that all nodes in the graph precede both $[0, 0]$ and $[8, 8]$. Therefore, for $g=10$ and $k=9$, all nodes in the $H$ graph precede a pivot node, so $H(10, 9)=Y(10, 9)$. For general $g$ and $k$, one removes all nodes in $H(g, k)$ that do note precede pivot nodes that are not themselves pivot nodes to obtain $Y(g, k)$, as described in Definition \ref{def:ygraph}.
\begin{figure}[ht]
\centering{
\includegraphics[scale=0.8]{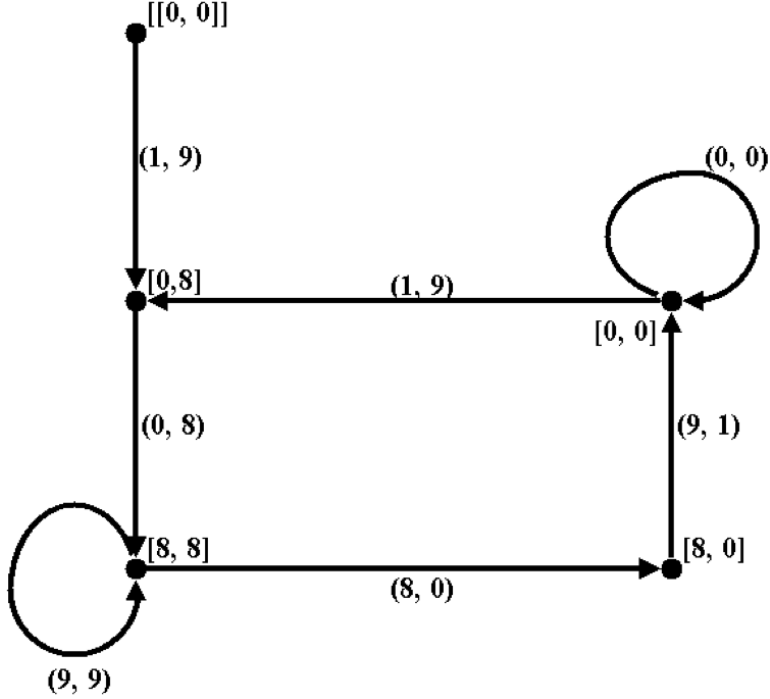}
\caption{$Y(10, 9)$.  The nodes $[0, 0]$ and $[8, 8]$ are both even and odd pivot nodes.}\label{fig:y109}}
\end{figure}

\begin{figure}[ht]
\centering
\includegraphics[scale=0.5]{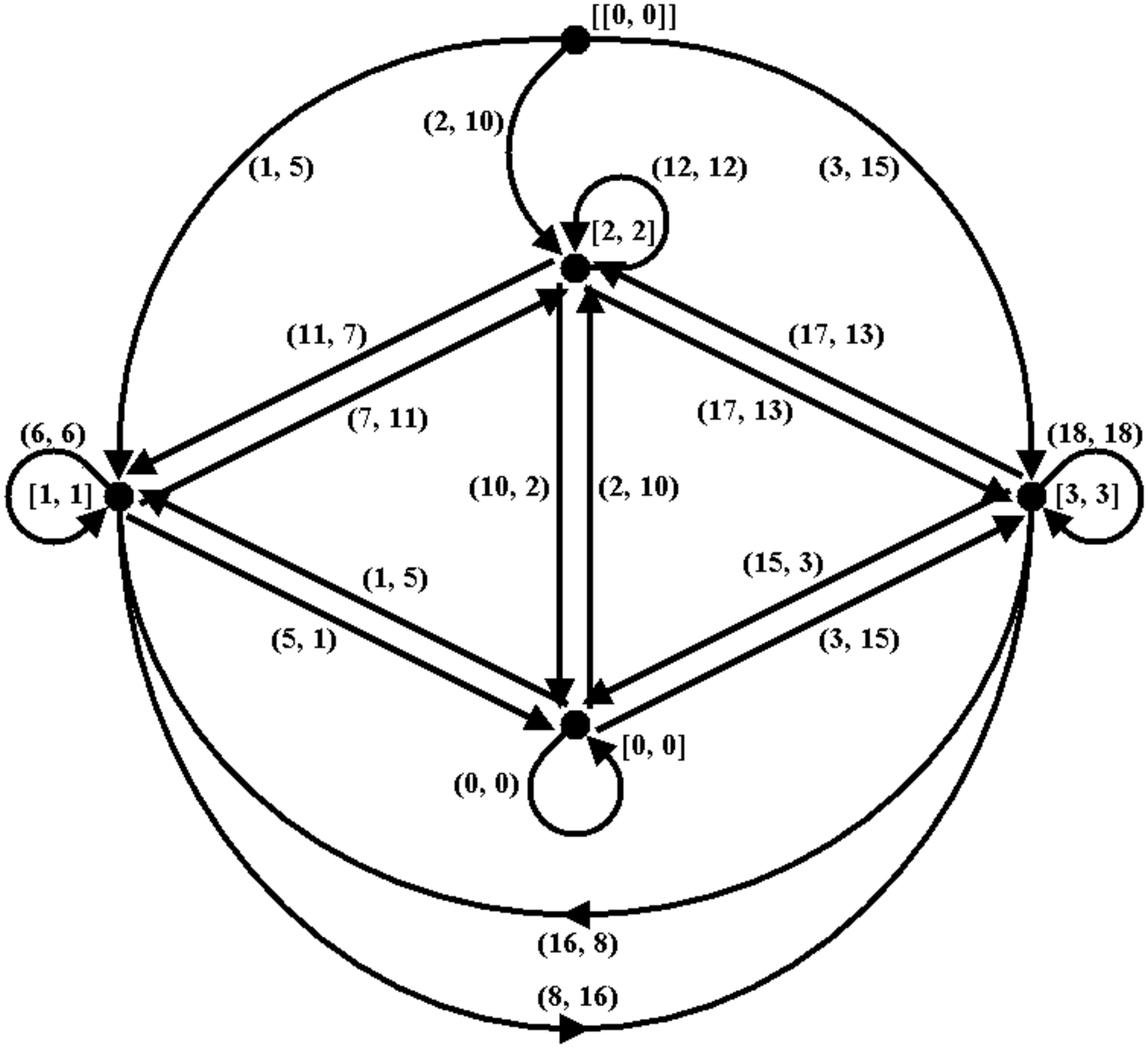}
\caption{$Y(19, 4)$.  All nodes except the starting node $[[0, 0]]$ are both even and odd pivot nodes.}\label{fig:y194}
\end{figure}

For an illustration of Young's theorem (Theorem \ref{thm:youngsthm}) for even pivot nodes, consider $Y(19, 4)$, shown in Figure \ref{fig:y194}.  The node $[2, 2]$ is an even pivot node (by Definition \ref{def:pivnodes}), and there is a path to $[2, 2]$ from the starting node that consists of the edge labeled $(2, 10)$. By Young's theorem, the number $(2, 10)_{19}$ should be a $(19, 4)$ reverse multiple, and, in fact, computation verifies that $4\cdot(2, 10)_{19}=(10, 2)_{19}$.  There is also a path from the starting node to $[2, 2]$ that consists of the edges labeled $(1, 5)$, $(8, 16)$, and $(17, 13)$, in that order; here, Young's theorem tells us that $(1, 8, 17, 13, 16, 5)_{19}$ should be a $(19, 4)$ reverse multiple, and computation will verify that $4\cdot(1, 8, 17, 13, 16, 5)_{19}=(5, 16, 13, 17, 8, 1)_{19}$.  Beyond the fact that reverse multiples may be read off the Young graph in such a way, Young's theorem also states that such reverse multiples constitute \textit{all} reverse multiples.

For an illustration of Young's theorem for odd pivot nodes, consider $Y(14, 3)$, shown in Figure \ref{fig:y143}. The node $[1, 0]$ is an odd pivot node, as it directly precedes $[0, 1]$. There is a path from the starting node to $[1, 0]$ that consists of the edges labeled $(1, 5)$ and $(10, 3)$; furthermore, $[0, 1]$ directly precedes $[1, 0]$ by the edge $(7, 7)$. Therefore, by Young's theorem, the number $(1, 10, 7, 3, 5)_{14}$ should be a $(14, 3)$ reverse multiple, and computation confirms that this is true.
\begin{figure}[ht]
\centering
\includegraphics[scale=0.5]{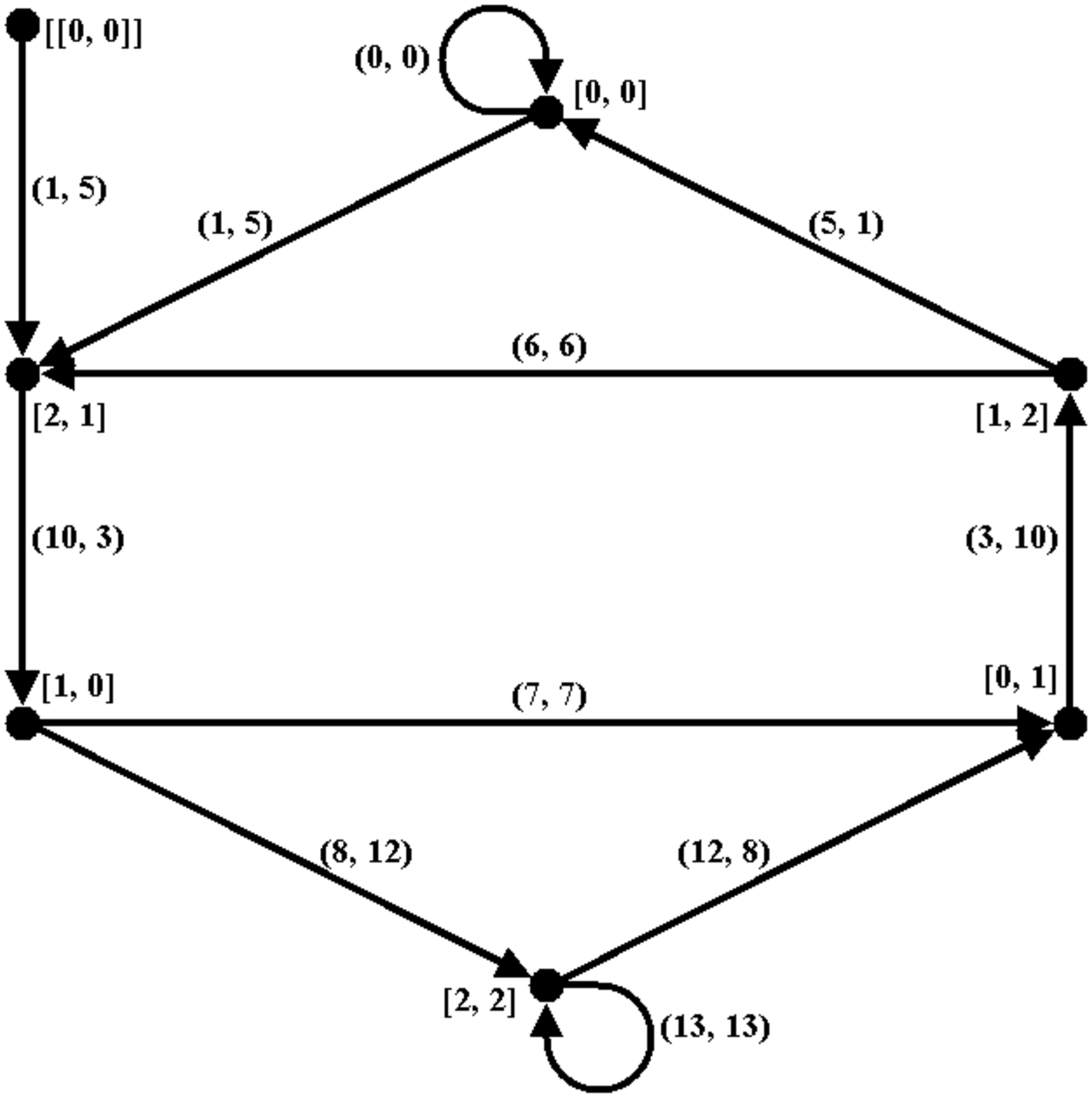}
\caption{$Y(14, 3)$. The nodes $[1, 0]$, $[2, 1]$, $[2, 2]$ and $[0, 0]$ are odd pivot nodes, and $[2, 2]$ and $[0, 0]$ are also even pivot nodes.}\label{fig:y143}
\end{figure}

Thus, more informally, the essence of Young's theorem is that, given a path from the starting node to an even pivot node, one can construct a reverse multiple with an even number of digits from left to right by first reading off the left components of the edge labels in the order they appear as one goes from the starting node to the pivot node, and then reading off the right components of the edge labels in the order they appear as one goes from the pivot node back to the starting node; and that, given a path from the starting node to an odd pivot node, the same procedure gives a reverse multiple with an odd number of digits, after one inserts in the middle of the constructed number the single value that appears in the label for the edge from the odd pivot node to its reverse (e.g., the number $7$ from the edge between $[1, 0]$ and $[0, 1]$ in $Y(14, 3)$, shown in Figure \ref{fig:y143}).

In this way, we may read off all reverse multiples for given $g$ and $k$. For $g=10$ and $k=9$ (see Figure \ref{fig:y109}), this gives the list $1089$, $10989$, $109989$, $1099989$, $10891089$, $10999989$, $108901089\ldots$, which is entry A001232 in the On-Line Encyclopedia of Integer Sequences (OEIS) \cite{oeis}. The other nontrivial base $10$ case, that is, $g=10$ and $k=4$, gives entry A008918, and any of the graphs discussed or shown in this paper will give a similar sequence; Figure \ref{fig:y194} gives the (base $10$) sequence $24$, $48$, $72$, $480$, $960$, $1440\ldots$, and Figure \ref{fig:y143} gives $67275$, $946335$, $13253175$, $185548935$, $2584503675$, $2597689575\ldots$ (neither are listed in the OEIS \cite{oeis}).

\section{1089 Graphs}\label{sec:1089graphs}

In this section and the two following it we present our results.  This section and Section \ref{sec:completegraphs} both prove conjectures of Sloane (Theorems \ref{thm:1089graph} and \ref{thm:completegraph}), while Section \ref{sec:cyclicgraphs} discusses another type of graph mentioned in his paper.  Section \ref{sec:completegraphs} also contains a result (Theorem \ref{thm:0iso}) on Young graph isomorphism and a result (Corollary \ref{cor:predsof0}) on the nodes adjacent to $[0, 0]$.

The results of this section and of Sections \ref{sec:completegraphs} and \ref{sec:cyclicgraphs} mostly concern isomorphism of Young graphs, which is a somewhat stronger condition than isomorphism in ordinary directed graphs because of the added structure of node labels in Young graphs.  The concept was introduced by Sloane.

\begin{definition}[Isomorphism]\label{def:iso}
Two Young graphs $Y$ and $Y'$ are \textit{isomorphic as Young graphs}, written $Y \simeq Y'$, if there is a function $\phi$ from the nodes of $Y$ to those of $Y'$ such that, if $G$ and $G'$ are respectively the directed graphs\footnote{These are referred to as ``underlying directed graphs'' by Sloane.} obtained by removing the edge and node labels from $Y$ and $Y'$, $\phi$ acts as an isomorphism of $G$ and $G'$, and furthermore such that $x$ is an even pivot node in $Y$ if and only if $\phi(x)$ is an even pivot node in $Y'$, and $x$ is an odd pivot node in $Y$ if and only if $\phi(x)$ is an odd pivot node in $Y'$ (that is, $\phi$ is a pivot node-preserving isomorphism of the underlying directed graphs).
\end{definition}
\begin{remark}\label{rem:isocarry}
Holt classifies reverse multiples based on the ``structure of their carries,'' ``carries'' being the numbers $r_i$.  Since it is through the labels $[r_{n-1-i}, r_{i-1}]$ that Young graph isomorphism is determined, studying classes of reverse multiples with similar carry structure should be similar to studying reverse multiples with isomorphic Young graphs; indeed, this relation is seen in the correspondence between Holt's study of symmetric and shifted-symmetric reverse multiples and the material of Sections \ref{sec:1089graphs} and \ref{sec:completegraphs}, as will become evident throughout this paper.
\end{remark}

For an example of isomorphic graphs, see Figures \ref{fig:y109} and \ref{fig:1089graph}. It is noted in Section 3.2 of Sloane's paper that Young graphs isomorphic to $Y(10, 9)$ (Figure \ref{fig:y109}) are fairly common, for example $Y(10, 4)$, $Y(15, 4)$, and $Y(20, 9)$; the following definition is then made.

\begin{definition}[1089 Graph]\label{def:1089graph}
A Young graph $Y$ such that $Y\simeq Y(10, 9)$ is called a \textit{1089 graph}.
\end{definition}

The number $1089$ is the smallest $(10, 9)$ reverse multiple; thus the name ``1089 graph.''  Sloane's Conjecture 3.1 characterizes the occurrences of 1089 graphs; we prove this conjecture in Theorem \ref{thm:1089graph}.  The work of this section is also related to Holt's \cite{holtS} work on 1089 palintiples.  We require several preliminary results.

\begin{lemma}\label{lem:edgereverse}
If a Young graph contains a node $[r, s]$ directly preceding a node $[t, u]$ by an edge $(a, b)$ then it also contains the node $[u, t]$ directly preceding the node $[s, r]$ by the edge $(b, a)$.
\end{lemma}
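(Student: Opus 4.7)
The plan is to split the proof into an algebraic verification that the swapped edge satisfies the defining equations (\ref{eq:eqnpair}) and a structural argument that its endpoints in fact lie in the Young graph.

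For the algebraic half, the pair (\ref{eq:eqnpair}) applied to the given edge $(a, b)\colon [r, s] \to [t, u]$ reads
\[
kb + s = a + ug \qquad \text{and} \qquad ka + t = b + rg.
\]
Writing the analogous pair for the candidate edge $(b, a)\colon [u, t] \to [s, r]$ via the same template simply interchanges these two equations, so they are automatically satisfied. The digit bounds $0 \le a, b \le g - 1$ and the carry bounds $0 \le r, s, t, u \le k - 1$ are symmetric in each pair of variables, so they transfer without modification, and the swapped edge is a legitimate edge of $H(g, k)$ provided its endpoints are nodes there.

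For the structural half I would use the coordinate-swap involution $\sigma\colon [x, y] \mapsto [y, x]$, understood to identify the starting node $[[0, 0]]$ with the ordinary node $[0, 0]$. Every node of $Y(g, k)$ lies on a path from $[[0, 0]]$ to some pivot $P$, so after fixing such paths $[[0, 0]] \to \cdots \to [r, s]$ and $[r, s] \to [t, u] \to \cdots \to P$, applying the edge swap one edge at a time yields a chain of algebraically valid edges $\sigma(P) \to \cdots \to [u, t] \to [s, r] \to \cdots \to [0, 0]$. The node $\sigma(P)$ already lies in $Y$—equal to $P$ when $P$ is an even pivot, and equal to the direct successor of $P$ when $P$ is an odd pivot by Definition \ref{def:pivnodes}—so reachability from $[[0, 0]]$ propagates along the chain to $[u, t]$ and $[s, r]$, while the terminal node $[0, 0]$ is itself an even pivot, giving the required pivot-successor condition for both.

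The main obstacle is the bookkeeping forced by the asymmetry between the starting node $[[0, 0]]$ and the ordinary node $[0, 0]$: $\sigma$ identifies them, but only the ordinary $[0, 0]$ admits incoming edges in $H(g, k)$. Explicitly verifying that the swapped chain legitimately terminates at the ordinary $[0, 0]$, rather than at the starting node, is the subtle step; once this is handled, the remainder of the argument is formal.
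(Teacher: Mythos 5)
Your proof is correct. Note that the paper itself omits a proof of this lemma, deferring to property P3 of Sloane and Theorem 2 of Young's second paper, so there is no internal argument to compare against; your two-part decomposition is essentially the standard one. The algebraic half is exactly right: writing (\ref{eq:eqnpair}) for the candidate edge $(b,a)\colon [u,t]\to[s,r]$ literally permutes the two equations for $(a,b)\colon [r,s]\to[t,u]$, and the digit and carry bounds are position-independent, so validity in $H(g,k)$ reduces to the endpoints being reachable. Your structural half also holds up: reversing a path $[[0,0]]\to\cdots\to[r,s]\to[t,u]\to\cdots\to P$ edge by edge produces a chain of valid edges from $\sigma(P)$ down to the ordinary node $[0,0]$; since $\sigma(P)$ is already a node of $Y$ (it equals $P$ for an even pivot, and is the target of the defining edge $P\to\sigma(P)$ for an odd pivot per Definition \ref{def:pivnodes}), reachability in $H(g,k)$ propagates forward along the chain, and every node on it precedes the even pivot $[0,0]$, so all of them --- in particular $[u,t]$ and $[s,r]$ --- survive the truncation in Definition \ref{def:ygraph}, and the edge between retained nodes is retained. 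You correctly identify the one delicate point, namely that the swap of the initial edge out of $[[0,0]]$ must land at the ordinary node $[0,0]$ rather than the starting node; this is consistent because the governing equations for the two are identical and the no-zero-label restriction applies only to edges leaving the starting node, which is a weaker condition than the one the swapped edge must satisfy. One cosmetic remark: asserting that $\sigma(P)$ ``already lies in $Y$'' is slightly more than you need at that stage --- reachability in $H(g,k)$ suffices to start the propagation, with membership in $Y$ then following from the chain into $[0,0]$ --- but as stated it is still true, so this is not a gap.
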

We omit the proof; the interested reader is directed to property P3 in Section 2.7 of Sloane or Theorem 2 of Young \cite{youngTrees}.

\begin{corollary}\label{cor:precede0}
Every node in a Young graph is a predecessor of $[0, 0]$.
\end{corollary}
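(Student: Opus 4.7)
The plan is to reduce to pivot nodes and then reverse the path promised by Young's theorem. Since every node of $Y(g, k)$ either is a pivot node or is a predecessor of some pivot node (by Definition \ref{def:ygraph}), it suffices to show that every pivot node $p$ is a predecessor of $[0, 0]$.

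By Theorem \ref{thm:youngsthm}, there is a path $[[0, 0]] \to v_1 \to v_2 \to \cdots \to v_m = p$ in $Y(g, k)$. Writing $\overline{[r, s]} = [s, r]$, I apply Lemma \ref{lem:edgereverse} to each interior edge $v_i \to v_{i+1}$ (for $1 \le i < m$) to obtain an edge $\overline{v_{i+1}} \to \overline{v_i}$, producing a reversed path $\overline{v_m} \to \overline{v_{m-1}} \to \cdots \to \overline{v_1}$. If $p$ is an even pivot then $\overline{v_m} = \overline{p} = p$ and the reversed path already starts at $p$; if $p$ is an odd pivot, then by Definition \ref{def:pivnodes} there is an edge $p \to \overline{p} = \overline{v_m}$, which I prepend.

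The main obstacle is supplying the final edge $\overline{v_1} \to [0, 0]$: Lemma \ref{lem:edgereverse} does not directly apply to the edge leaving the starting node, since $[[0, 0]]$ and $[0, 0]$ are by convention distinct. Writing $v_1 = [r_{n-2}, r_0]$ with edge label $(a_{n-1}, a_0)$, the equations $ka_0 = a_{n-1} + r_0 g$ and $ka_{n-1} + r_{n-2} = a_0$ that generated this edge (namely (\ref{eq:eqnpair}) at $i = 0$) are, after swapping the order of the two lines, precisely the instance of (\ref{eq:eqnpair}) at $j = n-1$ that would generate an edge from $\overline{v_1} = [r_0, r_{n-2}]$ to $[r_{-1}, r_{n-1}] = [0, 0]$ with label $(a_{n-1-j}, a_j) = (a_0, a_{n-1})$; the range constraints $0 \le a_i \le g-1$ and $0 \le r_i \le k-1$ are automatically inherited from the original edge. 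Thus the desired edge exists in $H(g, k)$, and since $[0, 0]$ is always an even pivot node (being of the form $[a, a]$), this edge lies in $Y(g, k)$ as well.

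Concatenating the optional edge $p \to \overline{p}$, the reversed path, and the final edge $\overline{v_1} \to [0, 0]$ yields a path from $p$ to $[0, 0]$ for every pivot node $p$, completing the proof. The delicate point to watch is the boundary case at the starting node, which must be handled by direct inspection of (\ref{eq:eqnpair}); the remainder is a mechanical application of Lemma \ref{lem:edgereverse} together with Definitions \ref{def:pivnodes} and \ref{def:ygraph}.
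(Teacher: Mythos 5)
Your proof is correct and follows essentially the same route as the paper's one-line argument: reverse a path from the starting node using Lemma \ref{lem:edgereverse} together with the fact that every node succeeds the starting node and precedes a pivot node; you simply make explicit the boundary case of the edge leaving $[[0, 0]]$, which the paper leaves to the reader (deferring to Sloane's property P3). One small nitpick: the existence of a path from $[[0, 0]]$ to the pivot node $p$ follows from the construction of $H(g, k)$ in Subsection \ref{subsec:hgraph} (every node is generated by exploring successors of the starting node), not from Theorem \ref{thm:youngsthm}, which only asserts the correspondence between such paths and reverse multiples.
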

\begin{proof}
This is a direct consequence of Lemma \ref{lem:edgereverse} and the fact that all nodes in a Young graph precede some pivot node and succeed the starting node.
\end{proof}

Two more lemmas are needed to prove the main theorem; we now prove the first.
\begin{lemma}\label{lem:1089suc}
Let $g=b(k+1)$, with $b$ a positive integer, and let $m$ be a nonnegative integer.  Then, if the $(g, k)$ Young graph contains a node of the form $\left[\frac{s(k-1)}{b^m}, \frac{t(k-1)}{b^{m+1}}\right]$, where $s$ and $t$ are integers, the following are true:

\begin{enumerate}[label=(\roman*)]
\item\label{itm:edges}
The edges leading from $\left[\frac{s(k-1)}{b^m}, \frac{t(k-1)}{b^{m+1}}\right]$ have the form $\left(\frac{s(k-1)}{b^{m-1}}+u, \frac{(t-b^2s)(k-1)}{b^{m+1}}+ku\right)$, where $u$ is some integer.
\item\label{itm:dsuc} The edge $\left(\frac{s(k-1)}{b^{m-1}}+u, \frac{(t-b^2s)(k-1)}{b^{m+1}}+ku\right)$ leading from the node $\left[\frac{s(k-1)}{b^m}, \frac{t(k-1)}{b^{m+1}}\right]$ terminates at the node $\left[\frac{t(k-1)}{b^{m+1}}, \frac{(t-b^2s+b^{m+1}u)(k-1)}{b^{m+2}}\right]$.
\item\label{itm:t0} $t\equiv 0 \pmod{b}$.
\end{enumerate}
\label{lem:1089graph1}
\end{lemma}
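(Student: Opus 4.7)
The strategy for parts (i) and (ii) is direct algebraic substitution. Writing the source node as $[R,S]$ with $R = s(k-1)/b^m$ and $S = t(k-1)/b^{m+1}$, and an outgoing edge labeled $(A,B)$ terminating at $[R', S']$, equation (\ref{eq:eqnpair}) gives
\[
R' = B + Rg - kA, \qquad S' = (kB + S - A)/g.
\]
Substituting $g = b(k+1)$ and the explicit forms of $R$ and $S$, I would first compute $Rg - S = (k-1)(b^2 s(k+1) - t)/b^{m+1}$ and observe that $(A, B)$ satisfies the identity $kA - B = Rg - S$ exactly when it takes the form $A = s(k-1)/b^{m-1} + u$, $B = (t - b^2 s)(k-1)/b^{m+1} + ku$ for some integer $u$; this is (i). The identity $kA - B = Rg - S$ also makes the equation for $R'$ reduce to $R' = S = t(k-1)/b^{m+1}$, giving the left coordinate in (ii). Substituting the parametrized $A$ and $B$ into the equation for $S'$ and simplifying (using $(k-1)(k+1) = k^2 - 1$ and the telescoping of $b$-powers) yields $S' = (k-1)(t - b^2 s + b^{m+1} u)/b^{m+2}$, completing (ii).

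The proof of (iii) rests on a single observation about (ii): the map $t \mapsto t' = t - b^2 s + b^{m+1} u$ collapses to $t' \equiv t \pmod{b}$ whenever $m \geq 0$, since both $b^2 s$ and $b^{m+1} u$ are divisible by $b$. Hence the residue of $t$ modulo $b$ is invariant along any edge connecting two nodes of the form in the lemma. By Corollary \ref{cor:precede0}, the node $\left[s(k-1)/b^m,\, t(k-1)/b^{m+1}\right]$ precedes $[0,0]$, so I may fix a directed path from it to $[0,0]$; by (i) and (ii), every node along this path admits a representation in the parameterized family (with $m$ increasing by one per edge). At the terminal node $[0,0]$ any representation of the required form forces $s = t = 0$, so $b \mid 0$ trivially, and propagating the mod-$b$ invariance backwards along the path yields $b \mid t$ at the original node.

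The principal obstacle is the algebraic bookkeeping in (ii), where several powers of $b$ and the factorization $k^2 - 1 = (k-1)(k+1)$ have to be tracked carefully through the simplification. Part (iii) is then short once the mod-$b$ invariance is spotted; the conceptual point is that the integer parameter $u$ contributes only a multiple of $b$ to $t'$, so the residue of $t$ is preserved regardless of which outgoing edge is chosen, and Corollary \ref{cor:precede0} then supplies a path terminating at the one node where $t = 0$ is forced.
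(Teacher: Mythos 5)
There is a genuine gap at the foundation of your argument for (i), and it sits exactly where the lemma's real content lies. Writing the source node as $[R,S]$ and an outgoing edge as $(A,B)$, equation (\ref{eq:eqnpair}) gives $kA - B = Rg - R'$, where $R' = B + Rg - kA$ is the first coordinate of the \emph{target} node; your ``identity'' $kA - B = Rg - S$ is therefore equivalent to the assertion $R' = S$, which is precisely the first-coordinate claim of conclusion (ii) --- one of the things to be proved. You never justify it: you use it to parametrize the edges (your (i)) and then ``deduce'' $R' = S$ from it, which is circular. Nor is it a formal consequence of the edge equations together with the shape of the node label: the defining constraints on an edge are only that $R' = B + Rg - kA$ and $S' = (kB + S - A)/g$ be integers in $[0, k-1]$, and in general Young graphs $R' \neq S$ (e.g., the edge $[1,1]\longrightarrow[2,2]$ in the complete graph $Y(19,4)$). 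Since your derivation of the identity uses $g = b(k+1)$ only to simplify $Rg - S$, it would, if valid, prove $R' = S$ in every Young graph, which is false.

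The missing step is the pinning-down argument that occupies most of the paper's proof: eliminating $A$ from the two equations of (\ref{eq:eqnpair}) yields $R' + (k^2-1)B + \frac{tk(k-1)}{b^{m+1}} = g\left(\frac{s(k-1)}{b^m} + kS'\right)$; the bound $0 \le R' \le k-1$ then confines $b^{m+2}\left(\frac{s(k-1)}{b^m} + kS'\right)$, which is a multiple of $b^{m+1}$, to an interval of length $b^{m+1} - \frac{2b^{m+1}}{k+1} < b^{m+1}$, and $b^{m+1}(k-1)B + t(k-1)$ is checked to be a multiple of $b^{m+1}$ lying in that interval, forcing the two to coincide. That equality is what delivers your identity, and it is the only place the hypothesis $k+1 \mid g$ does real work. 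Your algebra from (i) to (ii), and your mod-$b$ invariance argument for (iii) via Corollary \ref{cor:precede0}, agree with the paper and are sound once the identity is actually established.
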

\begin{proof}
First note that $0 \le s \le b^m$ and $0 \le t  \le b^{m+1}$, because $[s(k-1)/b^m, t(k-1)/b^{m+1}]$ is in the Young graph and $0\le r_i \le k-1$ for all $i$.
Throughout the following proof let $a=a_i$, $A=a_{n-1-i}$, $r=r_i$, and $R=r_{n-2-i}$, where $i$ is the number such that $[s(k-1)/b^m, t(k-1)/b^{m+1}]=[r_{i-1}, r_{n-1-i}]$.  We will establish the equation $rk=\frac{(b^{m+1}a+t)(k-1)}{b^{m+2}}-\frac{s(k-1)}{b^m}$, whence we directly relate $a$ and $A$ and the desired conclusions \ref{itm:edges}, \ref{itm:dsuc}, and \ref{itm:t0} follow easily.

By equation (\ref{eq:eqnpair}), $A=ka+\frac{t(k-1)}{b^{m+1}}-rg$ and $kA+R=a+\frac{s(k-1)}{b^m}g$.
These two equations imply $R+(k^2-1)a+\frac{tk(k-1)}{b^{m+1}}=g\left(\frac{s(k-1)}{b^m}+rk\right)$.
Because $0\le R \le k-1$, we now have
$$(k^2-1)a+\frac{tk(k-1)}{b^{m+1}} \le g\left(\frac{s(k-1)}{b^m}+rk\right) \le (k^2-1)a+\frac{tk(k-1)}{b^{m+1}}+k-1.$$
Since $g=b(k+1)$, multiplying this inequality by $b^{m+1}/(k+1)$ yields
\begin{equation}\label{ineq:1089bds1}
\begin{array}{rcl}
b^{m+2}\left(\frac{s(k-1)}{b^m}+rk\right) & \ge & b^{m+1}(k-1)a+tk-2t+\frac{2t}{k+1} \\
b^{m+2}\left(\frac{s(k-1)}{b^m}+rk\right) & \le & b^{m+1}(k-1)a+tk-2t+\frac{2t}{k+1}+b^{m+1}-\frac{2b^{m+1}}{k+1}.
\end{array}
\end{equation}

The inequalities in (\ref{ineq:1089bds1}) give lower and upper bounds on $b^{m+2}\left(\frac{s(k-1)}{b^m}+rk\right)$.  Note that the difference between these bounds is $b^{m+1}-\frac{2b^{m+1}}{k+1}<b^{m+1}$, and that $b^{m+2}\left(\frac{s(k-1)}{b^m}+rk\right)$ is a multiple of $b^{m+1}$.  This implies that $b^{m+2}\left(\frac{s(k-1)}{b^m}+rk\right)$ is the only multiple of $b^{m+1}$ satisfying the bounds in (\ref{ineq:1089bds1}).  Now, because $k\ge 2$ and $0\le t\le b^{m+1}$, it may be simply shown that $b^{m+1}(k-1)a+tk-t$ satisfies the bounds in (\ref{ineq:1089bds1}), and because $\frac{t(k-1)}{b^{m+1}}$ is an integer (it is a node label) $b^{m+1}(k-1)a+tk-t$ must be a multiple of $b^{m+1}$.  This establishes $b^{m+1}(k-1)a+tk-t=b^{m+2}\left(\frac{s(k-1)}{b^m}+rk\right)$.

It now follows that $b^{m+2}\left(\frac{s(k-1)}{b^m}+rk\right)=b^{m+1}(k-1)a+tk-t,$ which implies
\begin{equation}\label{eq:1089eq1}
rk=\frac{(b^{m+1}a+t)(k-1)}{b^{m+2}}-\frac{s(k-1)}{b^m}.
\end{equation}

Equation (\ref{eq:eqnpair}) implies $kA+rkg=k^2a+\frac{kt(k-1)}{b^{m+1}}$.  Substituting for $rk$ via equation (\ref{eq:1089eq1}) and substituting $b(k+1)$ for $g$, we obtain

\begin{equation}\label{eq:1089eq2}
a=kA+\frac{t(k-1)}{b^{m+1}}-\frac{s(k^2-1)}{b^{m-1}}.
\end{equation}

Let $u=A-\frac{s(k-1)}{b^{m-1}}\in\mathbb{Z}$.  Substituting $u$ into equation (\ref{eq:1089eq2}) yields $a=\frac{(t-b^2s)(k-1)}{b^{m+1}}+ku$, establishing \ref{itm:edges}.  Now equation (\ref{eq:eqnpair}) gives $A+rg = ka+\frac{t(k-1)}{b^{m+1}}$ and $a+\frac{s(k-1)}{b^m}g = kA+R$, and
substituting in for $A$ and $a$ in terms of the parameter $u$ and for $g$ in terms of $b$ and $k$ yields \ref{itm:dsuc}.

Note that the new node $[t(k-1)/b^{m+1}, (t-b^2s+b^{m+1}u)(k-1)/b^{m+2}]$ has the same form as the directly preceding node $[s(k-1)/b^m, t(k-1)/b^{m+1}]$.  Thus all successors of the node $[s(k-1)/b^m, t(k-1)/b^{m+1}]$ have the form $[s'(k-1)/b^{m'}, t'(k-1)/b^{m'+1}]$.
Additionally, we have $t\equiv {t-b^2s+b^{m+1}u} \pmod{b}$, so that $t\equiv s'\equiv t'\pmod{b}$, inductively.
Now, if $[s(k-1)/b^m, t(k-1)/b^{m+1}]$ is in $Y(g, k)$, it must precede $[0, 0]$ by Corollary \ref{cor:precede0}, so there must be some $t'=0$, which implies \ref{itm:t0}.
\end{proof}

One further lemma is required before we may characterize all occurrences of the 1089 graph.
\begin{lemma}\label{lem:1089graph2}
If, in $Y(g, k)$, a node $[r, s]$ directly precedes $[0, 0]$ and is not an odd pivot node then $k+1 \mid g$.
\end{lemma}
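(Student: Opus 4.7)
The plan is to argue the contrapositive in disguise: starting from the single edge $[r, s] \to [0, 0]$, I will produce, whenever $k+1 \nmid g$, an explicit edge from $[r, s]$ to $[s, r]$ that turns $[r, s]$ into an odd pivot node. Since the hypothesis forbids that, the construction must fail, and the only failure mode will force $k+1 \mid g$.

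First, I set up the equations. Let $(A, a)$ denote the label of the edge $[r, s] \to [0, 0]$, so $A = a_{n-1-i}$ and $a = a_i$ for some index $i$. Substituting $r_{i-1} = s$, $r_{n-1-i} = r$, and $r_i = r_{n-2-i} = 0$ into equation (\ref{eq:eqnpair}) yields the pair $A = ka + s$ and $kA = a + rg$; eliminating $A$ produces the key identity $(k^2 - 1)a = rg - ks$, which I will call $(\ast)$. The degenerate case $r = 0$ falls out immediately: the left side of $(\ast)$ is nonnegative while the right is $-ks \le 0$, forcing $s = 0$; but then $[r, s] = [0, 0]$ is its own direct successor via this very edge, so by Definition \ref{def:pivnodes} it is an odd pivot node, contradicting the hypothesis. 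Thus $r \ge 1$.

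Next I build the candidate odd-pivot edge. Reducing $(\ast)$ modulo $k - 1$ gives $k - 1 \mid rg - s$, so $a' := (rg - s)/(k - 1)$ is a positive integer (positive because $r \ge 1$ and $s \le k - 1 < g$ give $rg > s$). By the remark following Definition \ref{def:pivnodes}, $a'$ is the only common digit an edge $[r, s] \to [s, r]$ could carry, and by the construction in Subsection \ref{subsec:hgraph} such an edge actually belongs to $H(g, k)$ precisely when $a' \le g - 1$. A direct estimate from $r \le k - 1$ and $s \ge 0$ shows $a' \le g$, with equality iff $(r, s) = (k - 1, 0)$. So if $(r, s) \ne (k - 1, 0)$ the edge exists, $[r, s]$ is an odd pivot node, and the hypothesis is contradicted. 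Hence $(r, s) = (k - 1, 0)$, and substituting back into $(\ast)$ gives $(k + 1)a = g$, so $k + 1 \mid g$.

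The crux is the bounding in the previous paragraph: I must pin down that the \emph{only} way the inequality $a' \le g - 1$ can fail is at the single pair $(r, s) = (k - 1, 0)$, which via $(\ast)$ already encodes the desired conclusion $(k + 1) \mid g$. The one point requiring care is the interpretation of ``odd pivot node'' when invoking the existence of the edge in $H(g, k)$---I am reading Definition \ref{def:pivnodes} so that an $H$-level edge $[r, s] \to [s, r]$ suffices to designate $[r, s]$ an odd pivot; all other steps are routine substitutions into equation (\ref{eq:eqnpair}).
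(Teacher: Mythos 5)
Your proof is correct, and it shares the paper's central idea --- both arguments pivot on the observation that the only candidate label for an edge $[r,s]\to[s,r]$ is $x=(rg-s)/(k-1)$, and that the non-pivot hypothesis forces this candidate to be an invalid digit --- but the two proofs close the argument differently. The paper applies Lemma \ref{lem:edgereverse} to get the reversed edge $(a,b)$ from $[0,0]$ to $[s,r]$, identifies the candidate label as $a+b$, concludes $a+b\ge g$ from its invalidity, and then invokes Sloane's Theorem 3.4 to convert that inequality into $k+1\mid g$. You instead work with the forward edge $[r,s]\to[0,0]$, derive $(k^2-1)a=rg-ks$, and use the a priori bound $r\le k-1$ to show the candidate satisfies $x\le g$ with equality only at $(r,s)=(k-1,0)$; the non-pivot hypothesis then forces exactly that pair, and the relation $(k^2-1)a=(k-1)g$ hands you $g=(k+1)a$ directly. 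The payoff of your route is self-containment --- no appeal to Sloane's Theorem 3.4 and no need for the edge-reversal lemma --- plus the slightly sharper conclusions that the offending predecessor must be $[k-1,0]$ and that its incoming digit $a$ equals $g/(k+1)$, which is consistent with the labels in Figure \ref{fig:1089graph}. Your reading of Definition \ref{def:pivnodes} at the $H$-graph level is the intended one (pivot nodes are determined in $H(g,k)$ before truncation to $Y(g,k)$), and in any case both $[r,s]$ and $[s,r]$ survive into $Y(g,k)$ here, so the step is sound either way.
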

\begin{proof}
By Lemma \ref{lem:edgereverse}, $[0, 0]$ must directly precede $[s, r]$; let the edge from $[0, 0]$ to $[s, r]$ be labeled $(a, b)$.  We then have $kb=a+rg$ and $ka+s=b$, by equation (\ref{eq:eqnpair}); adding these equations and rearranging yields $(k-1)(a+b)=rg-s$.

It is also known that there can be no edge from $[r, s]$ to $[s, r]$.  If we attempt to find such an edge by solving equation (\ref{eq:eqnpair}) for the edge $(x, y)$ from $[r, s]$ to $[s, r]$, we find $x=y=(rg-s)/(k-1)=a+b$, and so $a+b$ must be an unsatisfactory edge label.  It is only required of edge labels that they be integers between 0 and $g-1$ inclusive; $a+b$ is a nonnegative integer, so $a+b \ge g$.  Now by Sloane's Theorem 3.4, we have $k+1\mid g$.
\end{proof}

We may now prove the main theorem of this section.
\begin{theorem}[Conjecture 3.1 of Sloane]\label{thm:1089graph}
$Y(g, k)$ is a 1089 graph if and only if $k+1\mid g$.  Also, if $Y(g, k)$ is a 1089 graph, with $g=b(k+1)$, its labels are those shown in Figure \ref{fig:1089graph}.

\begin{figure}[ht]
\centering
\includegraphics[scale=0.7]{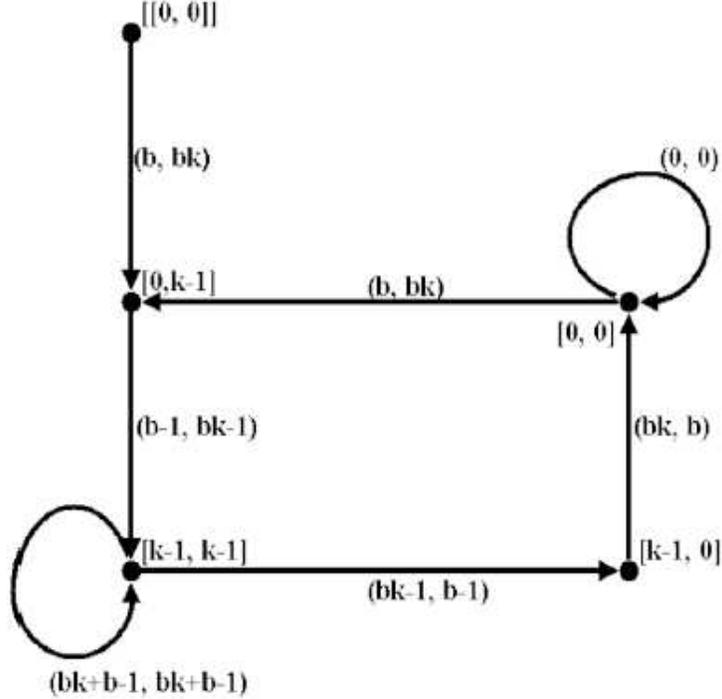}
\caption{$Y(b(k+1), k)$}\label{fig:1089graph}
\end{figure}
\end{theorem}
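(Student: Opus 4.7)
The proof divides into the two implications of the biconditional.

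For the $(\Leftarrow)$ direction, assume $g = b(k+1)$ with $b$ a positive integer. I plan to construct $Y(g, k)$ node-by-node by iterating Lemma \ref{lem:1089graph1}, starting at $[[0, 0]]$ with parameters $(s, t, m) = (0, 0, 0)$ and following the $(s, t, m)$-evolution of Lemma \ref{lem:1089graph1}\ref{itm:dsuc}. At each node, Lemma \ref{lem:1089graph1}\ref{itm:edges} enumerates candidate outgoing edges as a family parametrized by an integer $u$, and one identifies those values of $u$ giving live successors. The main chain of parameter triples is
\begin{equation*}
(0, 0, 0) \to (0, b^2, 1) \to (b^2, b^3, 2) \to (b^3, 0, 3) \to (0, 0, 4),
\end{equation*}
visiting in order $[[0, 0]], [0, k-1], [k-1, k-1], [k-1, 0]$, and $[0, 0]$, via edges labeled $(b, kb), (b-1, kb-1), (kb-1, b-1)$, and $(kb, b)$. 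Further applications of Lemma \ref{lem:1089graph1}\ref{itm:edges} yield the self-loop $(g-1, g-1)$ at $[k-1, k-1]$ (from $u = 2b - 1$), the self-loop $(0, 0)$ at $[0, 0]$ (from $u = 0$), and the edge $(b, kb)$ closing back to $[0, k-1]$ (from $u = b$). Since $[k-1, k-1]$ and $[0, 0]$ are the only nodes of the form $[a, a]$ and each carries a self-loop, by Definition \ref{def:pivnodes} they are both even and odd pivots, matching the pivot structure of $Y(10, 9)$; the resulting underlying directed graph (four non-starting nodes in a $4$-cycle with two self-loops) agrees with that of $Y(10, 9)$, so $Y(g, k)$ is a 1089 graph with the labels of Figure \ref{fig:1089graph}.

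For the $(\Rightarrow)$ direction, assume $Y(g, k)$ is a 1089 graph and let $\phi \colon Y(10, 9) \to Y(g, k)$ be a pivot-preserving Young-graph isomorphism. The node $[0, 0]$ of $Y(g, k)$ is an even pivot, so $\phi^{-1}([0, 0])$ is an even pivot of $Y(10, 9)$, hence equal to either $[0, 0]$ or $[8, 8]$ (the only even pivots in $Y(10, 9)$, by Figure \ref{fig:y109}). Each admits in $Y(10, 9)$ a direct predecessor that is not an odd pivot: $[8, 0]$ precedes $[0, 0]$ but not its reverse $[0, 8]$, while $[0, 8]$ precedes $[8, 8]$ but not its reverse $[8, 0]$. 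The image of this predecessor under $\phi$ is then a direct predecessor of $[0, 0]$ in $Y(g, k)$ that is itself not an odd pivot, so Lemma \ref{lem:1089graph2} forces $k + 1 \mid g$.

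The main obstacle is the dead-end analysis in the $(\Leftarrow)$ direction: at each node of the claimed graph, and for every value of $u$ other than those producing the edges listed above, one must show that the successor's second coordinate $(t - b^2 s + b^{m+1} u)(k-1)/b^{m+2}$ either fails the integer-value constraint required to be a valid node label in $[0, k-1]$, or, when it does not, initiates a chain of successors under Lemma \ref{lem:1089graph1}\ref{itm:dsuc} that never reaches a triple with $t = 0$---so by Corollary \ref{cor:precede0} the putative successor cannot lie in $Y(g, k)$. This amounts to careful divisibility bookkeeping at each of the five non-starting nodes, guided by the mod-$b$ invariant of Lemma \ref{lem:1089graph1}\ref{itm:t0}.
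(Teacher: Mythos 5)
Your proof is correct and takes essentially the same route as the paper's: the forward direction iterates Lemma \ref{lem:1089graph1} along exactly the parameter chain you describe, using the mod-$b$ invariant of part \ref{itm:t0} together with the range constraint $0\le t\le b^{m+1}$ to rule out all other values of $u$, and the converse culminates in the same application of Lemma \ref{lem:1089graph2} to a non-odd-pivot direct predecessor of $[0,0]$. The only (cosmetic) difference is that you locate that predecessor by pulling $[0,0]$ back through the isomorphism to $Y(10,9)$ and pushing forward one of $[8,0]$ or $[0,8]$, whereas the paper traces the abstract 1089-graph structure forward from the starting node; both are valid, and your computation of the self-loop label $(g-1,g-1)$ at $[k-1,k-1]$ is the correct one.
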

\begin{proof}
We first show that $k+1| g$ implies $Y(g, k)$ is a 1089 graph, and then the converse. The values of the edge and node labels in Figure \ref{fig:1089graph} are established in the first part of the proof.

Suppose $g=b(k+1)$.  We may now determine $Y(g, k)$ by repeated application of Lemma \ref{lem:1089graph1}.  Consider the starting node: $[[0, 0]]=[[0(k-1)/b^0, 0(k-1)/b^1]]$, so by Lemma \ref{lem:1089graph1}.\ref{itm:dsuc} the starting node can only directly precede nodes of the form $[0(k-1)/b, bu(k-1)/b^2]=[0(k-1)/b^0, u(k-1)/b^1]$.  Note that if $[0(k-1)/b^0, u(k-1)/b^1]$ is a node, we have $u \equiv 0\pmod{b}$ by Lemma \ref{lem:1089graph1}.\ref{itm:t0}.  We also have $0\le u \le b$ because $0\le r_i\le k-1$ for all $i$, so $u=0$ or $u=b$.  But the edge leading to this first node has the form $(u, ku)$, by Lemma \ref{lem:1089graph1}.\ref{itm:edges}, and $a_0\neq 0$ so $u=b$.  Therefore the starting node directly precedes only the node $[0, k-1]$, by the edge $(b, bk)$.

Similar reasoning shows that $[0, k-1]$ only directly precedes $[k-1, k-1]$, by the edge $(b-1, bk-1)$; that $[k-1, k-1]$ only directly precedes itself, by the edge $(bk-b-1, bk-b-1)$, and $[k-1, 0]$, by the edge $(bk-1, b-1)$; that $[k-1, 0]$ only directly precedes $[0, 0]$, by the edge $(bk, b)$; and that $[0, 0]$ only directly precedes itself, by the edge $(0, 0)$, and $[0, k-1]$, by the edge $(b, bk)$.  The graph $Y(g, k)$ thus determined is shown in Figure \ref{fig:1089graph}, and is a 1089 graph.

Now suppose that $Y(g, k)$ is isomorphic to $Y(10, 9)$ (Figure \ref{fig:y109}).  The starting node then has a direct successor $[r, s]$, with $r \neq s$, since the node is not a pivot node, by the isomorphism.  Again by the isomorphism, this node $[r,s]$ has an even pivot node as its only direct successor, which we call $[t, t]$.  This node $[t, t]$ has a self-loop and directly precedes only one other node; by Lemma \ref{lem:edgereverse} this node must be $[s, r]$ -- note that $[s, r]$ must be distinct from $[t,t]$ because $s\neq r$.  The node $[s, r]$ directly precedes only one other node, and by Lemma \ref{lem:edgereverse} this must be $[0, 0]$, since $[r,s]$ directly succeeds the starting node.  Now $[s, r]$ is not an odd pivot node, because all odd pivot nodes are even pivot nodes in 1089 graphs and $s \neq r$, but it directly precedes $[0, 0]$.   Therefore, by Lemma \ref{lem:1089graph2}, we have $k+1\mid g$.  By the first part of this proof, the $Y(g, k)$ determined is shown in Figure \ref{fig:1089graph}.
\end{proof}

\begin{remark}\label{rem:1089graph}
This theorem is related Holt's work (Theorem 6 in his first paper \cite{holtL} and Conjectures 1 and 3 and Corollary 2 in his second \cite{holtS}) on \textit{symmetric} reverse multiples (defined in his first paper \cite{holtL}): reverse multiples such that $r_{i-1}=r_{n-2-i}$, as is the case with reverse multiples arising from 1089 graphs.  In his second paper, Holt conjectures that a $(g, k)$ reverse multiple is symmetric if and only if $k+1\mid g$.  If true, this conjecture would give a further distinction to the 1089 graph, beyond the number theoretical characterization and the corollaries below: it is the graph for which, if $[a, b]\longrightarrow[c, d]$ is in the graph, then $b=c$.  Note also that, in finding the edge and node labels in 1089 graphs, Theorem \ref{thm:1089graph} also establishes part of Theorem 3.2 in Sloane's paper, which in addition finds the generating function for the number of reverse multiples with a given number of digits and characterizes the associated reverse multiples for 1089 graphs; Holt \cite{holtS} makes a related conjecture (Conjecture 3) for the broader problem of symmetric reverse multiples.
\end{remark}

It is a consequence of Theorem \ref{thm:1089graph} that, for $g=b(k+1)$, all $(g, k)$ reverse multiples are contained in the list $(b, b-1, bk-1, bk)_g, (b, b-1, bk+b-1, bk-1, bk)_g, (b, b-1, bk+b-1, bk+b-1, bk-1, bk)_g, (b, b-1, bk+b-1, bk+b-1, bk+b-1, bk-1, bk)_g, (b, b-1, bk-1, b-1, bk, b, b-1, bk-1, bk)_g\ldots$ that may be read off Figure \ref{fig:1089graph} in the manner described in Subsection \ref{subsec:ex}. Furthermore, several minor results follow immediately from Theorem \ref{thm:1089graph}.

\begin{corollary}\label{cor:oddpred0}
In a Young graph that is not a 1089 graph ($k+1 \nmid g$), all direct predecessors of $[0, 0]$ are odd pivot nodes.
\end{corollary}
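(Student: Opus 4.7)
The plan is to observe that this corollary is essentially the direct contrapositive of Lemma \ref{lem:1089graph2}, so the proof should be extremely short.

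Concretely, I would argue as follows. Suppose $Y(g,k)$ is not a 1089 graph, so by Theorem \ref{thm:1089graph} we have $k+1\nmid g$. Let $[r,s]$ be any direct predecessor of $[0,0]$ in $Y(g,k)$. I would then assume, for the sake of contradiction, that $[r,s]$ is not an odd pivot node. Lemma \ref{lem:1089graph2} applies directly to this configuration and yields $k+1\mid g$, contradicting the hypothesis. Hence every direct predecessor of $[0,0]$ must be an odd pivot node.

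There is no real obstacle here: the content of the corollary has been fully absorbed into Lemma \ref{lem:1089graph2}, and the only additional ingredient is Theorem \ref{thm:1089graph}, which identifies the arithmetic condition $k+1\mid g$ with $Y(g,k)$ being a 1089 graph so that the hypothesis ``is not a 1089 graph'' may be rephrased as $k+1\nmid g$. I would keep the write-up to two or three sentences and not introduce any new notation.
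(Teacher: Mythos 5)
Your proposal is correct and matches the paper's proof exactly: the paper likewise derives the corollary as an immediate consequence of Lemma \ref{lem:1089graph2} (taken in contrapositive form) together with Theorem \ref{thm:1089graph} to translate ``not a 1089 graph'' into $k+1\nmid g$. Nothing is missing.
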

\begin{proof}
This follows from Lemma \ref{lem:1089graph2} and Theorem \ref{thm:1089graph}.
\end{proof}

\begin{corollary}\label{cor:oddinall}
All Young graphs contain an odd pivot node other than $[0, 0]$.
\end{corollary}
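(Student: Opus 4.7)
The plan is to split into cases based on whether $k+1$ divides $g$. When $k+1 \mid g$, Theorem \ref{thm:1089graph} hands us the complete structure of $Y(g, k)$ as displayed in Figure \ref{fig:1089graph}, and I would simply point to the node $[k-1, k-1]$: it is an even pivot node because it has the form $[a, a]$, and since it carries a self-loop it qualifies (by the parenthetical clause in Definition \ref{def:pivnodes}) as an odd pivot node as well. Because $k \ge 2$ we have $k - 1 \ne 0$, so this node is distinct from $[0, 0]$, completing this case.

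In the remaining case, $k+1 \nmid g$, Corollary \ref{cor:oddpred0} already tells us that every direct predecessor of $[0, 0]$ in $Y(g, k)$ is an odd pivot node, so the task reduces to exhibiting some direct predecessor of $[0, 0]$ other than $[0, 0]$ itself. I would start from the starting node $[[0, 0]]$, which must have at least one direct successor $X$ (assuming $Y(g, k)$ is non-empty). First I would observe that $X \ne [0, 0]$: specializing equation (\ref{eq:eqnpair}) to $i = 0$ with $r_{-1} = r_{n-1} = 0$ and imposing $r_0 = r_{n-2} = 0$ forces $a_0 = k^2 a_0$, hence $a_0 = 0$ since $k \ge 2$, but edge labels leaving the starting node are forbidden from containing a zero. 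By Corollary \ref{cor:precede0}, $X$ precedes $[0, 0]$, so a directed path runs from $X$ to $[0, 0]$; the node sitting immediately before the path's first arrival at $[0, 0]$ is then, by construction, a direct predecessor of $[0, 0]$ distinct from $[0, 0]$, and Corollary \ref{cor:oddpred0} promotes it to an odd pivot node.

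There is no real obstacle here: the proof is essentially bookkeeping, assembling Corollaries \ref{cor:precede0} and \ref{cor:oddpred0} together with the elementary observation that the starting node cannot point directly at $[0, 0]$. The only mild subtlety is the implicit non-emptiness assumption on $Y(g, k)$; if $Y(g, k)$ is empty then the statement is vacuous, and otherwise the starting node has a successor and the argument above goes through.
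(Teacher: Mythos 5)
Your proof is correct and follows essentially the same route as the paper's: both arguments hinge on Corollary \ref{cor:oddpred0} together with the structure of 1089 graphs, the only difference being that you make the case split on $k+1\mid g$ explicit and supply the (omitted in the paper) justification that $[0,0]$ has a direct predecessor other than itself. No gaps.
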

\begin{proof}
Corollary \ref{cor:precede0} implies that the node $[0, 0]$ is in all Young graphs $Y(g, k)$ (also see property P4 in Section 2.7 in Sloane \cite{sloane}).   As it is in the Young graph, the node $[0, 0]$ must have some direct predecessor $[r, s] \neq [0, 0]$.  If $[r, s]$ is not an odd pivot node then the Young graph is a 1089 graph, by Corollary \ref{cor:oddpred0}, in which case it contains an odd pivot node by definition.
\end{proof}

\begin{corollary}[Conjecture 3.5 of Sloane]\label{cor:lastdig1089}
If there is a $(g, k)$ reverse multiple with first and last digits that sum to $g$, $Y(g, k)$ is a 1089 graph.  If $Y(g, k)$ is a 1089 graph then every $(g, k)$ reverse multiple has first and last digits summing to $g$.
\end{corollary}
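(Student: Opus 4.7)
The plan is to reduce both implications to the number-theoretic characterization established in Theorem \ref{thm:1089graph}, namely that $Y(g,k)$ is a 1089 graph if and only if $k+1\mid g$, together with the explicit edge labels recorded in Figure \ref{fig:1089graph}.

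For the second sentence (every reverse multiple has $a_{n-1}+a_0=g$ whenever $Y(g,k)$ is a 1089 graph), I would assume $g=b(k+1)$ and simply read off Figure \ref{fig:1089graph}: the unique edge leaving the starting node is labeled $(b,bk)$. By Young's theorem (Theorem \ref{thm:youngsthm}), every $(g,k)$ reverse multiple corresponds to a path that begins at the starting node, so in each such path the initial edge contributes $a_{n-1}=b$ and $a_0=bk$. Therefore $a_{n-1}+a_0=b(k+1)=g$.

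For the first sentence, suppose $(a_{n-1},\ldots,a_0)_g$ is a $(g,k)$ reverse multiple with $a_{n-1}+a_0=g$. My strategy is to write equation (\ref{eq:rmdig}) at the extremal indices $i=0$ and $i=n-1$, using $r_{-1}=r_{n-1}=0$, yielding $ka_0=a_{n-1}+r_0g$ and $ka_{n-1}+r_{n-2}=a_0$. Adding these and substituting $a_{n-1}+a_0=g$ gives $r_{n-2}=(r_0-k+1)g$; the bounds $0\le r_0\le k-1$ and $0\le r_{n-2}\le k-1<g$ then force $r_0=k-1$ and $r_{n-2}=0$. Feeding $r_{n-2}=0$ back into $ka_{n-1}+r_{n-2}=a_0$ gives $a_0=ka_{n-1}$, and combining this with $a_{n-1}+a_0=g$ yields $(k+1)a_{n-1}=g$. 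Hence $k+1\mid g$, and Theorem \ref{thm:1089graph} completes the argument.

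There is no real obstacle in either direction: the forward implication is a direct appeal to the already-established labels of Figure \ref{fig:1089graph}, and in the converse the delicate-looking step of extracting $r_0=k-1$ and $r_{n-2}=0$ is merely a bookkeeping check against the known bounds on the $r_i$. The only thing to watch is the sign in the equation $r_{n-2}=(r_0-k+1)g$, where $r_{n-2}\ge 0$ and $r_0\le k-1$ must be combined correctly to rule out every case but equality.
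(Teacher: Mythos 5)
Your proof is correct and follows essentially the same route as the paper: both directions reduce to the characterization $k+1\mid g$ from Theorem \ref{thm:1089graph} together with the explicit labels of Figure \ref{fig:1089graph}. The only difference is that where the paper cites Sloane's Theorem 3.4 to conclude $a_{n-1}=g/(k+1)$ from $a_0+a_{n-1}=g$, you derive this directly from equation (\ref{eq:rmdig}) at $i=0$ and $i=n-1$; that computation is valid, so your argument is a self-contained version of the same proof.
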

\begin{proof}
By Sloane's \cite{sloane} Theorem 3.4, if $a_0+a_{n-1}=g$ then we must have $a_{n-1}=\frac{g}{k+1}$, so that $Y(g, k)$ is a 1089 graph by Theorem \ref{thm:1089graph}.  If $Y(g, k)$ is a 1089 graph then, by Theorem \ref{thm:1089graph}, $a_0=\frac{gk}{k+1}$ and $a_{n-1}=\frac{g}{k+1}$, and these numbers sum to $g$.
\end{proof}

Isomorphism is an equivalence relation on Young graphs, and thus defines a corresponding equivalence relation on pairs of integers $(g, k)$: we take $(g, k) \sim (g', k')$ if and only if $Y(g, k) \simeq Y(g', k')$.  Via these corresponding relations, any equivalence class $X$ of Young graphs corresponds to an equivalence class\footnote{Note that two degenerate Young graphs -- graphs for which $H(g, k)$ contains no pivot nodes -- are isomorphic.  The equivalence class of $(g, k)$ pairs corresponding to the equivalence class of degenerate graphs is of great interest, as characterizing this equivalence class is equivalent to characterizing all $g$ and $k$ for which reverse multiples exist.} $C$ of pairs $(g, k)$, and so for any equivalence class $X$ of Young graphs we can also define a relation $R_X(g, k)$ on $g$ and $k$ to be the relation that holds if and only if $(g, k)\in C$; this relation sometimes coincides with a convenient, simply expressed arithmetical condition, as is the case for 1089 graphs and the complete graphs discussed in Section \ref{sec:completegraphs}.  Theorem \ref{thm:1089graph} has a somewhat interesting consequence regarding these relations and equivalence classes.

\begin{corollary}\label{cor:equivclassdiv}
The set of 1089 graphs is the only equivalence class $X$ of Young graphs such that $R_X(g, k)$ holds if and only if $f(k) \mid g$, for some fixed function $f:\mathbb{Z}^+\to\mathbb{Z}$.
\end{corollary}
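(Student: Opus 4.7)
The plan is to establish the two directions of the claim separately. For the first, Theorem \ref{thm:1089graph} already gives that $Y(g,k)$ is a 1089 graph if and only if $(k+1)\mid g$, so taking $f(k)=k+1$ immediately exhibits the 1089 equivalence class as one whose characterizing relation has the stated divisibility form.

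For uniqueness, I would suppose $X'$ is any equivalence class of Young graphs for which $R_{X'}(g,k)$ holds if and only if $f'(k)\mid g$, for some $f':\mathbb{Z}^+\to\mathbb{Z}$, and aim to produce a single pair $(g^*,k_0)$ with $Y(g^*,k_0)$ belonging to both $X'$ and the 1089 class $X$; since equivalence classes are disjoint or equal, this will force $X'=X$. Because $X'$ is necessarily nonempty as an equivalence class, I can pick some $(g_0,k_0)$ with $Y(g_0,k_0)\in X'$; then $f'(k_0)\mid g_0$ with $g_0\ge k_0+1\ge 3$ forces $f'(k_0)\ne 0$, and since $f'(k)\mid g\Leftrightarrow |f'(k)|\mid g$ I may assume $f'(k_0)>0$.

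With $k_0$ fixed, I would then take $g^*=\operatorname{lcm}(k_0+1,\,f'(k_0))$. Since $g^*\ge k_0+1>k_0$, the pair $(g^*,k_0)$ is in range, and $g^*$ is divisible by both $k_0+1$ and $f'(k_0)$. By Theorem \ref{thm:1089graph}, $Y(g^*,k_0)$ is then a 1089 graph, hence in $X$, and by the hypothesis on $f'$, also in $X'$, so $X=X'$. The only thing to be careful about is the sign/zero case for $f'$, handled as above via nonemptiness of $X'$ and sign-invariance of divisibility; the substantive observation is just that two infinite arithmetic progressions of positive integers (the multiples of $k_0+1$ and the multiples of $f'(k_0)$) must share a term, so already within the single row $k=k_0$ the two putative divisibility-defined classes are forced to collide and hence to coincide.
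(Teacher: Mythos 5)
Your proof is correct and follows essentially the same strategy as the paper's: exhibit a single pair $(g,k)$ with both $f(k)\mid g$ and $(k+1)\mid g$, so that the two equivalence classes intersect and hence coincide (the paper simply takes $k=17$ and $g=18\lvert f(17)\rvert$ where you take $g^*=\operatorname{lcm}(k_0+1, f'(k_0))$). Your version is in fact slightly more careful, since by choosing $k_0$ from an actual member of $X'$ you rule out the degenerate possibility $f'(k_0)=0$, a case the paper's one-line argument silently assumes away.
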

\begin{proof}
Suppose there is such an equivalence class $X$ of Young graphs.  Then $Y(18|f(17)|, 17)$ is in $X$, but by Theorem \ref{thm:1089graph} is also a 1089 graph, so $X$ is the set of 1089 graphs.
\end{proof}

While not related to 1089 graphs, the following result comes from a line of reasoning similar to that of Lemma \ref{lem:1089graph2}, and so we include it in this section.

\begin{proposition}\label{prop:oddthrueven}
If a node $[r, s]$ directly precedes, by an edge $(a, b)$, a node $[t, t]$ that has a self-loop $(c, c)$, $[r, s]$ is an odd pivot node if and only if $0 \le a+b-c \le g-1$, and if it is an odd pivot node then the edge from $[r, s]$ to $[s, r]$ is labeled $(a+b-c, a+b-c)$.
\end{proposition}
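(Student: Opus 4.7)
The plan is to exploit the explicit formula for the putative self-reversing edge mentioned in Subsection \ref{subsec:youngsthm}, namely that if $[r,s]$ directly precedes $[s,r]$ then the edge between them is forced to be $((rg-s)/(k-1),(rg-s)/(k-1))$. Thus $[r,s]$ is an odd pivot node precisely when the value $x := (rg-s)/(k-1)$ is an integer lying in $\{0,1,\ldots,g-1\}$. The whole proof reduces to showing that under our hypotheses this value coincides with $a+b-c$, and then invoking the fact that $a,b,c$ are already integers.

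First I would write down the three pairs of equations coming from equation (\ref{eq:eqnpair}). The edge $(a,b)$ from $[r,s]$ to $[t,t]$ yields
\[
kb + s = a + tg, \qquad ka + t = b + rg.
\]
The self-loop $(c,c)$ at $[t,t]$ yields the single equation $kc+t=c+tg$, i.e.\ $(k-1)c = t(g-1)$. Next I would add the two equations for the $(a,b)$ edge to obtain
\[
(k-1)(a+b) = (r+t)g - s - t.
\]
Subtracting $(k-1)c = t(g-1) = tg - t$ from both sides gives $(k-1)(a+b-c) = rg - s$, so
\[
a+b-c \;=\; \frac{rg-s}{k-1}.
\]

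At this point the verification is essentially finished. The left side is a manifest integer (since $a,b,c$ are nonnegative integers), and the right side is exactly the label required by the remark following Definition \ref{def:pivnodes} for any edge from $[r,s]$ to $[s,r]$. To conclude the ``if and only if'', I would observe that if $0 \le a+b-c \le g-1$ then $(a+b-c,a+b-c)$ is a valid digit pair, and plugging this pair into (\ref{eq:eqnpair}) with predecessor $[r,s]$ gives a successor node, which the equation $(k-1)(a+b-c)=rg-s$ forces to be $[s,r]$; hence $[r,s]$ is an odd pivot node whose edge to $[s,r]$ has the claimed label. Conversely, if $[r,s]$ is an odd pivot node, the edge label $a+b-c = (rg-s)/(k-1)$ must itself be a legal base-$g$ digit, forcing $0 \le a+b-c \le g-1$.

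I do not anticipate any real obstacle; this is an algebraic identity wrapped around the general observation about self-reversing edges. The only subtlety worth stating explicitly in the write-up is that divisibility of $rg-s$ by $k-1$ comes for free from the identity $a+b-c=(rg-s)/(k-1)$, so it need not be assumed as a separate hypothesis, and the integrality of $a+b-c$ is automatic from the integrality of the edge labels of the two given edges.
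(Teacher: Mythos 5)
Your proposal is correct and follows essentially the same route as the paper's proof: both derive $(k-1)(a+b) = rg - s + (g-1)t$ from the edge $(a,b)$, use $(k-1)c = t(g-1)$ from the self-loop to get $a+b-c = (rg-s)/(k-1)$, and then identify this with the forced label $((rg-s)/(k-1),(rg-s)/(k-1))$ of any edge from $[r,s]$ to $[s,r]$. No gaps.
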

\begin{proof}
Taking equation (\ref{eq:eqnpair}) for the given edge $(a, b)$ gives $kb+s=a+tg$ and $ka+t=b+rg$, yielding $(k-1)(a+b)=rg-s+(g-1)t$.  Taking equation (\ref{eq:eqnpair}) for the self-loop at $[t, t]$ gives $(g-1)t=(k-1)c$, so that $(rg-s)/(k-1)=a+b-c$.  The edge from $[r, s]$ to $[s, r]$ can be found, from equation (\ref{eq:eqnpair}), to be $((rg-s)/(k-1), (rg-s)/(k-1))=(a+b-c, a+b-c)$.  So if $0 \le a+b-c \le g-1$ then there is an edge from $[r, s]$ to $[s, r]$, and it is labeled $(a+b-c, a+b-c)$, and if there is an edge from $[r, s]$ to $[s, r]$ it must be $(a+b-c, a+b-c)$ and so $0 \le a+b-c \le g-1$.
\end{proof}

\section{Complete Graphs}\label{sec:completegraphs}
We now turn to the proof of another of Sloane's conjectures (Conjecture 3.7 in his paper), concerning complete Young graphs.

\begin{definition}[Complete Graph]\label{def:completegraph}
A Young graph $Y$ is called a \textit{complete Young graph on $m$ nodes} when the nodes that are not $[[0,0]]$ form the complete directed graph on $m$ nodes and there is an edge from $[[0,0]]$ to every node except for $[0,0]$. The equivalence class of complete Young graphs on $m$ nodes is denoted $K_m$.
\end{definition}

Figure \ref{fig:y194} shows an example of a complete graph, $Y(19, 4)\in K_4$. Sloane's conjecture characterizes all $(g, k)$ that correspond to graphs in $K_m$; we prove this conjecture in Theorem \ref{thm:completegraph}.  Several preliminary results are needed, some of which were proven by Sloane, and several other interesting digressions from the course of the proof are included.

\begin{lemma}\label{lem:0inall}
All Young graphs contain the node $[0,0]$.
\end{lemma}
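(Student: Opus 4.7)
The plan is to verify that $[0,0]$ is reachable from the starting node in $H(g,k)$; since $[0,0]$ has the form $[a,a]$, it automatically qualifies as an even pivot node by Definition \ref{def:pivnodes} and so survives the passage from $H(g,k)$ to $Y(g,k)$.

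Assuming $Y(g,k)$ is nondegenerate, I would fix any pivot node $p$ together with a path $[[0,0]] = u_0 \to u_1 \to \cdots \to u_k = p$ leading to it. Writing $u^*$ for the node obtained by swapping the two components of a label $u$, in the odd pivot case append the additional edge $p \to p^*$ guaranteed by Definition \ref{def:pivnodes}; in the even pivot case $p^* = p$ and no extension is needed. Then apply Lemma \ref{lem:edgereverse} to each internal edge $u_i \to u_{i+1}$ with $i \ge 1$ to produce reverse edges $u_{i+1}^* \to u_i^*$ lying in $Y(g,k)$, and concatenate everything to obtain a path in $Y(g,k)$ (hence in $H(g,k)$) from $[[0,0]]$ ending at $u_1^*$.

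The crucial remaining step is to exhibit an edge $u_1^* \to [0,0]$ in $H(g,k)$. Writing $u_1 = [r_{n-2}, r_0]$ with label $(a_{n-1}, a_0)$, the $i=0$ instance of equation (\ref{eq:eqnpair}) gives $k a_0 = a_{n-1} + r_0 g$ and $k a_{n-1} + r_{n-2} = a_0$, using the convention $r_{-1} = r_{n-1} = 0$. These same relations are exactly the $i = n-1$ instance of (\ref{eq:eqnpair}), which describes an edge $[r_0, r_{n-2}] = u_1^* \longrightarrow [r_{-1}, r_{n-1}] = [0,0]$ labeled $(a_0, a_{n-1})$. Concatenating this edge with the earlier path exhibits $[0,0]$ as a successor of $[[0,0]]$ in $H(g,k)$, and because $[0,0]$ is a pivot node it is retained in $Y(g,k)$.

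The main obstacle is that Lemma \ref{lem:edgereverse} cannot be invoked at the initial edge leaving the starting node, since no edges are permitted to terminate at $[[0,0]]$ and $[[0,0]]$ is formally distinguished from $[0,0]$. The direct $i=0 \leftrightarrow i=n-1$ symmetry of (\ref{eq:eqnpair}) must take the lemma's place at that one edge in order to close the loop back to $[0,0]$; everything else is routine concatenation of paths already guaranteed by Lemma \ref{lem:edgereverse} and the definition of pivot nodes.
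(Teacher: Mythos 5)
Your proof is correct and is essentially the argument the paper has in mind: the paper omits the proof of this lemma, pointing to Sloane's property P4 and to Corollary \ref{cor:precede0}, whose one-line justification is exactly the path-reversal-via-Lemma-\ref{lem:edgereverse} argument you spell out. Your explicit treatment of the initial edge out of $[[0, 0]]$ via the $i=0$ versus $i=n-1$ symmetry of equation (\ref{eq:eqnpair}) is the one detail the paper leaves implicit, and you handle it correctly.
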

\begin{lemma}\label{lem:evencomplete}
A Young graph is a complete graph if and only if every node has the form $[r, r]$.
\end{lemma}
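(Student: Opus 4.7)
The plan is to prove the two implications separately.

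For ($\Leftarrow$): Suppose every node of $Y$ has the form $[r,r]$, and set $S = \{r : [r,r] \in Y\}$; note $0 \in S$ by Lemma \ref{lem:0inall}. Applying equation (\ref{eq:eqnpair}) to a putative edge $[u,u] \to [v,v]$ with label $(A, a)$ and taking the sum and difference of the two resulting equations yields
\[
a + A = \frac{(u + v)(g - 1)}{k - 1}, \quad A - a = \frac{(u - v)(g + 1)}{k + 1},
\]
whence $A = [u(gk - 1) + v(g - k)]/(k^2 - 1)$ and $a = [v(gk - 1) + u(g - k)]/(k^2 - 1)$; analogously the edge $[[0, 0]] \to [r,r]$ has label $(r(g - k)/(k^2 - 1), r(gk - 1)/(k^2 - 1))$. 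The first step is to show every $r \in S$ satisfies $k^2 - 1 \mid r(g - k)$. This condition is visibly necessary for direct successors of $[[0, 0]]$, and the identity $u(gk - 1) \equiv k u(g - k) \pmod{k^2 - 1}$ (from $gk - 1 = k(g - k) + (k^2 - 1)$) propagates it along any edge $[u,u] \to [v,v]$ in $Y$: the integer-label requirement yields $k^2 - 1 \mid (ku + v)(g - k)$, and subtracting $k$ times the hypothesis for $u$ gives $k^2 - 1 \mid v(g - k)$. Induction on distance from the starting node then establishes the divisibility throughout $S$. Given this, the displayed labels are integers whenever $u, v \in S$, and since $u, v \le k - 1$ one directly bounds $A, a \le (k-1)(g-1)(k+1)/(k^2-1) = g - 1$, placing the labels in $[0, g - 1]$. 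The analogous computation for $[[0, 0]] \to [r,r]$ shows both coordinates of its label are at least $1$ for each $r \in S \setminus \{0\}$, so every edge required by the definition of completeness is present in $Y$.

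For ($\Rightarrow$): Suppose $Y$ is complete and, for contradiction, that some node $[r, s] \in Y$ has $r \ne s$. By completeness, both $[r, s] \to [0, 0]$ and $[0, 0] \to [r, s]$ are in $Y$, so by Lemma \ref{lem:edgereverse} the edges $[0, 0] \to [s, r]$ and $[s, r] \to [0, 0]$ lie in $H(g, k)$; since $[0, 0]$ is a pivot node, this places $[s, r] \in Y$. By completeness again, $[r, s] \to [s, r]$ is in $Y$ with edge label $(e, e)$, $e = (rg - s)/(k - 1)$, so $[r,s]$ is an odd pivot node. In the subcase $m = 2$, the distinct nodes $[0, 0], [r, s], [s, r]$ already contradict $Y$ having only two non-starting nodes.

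The main obstacle is the subcase $m \ge 3$. The plan is to extract a contradiction from three families of divisibility constraints forced by completeness: (i) the edges $[[0, 0]] \to [r, s]$ and $[[0, 0]] \to [s, r]$ (both with coordinates at least $1$) require $k^2 - 1 \mid sg - kr$ and $k^2 - 1 \mid rg - ks$; (ii) the edge $[r, s] \to [s, r]$ requires $k - 1 \mid rg - s$; and (iii) the self-loop on $[r, s]$ needs $k^2 - 1 \mid (kr + s)(g - 1)$. Reducing modulo $k - 1$ and $k + 1$ separately gives $sg \equiv r$ and $rg \equiv s$ modulo $k - 1$, together with $sg \equiv -r$ and $rg \equiv -s$ modulo $k + 1$, so $k - 1$ and $k + 1$ each divide $(g^2 - 1)r$ and $(g^2 - 1)s$. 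Combining these modular constraints with the range bounds $0 \le r, s \le k - 1$ and $0 \le e \le g - 1$, I expect the resulting system to be inconsistent unless $r = s$; carrying out this arithmetic is the core difficulty of the proof.
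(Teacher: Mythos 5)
The paper itself offers no proof of this lemma (it defers to Theorem 3.6 of Sloane), so there is nothing internal to compare against; I will assess your argument on its own terms. Your ($\Leftarrow$) direction is correct and complete: the propagation of $k^2-1 \mid r(g-k)$ from the starting node along edges, followed by the explicit label formulas and the bound $A, a \le (k-1)(g-1)(k+1)/(k^2-1) = g-1$, does establish that every required edge exists in $H(g,k)$ and hence in $Y$ (all nodes being even pivot nodes, nothing is pruned).

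The ($\Rightarrow$) direction, however, contains a genuine gap. For $m \ge 3$ you list the divisibility constraints forced by completeness and then write that you ``expect the resulting system to be inconsistent unless $r=s$'' and that ``carrying out this arithmetic is the core difficulty of the proof.'' That is an announcement of a plan, not a proof; the hardest step of the whole lemma is left undone, and the modular consequences you do extract (e.g.\ $k\pm 1 \mid (g^2-1)r$) do not visibly terminate in a contradiction. The frustrating part is that your constraint (i) alone already suffices, with no case split on $m$ and no need for (ii), (iii), or the bound on $e$: from $sg \equiv kr$ and $rg \equiv ks \pmod{k^2-1}$, multiply the first congruence by $r$ and the second by $s$ to get $kr^2 \equiv rsg \equiv ks^2 \pmod{k^2-1}$; since $\gcd(k, k^2-1)=1$ this gives $r^2 \equiv s^2 \pmod{k^2-1}$, and as $0 \le r^2, s^2 \le (k-1)^2 < k^2-1$ we conclude $r^2 = s^2$, hence $r = s$. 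You should replace the heuristic paragraph with this (or some completed) argument; as submitted, the forward implication is not proved.
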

We omit the proofs; the interested reader is directed to property P4 in Section 2.7 (or even Corollary \ref{cor:precede0} here) for Lemma \ref{lem:0inall} and to Theorem 3.6 for Lemma \ref{lem:evencomplete} of Sloane's paper.  We now begin work towards Theorem \ref{thm:completegraph}.

\begin{lemma}\label{lem:1intcomplete}
For given $g$ and $k$, if there is a positive integer $s\le k-1$ such that $s\left(\frac{g-k}{k^2-1}\right)$ is an integer then $Y(g, k)$ is a complete Young graph.
\end{lemma}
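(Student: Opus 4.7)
Let $\delta = \gcd(g-k, k^2-1)$ and let $s_0 = (k^2-1)/\delta$, which is the smallest positive integer with $s_0(g-k)/(k^2-1) \in \mathbb{Z}$; the hypothesis is equivalent to $s_0 \leq k-1$, i.e., $\delta \geq k+1$. Set $L = \lfloor (k-1)/s_0 \rfloor$. The plan is to show that $H(g,k)$ consists of the starting node $[[0,0]]$ together with the diagonal nodes $[js_0, js_0]$ for $j = 0, 1, \ldots, L$, that every ordered pair of these diagonal nodes (including self-loops) is joined by an edge, and that $[[0,0]]$ has an edge to each $[js_0, js_0]$ with $j \geq 1$. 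Since each $[js_0, js_0]$ has a self-loop and is therefore an even pivot node, no nodes are pruned in passing from $H(g,k)$ to $Y(g,k)$, so $Y(g,k)$ is the complete Young graph on $L+1$ nodes.

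The existence of the claimed edges is verified directly from equation (\ref{eq:eqnpair}). Solving for the edge from $[j_1 s_0, j_1 s_0]$ to $[j_2 s_0, j_2 s_0]$ yields $a = s_0[j_1(kg-1) + j_2(g-k)]/(k^2-1)$, with an analogous expression for $b$; both are integers because $(k^2-1) \mid s_0(g-k)$, and since $j_1 s_0, j_2 s_0 \leq k-1$, a direct bound gives $a, b \leq g-1$. The edges out of $[[0,0]]$ are treated similarly, with $a_{n-1} = js_0(g-k)/(k^2-1) \geq 1$ and $a_0 = k a_{n-1} + js_0 \geq 1$, so no forbidden zero labels occur.

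The substantive work is to show that \emph{no other} nodes appear in $H(g,k)$, which I would prove inductively along paths from $[[0,0]]$. For a direct successor $[t, u]$ of a node $[r, r]$ with $r = js_0$ (or of $[[0,0]]$, where we take $r = 0$), equation (\ref{eq:eqnpair}) forces $(k^2-1) \mid g(kr + u) - (r + kt)$. The term $r(gk-1) = js_0[k(g-k) + (k^2-1)]$ is divisible by $k^2-1$, so this reduces to $gu \equiv kt \pmod{k^2-1}$, which I rewrite as $t - u \equiv uk(g-k) \pmod{k^2-1}$. Factoring $g - k = \delta m$ with $\gcd(m, s_0) = 1$ gives $t - u \equiv \delta(ukm \bmod s_0) \pmod{k^2-1}$.

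The main obstacle is the ensuing range-counting step. Because $t, u \in [0, k-1]$, the residue $t - u \pmod{k^2-1}$ lies in $[0, k-1] \cup [k^2-k, k^2-2]$, whereas any nonzero multiple $\delta j$ with $1 \leq j \leq s_0 - 1$ lies in $[\delta, (k^2-1) - \delta] \subseteq [k+1, k^2-k-2]$ by the inequality $\delta \geq k+1$; this interval is disjoint from both pieces. Hence $\delta(ukm \bmod s_0) = 0$, and since $\gcd(km, s_0) = 1$ this forces $s_0 \mid u$, and then $t = u$ follows. This confines the successors of any diagonal node to $[j' s_0, j' s_0]$ with $0 \leq j' \leq L$, closing the induction and yielding $H(g,k) = Y(g,k) = K_{L+1}$.
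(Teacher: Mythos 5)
Your proof is correct, but it takes a genuinely different and more ambitious route than the paper's. The paper's argument is a two-line trick: writing $a_i=r\frac{g-k}{k^2-1}+r_i\frac{kg-1}{k^2-1}+\frac{r_i-r_{n-2-i}}{k^2-1}$, it observes that $s\cdot\frac{r_i-r_{n-2-i}}{k^2-1}$ is an integer of absolute value at most $(k-1)^2/(k^2-1)<1$, hence zero, so every successor of a diagonal node is diagonal; completeness then follows by citing Sloane's Theorem 3.6 (Lemma \ref{lem:evencomplete} here). You instead compute $H(g,k)$ outright: your congruence $t-u\equiv\delta(ukm\bmod s_0)\pmod{k^2-1}$ together with the interval-disjointness argument (which is where the hypothesis $\delta\ge k+1$ enters, playing the role of the paper's ``less than $1$'' bound) not only forces $t=u$ but also pins down $u$ as a multiple of $s_0$, and you verify by direct bounds that every edge of the complete graph on $\{[js_0,js_0]\}_{j=0}^{L}$ is actually present. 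The cost is length and the need to check edge-label ranges explicitly; the gain is that your argument is self-contained (no appeal to Lemma \ref{lem:evencomplete} or to Holt) and simultaneously establishes the node set and the count $m=L+1=1+\floor*{(k-1)/s_0}$, i.e., it subsumes Lemma \ref{lem:nodesofcomplete} and most of the forward direction of Theorem \ref{thm:completegraph} and Corollary \ref{cor:completegraph}, which the paper handles separately.
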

\begin{proof}
We show that all nodes in $Y(g, k)$ have the form $[r, r]$, so that the conclusion follows from Lemma \ref{lem:evencomplete}.  

Consider a node $[r, r]$ in $Y(g, k)$.  Suppose this node directly precedes some node $[r_{n-2-i}, r_i]$ by an edge $(a_{n-1-i}, a_i)$.  We can solve equation (\ref{eq:eqnpair}) for this edge to yield
$$a_i=\frac{rg-rk+r_ikg-r_{n-2-i}}{k^2-1}=r\frac{g-k}{k^2-1}+r_i\frac{kg-1}{k^2-1}+\frac{r_i-r_{n-2-i}}{k^2-1}.$$
Therefore, $s\left(\frac{r_i-r_{n-2-i}}{k^2-1}\right)=sa_i-r\left(s\frac{g-k}{k^2-1}\right)-r_i\left(ks\frac{g-k}{k^2-1}+s\right)$ is an integer.  Now
$$s\frac{|r_i-r_{n-2-i}|}{k^2-1}\le(k-1)\frac{|r_i-r_{n-2-i}|}{k^2-1} \le (k-1)\frac{k-1}{k^2-1}<1.$$
Therefore $s\frac{|r_i-r_{n-2-i}|}{k^2-1}=0$, so $r_i=r_{n-2-i}$, because $s>0$.

Repeating this argument proves that all successors of a node $[r, r]$ have the form $[r^\prime, r^\prime]$; since all nodes in a Young graph are successors of $[[0, 0]]$, all nodes in $Y(g, k)$ must have the form $[r, r]$, so that $Y(g, k)$ is complete by Lemma \ref{lem:evencomplete}.
\end{proof}
\begin{remark}\label{rem:shiftsym1}
This is related to Theorem 9 in Holt's first paper \cite{holtL} and Corollary 6 in his second \cite{holtS}.  Note as well that here, as with 1089 graphs, Holt's partition based on the structure of the carries of reverse multiples aligns nicely with partition by isomorphism, as noted in Remark \ref{rem:isocarry}.
\end{remark}

While not the full result characterizing the occurrences of complete graphs, the following result is still interesting, and gives several further results, and so we digress from the proof of Theorem \ref{thm:completegraph} to include it here.

\begin{proposition}\label{prop:2evenpreccomplete}
If there are two even pivot nodes in $Y(g, k)$ that directly precede the same node, $Y(g, k)$ is a complete graph.
\end{proposition}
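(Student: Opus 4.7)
The plan is to exploit the rigid structure forced by having two common predecessors of a single node and reduce the problem to Lemma \ref{lem:1intcomplete}. Suppose $[a,a]$ and $[b,b]$ are two distinct even pivot nodes, both directly preceding a common node $[t,u]$, via edges $(x,y)$ and $(x',y')$ respectively. My goal is to extract an integer $s$ with $0<s\le k-1$ such that $s\cdot\frac{g-k}{k^2-1}$ is an integer, so that Lemma \ref{lem:1intcomplete} applies.

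First I would write the four instances of equation (\ref{eq:eqnpair}) corresponding to these two edges:
\begin{align*}
ky+a &= x+ug, & kx+t &= y+ag,\\
ky'+b &= x'+ug, & kx'+t &= y'+bg.
\end{align*}
Subtracting the equations in each column eliminates $t$ and $u$ and yields the pair
\begin{equation*}
x-x' = k(y-y') + (a-b), \qquad k(x-x') = (y-y') + (a-b)g.
\end{equation*}
Substituting the first into the second and collecting terms, the cross term involving $k(a-b)$ cancels with $(a-b)g$ after rearrangement, leaving
\begin{equation*}
(k^2-1)(y-y') = (a-b)(g-k).
\end{equation*}

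Because $y$ and $y'$ are integers, the left side is an integer, so $(a-b)\cdot\frac{g-k}{k^2-1}$ is an integer. Setting $s=|a-b|$, we have $s>0$ because the two pivot nodes are distinct, and $s\le k-1$ because $0\le a,b\le k-1$. Hence the hypothesis of Lemma \ref{lem:1intcomplete} is satisfied, and $Y(g,k)$ is a complete Young graph.

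The algebra is essentially the only step with any content, and the cleanest way to present it is to eliminate $t,u$ first (by the left-column subtraction) and $x-x'$ second (by substitution), which I expect to be a routine computation rather than an obstacle; the conceptual point is simply that two common predecessors force the divisibility relation at the heart of Lemma \ref{lem:1intcomplete}.
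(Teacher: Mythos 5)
Your proof is correct and follows essentially the same route as the paper's: write the four instances of equation (\ref{eq:eqnpair}) for the two edges into the common successor, eliminate to get $(k^2-1)(y-y')=(a-b)(g-k)$, and invoke Lemma \ref{lem:1intcomplete} with $s=|a-b|$. The only (harmless) difference is that you bound $s\le k-1$ directly from the node-label bound $0\le r_i\le k-1$, whereas the paper obtains the same bound by estimating the edge-label difference.
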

\begin{proof}
Suppose there is a node $[x, x]$ which directly precedes the nodes $[r, s]$ by the edge labeled $(a, b)$ and a node $[y, y]$ that directly precedes $[r, s]$ by the edge labeled $(c, d)$, with $x \neq y$.  Then, by equation (\ref{eq:eqnpair}), we must have $kb+x=a+sg$, $ka+r=b+gx$, $kd+y=c+sg$, and $kc+r=d+gy$.  These yield $y-x=(c-a)-k(d-b)$ and $g(y-x)=k(c-a)-(d-b),$ which together give $(g-k)(c-a)=(kg-1)(d-b)$.  This rearranges to
$$d-b=\frac{g-k}{k^2-1}((c-a)-k(d-b))=\frac{g-k}{k^2-1}(y-x),$$
so that $c-a=\left(\frac{kg-1}{k^2-1}\right)(y-x)$.  Therefore $\left(\frac{g-k}{k^2-1}\right)|y-x|=|d-b|$ is a positive integer and $|y-x|=\left(\frac{k^2-1}{kg-1}\right)|c-a|\le\left(\frac{k^2-1}{kg-1}\right)g<k$, so, by Lemma \ref{lem:1intcomplete}, $Y(g, k)$ is a complete graph.
\end{proof}
Note that this proposition applies to the starting node, as well as all even pivot nodes, so long as the other node is not $[0, 0]$.

\begin{corollary}\label{cor:evennear0}
If, in a Young graph $Y$, an even pivot node $[s, s]$, with $s \neq 0$, directly precedes or succeeds $[0, 0]$ (the two are equivalent by Lemma \ref{lem:edgereverse}), $Y$ is a complete graph.
\end{corollary}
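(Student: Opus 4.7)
The plan is to derive this as an essentially immediate corollary of Proposition \ref{prop:2evenpreccomplete}, by exhibiting $[0,0]$ and $[s,s]$ as two distinct even pivot nodes with a common direct successor. By Lemma \ref{lem:edgereverse}, the two hypotheses ``$[s,s]$ directly precedes $[0,0]$'' and ``$[s,s]$ directly succeeds $[0,0]$'' are equivalent, so without loss of generality I may assume that $[s,s]$ directly precedes $[0,0]$.

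The key observation is that $[0,0]$ itself directly precedes $[0,0]$ by a self-loop. Setting $r_{i-1} = r_{n-1-i} = 0$ (the label of the source) in the paired equations (\ref{eq:eqnpair}), one verifies that the trivial choice $a_i = a_{n-1-i} = 0$ is a valid solution yielding $r_i = r_{n-2-i} = 0$, so the edge $(0,0)$ from $[0,0]$ to $[0,0]$ is present in $H(g,k)$. Since $[0,0]$ is an even pivot node by virtue of its label and is in $Y(g,k)$ by Lemma \ref{lem:0inall}, the self-loop is retained when passing from $H$ to $Y$.

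Thus $[0,0]$ and $[s,s]$ are two distinct (since $s \neq 0$) even pivot nodes that both directly precede $[0,0]$, and Proposition \ref{prop:2evenpreccomplete} yields that $Y$ is a complete graph. I anticipate no substantive obstacle: the corollary follows from the proposition in a single step once the universal presence of the self-loop at $[0,0]$ is noted. The only point worth a moment's care is the remark following Proposition \ref{prop:2evenpreccomplete}, which cautions about $[0,0]$ as ``the other node''; but that caveat concerns an extension of the proposition to the starting node, whereas the argument here invokes the proposition in its original form, with two genuine even pivot nodes, so no such extension is needed.
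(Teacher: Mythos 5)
Your proof is correct and follows the paper's own argument exactly: both exhibit $[s,s]$ and $[0,0]$ as two distinct even pivot nodes directly preceding the common node $[0,0]$ (using the ever-present self-loop at $[0,0]$) and then invoke Proposition \ref{prop:2evenpreccomplete}. Your explicit verification of the $(0,0)$ self-loop and your handling of the remark about the starting node are both sound; the paper simply leaves these points implicit.
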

\begin{proof}
Under these conditions $[s, s]$ and $[0, 0]$ both directly precede the same node (the node $[0, 0]$), and are even pivot nodes, so that $Y$ is complete by Proposition \ref{prop:2evenpreccomplete}.
\end{proof}

\begin{corollary}\label{cor:predsof0}
In Young graphs that are neither complete graphs nor 1089 graphs, all direct predecessors of $[0, 0]$ are odd, and not even, pivot nodes.  The direct successors of $[0, 0]$ are not even pivot nodes.
\end{corollary}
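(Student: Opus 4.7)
The plan is to deduce this corollary directly from Corollary \ref{cor:oddpred0} and Corollary \ref{cor:evennear0}, with Lemma \ref{lem:edgereverse} serving as the bridge between the predecessor and successor halves of the statement. Corollary \ref{cor:oddpred0} already furnishes the ``odd'' half of the first assertion for free: in any Young graph that is not a 1089 graph, every direct predecessor of $[0,0]$ is automatically an odd pivot node, so no further work is needed for that clause.

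To rule out a direct predecessor of $[0,0]$ also being an even pivot node, I would argue by contradiction. Suppose $[s,s]$ is such a predecessor. Corollary \ref{cor:evennear0} asserts that whenever an even pivot node $[s,s]$ with $s\neq 0$ directly precedes or succeeds $[0,0]$, the Young graph must be complete, contradicting the standing hypothesis. For the claim about direct successors, I would first apply Lemma \ref{lem:edgereverse} to reverse the edge: if an even pivot node $[s,s]$ succeeds $[0,0]$ via an edge labelled $(a,b)$, then $[s,s]$ must also precede $[0,0]$ via the edge $(b,a)$, and Corollary \ref{cor:evennear0} once again forces completeness.

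The one real subtlety, and the step worth flagging as a mild obstacle, is the self-loop $(0,0)$ at $[0,0]$, which technically makes $[0,0]$ a direct predecessor and successor of itself and which is itself an even pivot node. The statement of the corollary must be read as excluding this trivial self-referential instance, in parallel with the explicit ``$s\neq 0$'' qualifier in Corollary \ref{cor:evennear0} and with the aside in the proof of Corollary \ref{cor:oddinall} that restricts attention to a predecessor $[r,s]\neq[0,0]$. Once that convention is in force, the corollary follows in a couple of sentences by combining the two cited results, and no new calculation is required.
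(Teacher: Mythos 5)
Your proof is correct and follows essentially the same route as the paper, which deduces the corollary in one line from Corollaries \ref{cor:oddpred0} and \ref{cor:evennear0} (the latter already covering both the predecessor and successor cases via Lemma \ref{lem:edgereverse}). Your observation about the $(0,0)$ self-loop at $[0,0]$ is a legitimate caveat the paper leaves implicit, and you resolve it in the intended way.
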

\begin{proof}
This follows from Corollaries \ref{cor:oddpred0} and \ref{cor:evennear0}.
\end{proof}

This line of reasoning culminates in a general result on isomorphism.

\begin{theorem}\label{thm:0iso}
If $Y$ and $Y'$ are isomorphic as Young graphs by an isomorphism $\phi$ then $\phi ([[0, 0]])=[[0, 0]]$ and $\phi ([0, 0])=[0, 0]$.
\end{theorem}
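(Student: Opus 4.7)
The plan is to handle the two claims independently via graph-theoretic characterisations of the relevant nodes. For Part 1 ($\phi([[0,0]])=[[0,0]]$), I would note that $[[0,0]]$ is the unique node of $Y$ with in-degree zero: every non-starting node of $H(g,k)$ is introduced as the target of an edge from an already-present node during the recursion, and this incoming edge survives the passage from $H$ to $Y$ (if a non-starting $v$ lies in $Y$, any predecessor $u$ of it in $H$ also precedes some pivot node through $v$, so $u$ lies in $Y$ too). Since $\phi$ preserves in-degree, $\phi([[0,0]])$ has in-degree zero in $Y'$ and therefore equals $[[0,0]]_{Y'}$.

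For Part 2 ($\phi([0,0])=[0,0]$), I claim $[0,0]$ is the unique node of $Y$ satisfying all three of
\begin{enumerate}[label=(\alph*)]
\item $v$ is both an even and an odd pivot node (so $v=[b,b]$ and has a self-loop);
\item $v$ is not a direct successor of $[[0,0]]$;
\item $v$ has an outgoing edge to some direct successor of $[[0,0]]$.
\end{enumerate}
Each of (a)--(c) is preserved by $\phi$, using Part 1 together with the fact that pivot-node status is preserved by any Young-graph isomorphism, so once uniqueness is established the theorem follows. That $[0,0]$ itself satisfies (a)--(c) is direct from equation (\ref{eq:eqnpair}) with $r_{i-1}=r_{n-1-i}=0$: this forces the self-loop $(0,0)$ and rules out any edge $[[0,0]]\to[0,0]$ (its only candidate label $(0,0)$ being forbidden on edges from the starting node), while making every edge $[[0,0]]\to w$ simultaneously an edge $[0,0]\to w$ with the same label.

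The main obstacle is uniqueness. Supposing $v=[b,b]$ with $b\neq 0$ satisfies (a)--(c), let $w=[t,u]$ be a direct successor of $[[0,0]]$ receiving an edge from $v$. Solving equation (\ref{eq:eqnpair}) for the label of $[b,b]\to w$ and comparing it with the label of $[0,0]\to w$ shows that the integrality condition on the new label reduces to $b(g-k)/(k^2-1)\in\mathbb{Z}$; since $0<b\le k-1$, Lemma \ref{lem:1intcomplete} then forces $Y(g,k)$ to be a complete graph. But in a complete graph Definition \ref{def:completegraph} directly excludes every $v\neq[0,0]$ from satisfying (b), so uniqueness holds in that case as well. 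Hence $\phi([0,0])$ is the unique node of $Y'$ satisfying (a)--(c), which must be $[0,0]_{Y'}$.
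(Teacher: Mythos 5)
Your proof is correct and is in substance the paper's own argument: both parts rest on the same two pillars, namely that $[[0,0]]$ is the unique node with no predecessors, and that a second even pivot node sharing a direct successor with $[0,0]$ forces the graph to be complete (your inline computation reducing to $b(g-k)/(k^2-1)\in\mathbb{Z}$ and Lemma \ref{lem:1intcomplete} is exactly Proposition \ref{prop:2evenpreccomplete}), after which $[0,0]$ is singled out as the unique node not directly succeeding the starting node. You package this as a uniqueness characterization via (a)--(c) where the paper argues by contradiction, but the content is the same.
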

\begin{proof}
Clearly $\phi ([[0, 0]])=[[0, 0]]$, since $[[0, 0]]$ is the only node which is the successor of no other node.  We must now only show that $\phi([0, 0])=[0, 0]$.  To this end, suppose $\phi([0 ,0])\neq [0, 0]$.  
As a preliminary, note that the direct successors of the starting node are also direct successors of the node $[0, 0]$, because $[0, 0]$ is in the graph by Lemma \ref{lem:0inall} and the equations given by (\ref{eq:eqnpair}) governing the node and edge labels are the same for the starting node and $[0, 0]$.\footnote{It is, however, possible for $[0, 0]$ to directly precede a node $x$ that does not directly succeed the starting node because the left label on the edge from $[0, 0]$ to $x$ is $0$.}

Because $Y$ exists and is connected, it must include an edge $[[0, 0]]\longrightarrow[r, s]$, for some node $[r, s]$.  Since $\phi$ is an isomorphism, the edge $[[0, 0]]\longrightarrow\phi([r, s])$ is included in $Y'$, and consequently the edge $[0, 0]\longrightarrow\phi([r, s])$ is also included in $Y'$.  Because $[[0, 0]]\longrightarrow[r, s]$ is in $Y$, the edge $[0, 0]\longrightarrow[r, s]$ must also be in $Y$, so $\phi([0, 0])\longrightarrow\phi([r, s])$ is included in $Y'$.  Thus $[0, 0]\longrightarrow\phi([r, s])$ and $\phi([0, 0])\longrightarrow\phi([r, s])$ are both included in $Y'$, and so $Y$ and $Y'$ must be complete graphs, by Proposition \ref{prop:2evenpreccomplete}.  But in complete graphs, $[0, 0]$ is the only node that does not directly succeed the starting node, so $\phi([0, 0])=[0, 0]$, contrary to assumption.
\end{proof}

We now return to the proof of Sloane's conjecture.
\begin{lemma}\label{lem:nodesofcomplete}
For all of the nodes $[r, r]$ in a complete graph $Y(g, k)$, $r\left(\frac{g-k}{k^2-1}\right)$ is an integer and $r\le k-1$.
\end{lemma}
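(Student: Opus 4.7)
The plan is to exploit the specific edge from $[0, 0]$ to $[r, r]$ that completeness guarantees exists. First, by Lemma \ref{lem:evencomplete} every node in the complete graph $Y(g, k)$ has the form $[r, r]$. The inequality $r \le k-1$ is then immediate: node labels are values of the carries $r_i$, and Young's general bound recorded in Subsection \ref{subsec:eqtns} asserts $r_i \le k-1$ for all $i$.

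For the divisibility claim, fix a node $[r, r]$ with $r \ne 0$ (the case $r = 0$ is vacuous). By Definition \ref{def:completegraph} the non-starting nodes form the complete directed graph on $m$ nodes, so in particular there is a directed edge from $[0, 0]$ to $[r, r]$; denote its label by $(A, a)$. I would then apply the paired equations (\ref{eq:eqnpair}) with preceding pair $(r_{n-1-i}, r_{i-1}) = (0, 0)$ and succeeding pair $(r_{n-2-i}, r_i) = (r, r)$ to obtain the linear system
\[
ka = A + rg, \qquad kA + r = a.
\]

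Eliminating $a$ by substituting the second equation into the first yields $(k^2 - 1)A = r(g - k)$, so $A = r(g-k)/(k^2 - 1)$. Because $A$ is a digit of a reverse multiple, it is a nonnegative integer, and the divisibility conclusion follows at once.

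The only genuine choice in the argument is which edge incident to $[r, r]$ to use. The self-loop at $[r, r]$ (which exists by completeness and has label of the form $(a, a)$ by the remark after Definition \ref{def:pivnodes}) would yield only $a = r(g-1)/(k-1)$, a weaker condition than desired; the cross edge from $[0, 0]$ produces the exact quantity $r(g-k)/(k^2-1)$ appearing in the statement. I do not foresee any significant obstacle.
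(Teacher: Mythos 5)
Your proof is correct, but it takes a genuinely different route from the paper's. The paper disposes of this lemma by observing that a path through all nodes of the complete graph corresponds (via Theorem \ref{thm:youngsthm}) to a shifted-symmetric reverse multiple and then citing Theorem 9 of Holt for the arithmetic conclusion, with a sketched alternative that inducts along successors starting from $[[0,0]]$ (in the style of Lemma \ref{lem:1intcomplete}). You instead isolate the single edge $[0,0]\longrightarrow[r,r]$, which completeness guarantees (and $[0,0]$ is present by Lemma \ref{lem:0inall}), and solve the pair of equations (\ref{eq:eqnpair}) to get $A = r(g-k)/(k^2-1)$; integrality of the edge label then gives the divisibility directly, and $r\le k-1$ is just Young's bound on the carries. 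Your computation checks out, and your remark that the self-loop would only yield the weaker condition $(k-1)\mid r(g-1)$ is the right diagnosis of why the cross edge from $[0,0]$ is the one to use. What your approach buys is self-containment: no appeal to Holt's external theorem and no induction over the whole graph, just one linear system. What the paper's approach buys is the explicit connection to Holt's carry-structure classification, which the author is deliberately emphasizing throughout (cf.\ Remark \ref{rem:isocarry}). Two trivial quibbles: the case $r=0$ is trivially true rather than vacuous, and $A$ is best justified as an integer because it is an edge-label component (constrained to $\{0,\dots,g-1\}$ by the construction of $H(g,k)$) rather than ``a digit of a reverse multiple,'' though in a Young graph every edge does in fact lie on a path realizing a reverse multiple, so your phrasing is also defensible.
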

\begin{proof}
Consider a reverse multiple that corresponds, as per Theorem \ref{thm:youngsthm}, to a path through all nodes in $Y(g, k)$.  This is a shifted-symmetric reverse multiple, and so the desired conclusion follows by Theorem 9 of Holt \cite{holtL}.  Alternatively, a proof method similar to that in Lemma \ref{lem:1intcomplete} may be used, where one proves that if $[r, r]$ satisfies the conditions then all its direct successors do as well, so that all nodes in the graph satisfy the conditions because the starting node does.
\end{proof}

We may now prove the main result of this section.
\begin{theorem}[Conjecture 3.7 of Sloane]\label{thm:completegraph}
$Y(g, k)$ is in $K_m$ if and only if there are exactly $m-1$ positive integers $s_1$, $s_2, \ldots, s_{m-1}$ such that $s_j\left(\frac{g-k}{k^2-1}\right)$ is an integer and $s_j\le k-1$ for all $1\le j\le m-1$.
\end{theorem}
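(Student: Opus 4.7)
The plan is to derive the characterization by combining Lemmas \ref{lem:1intcomplete} and \ref{lem:nodesofcomplete} with a direct edge-existence computation at the starting node. Writing $S = \{s \in \mathbb{Z} : 1 \le s \le k-1 \text{ and } (k^2-1) \mid s(g-k)\}$, I aim to show that the non-$[0,0]$ nodes of $Y(g,k)$ (when complete) are in bijection with $S$ via $[s,s] \mapsto s$; then both directions of the biconditional drop out.

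The central step is: for every $s \in S$, the node $[s,s]$ lies in $Y(g,k)$. I would apply equation (\ref{eq:eqnpair}) at $i = 0$ with $r_{-1} = r_{n-1} = 0$ and target $[r_{n-2}, r_0] = [s,s]$. This reduces to the linear system $ka_0 = a_{n-1} + sg$ and $ka_{n-1} + s = a_0$, whose unique solution is $a_{n-1} = s(g-k)/(k^2-1)$ and $a_0 = ka_{n-1} + s = s(kg-1)/(k^2-1)$. The hypothesis $s \in S$ makes $a_{n-1}$ (and hence $a_0$) an integer. A short bound check using $1 \le s \le k-1$ shows $1 \le a_{n-1} \le (g-k)/(k+1) < g$ and $a_0 \le s(kg-1)/(k^2-1) \le (kg-1)/(k+1) \le g-1$, so $(a_{n-1}, a_0)$ is a valid nonzero edge label out of $[[0,0]]$. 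Hence $[s,s] \in H(g,k)$, and being an even pivot node, $[s,s] \in Y(g,k)$.

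For the forward implication, assume $Y(g,k) \in K_m$. Lemma \ref{lem:nodesofcomplete} forces every non-starting node to be of the form $[r,r]$ with $r \in \{0\} \cup S$, so the $m-1$ non-$[0,0]$ nodes inject into $S$, giving $|S| \ge m-1$. The edge-existence step shows the map $S \to \{\text{non-}[0,0] \text{ nodes}\}$, $s \mapsto [s,s]$, is surjective, yielding $|S| \le m-1$ and hence equality. For the converse, suppose $|S| = m-1$; if $m \ge 2$ pick any $s \in S$ and invoke Lemma \ref{lem:1intcomplete} to conclude $Y(g,k)$ is complete. By Lemma \ref{lem:nodesofcomplete} its non-$[0,0]$ nodes correspond to a subset of $S$, and by the edge-existence step each $s \in S$ does appear, giving exactly $m$ nodes total. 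Finally, no edge $[[0,0]] \to [0,0]$ exists (such an edge would force $a_{n-1} = a_0 = 0$, forbidden from the starting node), so the graph matches Definition \ref{def:completegraph} and $Y(g,k) \in K_m$.

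The main obstacle I anticipate is the pair of bound checks $a_{n-1} \le g-1$ and $a_0 \le g-1$ in the edge-existence step; these follow from $s \le k-1$ and elementary manipulations but must be written out carefully to confirm that each proposed edge really lives in $H(g,k)$. The degenerate case $m = 1$ (equivalently $S = \emptyset$) is a boundary worth noting, but since $Y(g,k)$ always contains $[0,0]$ by Lemma \ref{lem:0inall} and every Young graph under discussion has at least one edge from $[[0,0]]$, the content of the theorem really lies in $m \ge 2$, where the argument above applies verbatim.
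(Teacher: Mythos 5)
Your proof is correct and follows essentially the same route as the paper's: Lemma \ref{lem:1intcomplete} gives completeness, Lemma \ref{lem:nodesofcomplete} bounds the nodes by the set $S$, and the reverse inclusion comes from showing every $[s,s]$ with $s\in S$ directly succeeds the starting node. The only difference is that you carry out that last step by explicit computation from equation (\ref{eq:eqnpair}) (with the bound checks done correctly), whereas the paper cites Theorem 9 of Holt and relegates your computation to a footnote as an alternative.
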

\begin{proof}
We first show that the existence of the $s_j$ implies that $Y(g, k)$ is in $K_m$, and then the converse.

Suppose that there exist exactly $m-1$ positive integers $s_1$, $s_2, \ldots s_{m-1}$ less than $k$ such that $s_j\left(\frac{g-k}{k^2-1}\right)$ is an integer. Since $s_1$ exists, by Lemma \ref{lem:1intcomplete}, $Y(g, k)$ must be in $K_l$ for some $l$.  By definition graphs in $K_l$ have $l+1$ nodes, of which one is the starting node and one the node $[0,0]$ (by Lemma \ref{lem:0inall}).  The remaining $l-1$ nodes, by Lemmas \ref{lem:evencomplete} and \ref{lem:nodesofcomplete}, have the form $[r, r]$ where $r\le k-1$ is a positive integer such that $r(g-k)/(k^2-1)$.  There are only $m-1$ such positive integers, so $l \le m$.  Furthermore, for every positive integer $r\le k-1$ such that $r(g-k)/(k^2-1)$ is an integer, Theorem 9 of Holt \cite{holtL} gives a reverse multiple that corresponds via Theorem \ref{thm:youngsthm} to the path $[[0, 0]]\longrightarrow[r, r]$.  Therefore every node $[r, r]$ is in $Y(g, k)$, so that $m\le l$.\footnote{If one wishes to keep to a Young graphical perspective, it may also be shown computationally from equation (\ref{eq:eqnpair}) that every node $[r, r]$ directly succeeds the starting node.}  This completes this part of the proof.

Now suppose that $Y(g, k)$ is in $K_m$.  By Lemma \ref{lem:nodesofcomplete}, all the nodes $[r, r]$ of $Y(g, k)$ are such that $r(g-k)/(k^2-1)$ is an integer and $r\le k-1$.  In particular, we know that there exist such numbers $r$.  Suppose that exactly $l-1$ such numbers $r$ exist.  Then, by the first part of this proof, $Y(g, k)$ is in $K_l$.  Now, because $K_l\cap K_m$ is non-empty, we have $K_l=K_m$, so $l=m$.
\end{proof}

Note the similarity between the following result, as well as Proposition \ref{prop:2digit}, and Theorem 4 of Holt's second paper \cite{holtS}, which states that a $(g, k)$ reverse multiple is shifted-symmetric if and only if $\gcd(g-k, k^2-1)\ge k+1$.
\begin{corollary}\label{cor:completegraph}
$Y(g, k)$ is in $K_m$ if and only if $\floor*{\gcd(g-k, k^2-1)/(k+1)}=m-1$.
\end{corollary}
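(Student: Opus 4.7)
The plan is to reduce this immediately to Theorem \ref{thm:completegraph}, which already tells us that $Y(g,k)\in K_m$ if and only if the set
\[
S \;=\; \{\, s\in\mathbb{Z} : 1\le s\le k-1,\ (k^2-1)\mid s(g-k)\,\}
\]
has exactly $m-1$ elements. So the entire task is to show $|S|=\floor*{\gcd(g-k,k^2-1)/(k+1)}$; this is a purely arithmetic counting problem, independent of any further graph theory.

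To count $|S|$, I would set $d=\gcd(g-k,k^2-1)$ and write $g-k=dq$, $k^2-1=dp$ with $\gcd(p,q)=1$. Then the divisibility condition $(k^2-1)\mid s(g-k)$ becomes $dp\mid s\,dq$, i.e., $p\mid sq$, and since $\gcd(p,q)=1$ this is equivalent to $p\mid s$. Hence $|S|$ equals the number of positive multiples of $p$ in $\{1,2,\ldots,k-1\}$, namely $\floor*{(k-1)/p}$.

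The final step is to simplify $\floor*{(k-1)/p}$. Since $p=(k^2-1)/d=(k-1)(k+1)/d$, one has
\[
\frac{k-1}{p} \;=\; \frac{(k-1)\,d}{(k-1)(k+1)} \;=\; \frac{d}{k+1},
\]
so $|S|=\floor*{d/(k+1)}=\floor*{\gcd(g-k,k^2-1)/(k+1)}$. Combining with Theorem \ref{thm:completegraph} gives $Y(g,k)\in K_m$ iff $\floor*{\gcd(g-k,k^2-1)/(k+1)}=m-1$, as claimed.

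There is no real obstacle; the only subtlety is remembering that the step $p\mid sq\Rightarrow p\mid s$ needs the coprimality $\gcd(p,q)=1$, which is exactly why one divides out by $d=\gcd(g-k,k^2-1)$ at the start. Everything else is a single factorization of $k^2-1=(k-1)(k+1)$ followed by cancellation inside the floor.
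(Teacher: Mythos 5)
Your proposal is correct and takes essentially the same route as the paper: the paper also reduces immediately to Theorem \ref{thm:completegraph} and simply asserts the counting identity ("it is easily shown that there exist $m-1$ such integers if and only if $m-1\le\gcd(g-k,k^2-1)/(k+1)<m$") that you prove in detail via the factorization $g-k=dq$, $k^2-1=dp$ with $\gcd(p,q)=1$. Your write-up supplies the arithmetic the paper leaves to the reader, and it is sound.
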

\begin{proof}
It is easily shown that there exist $m-1$ positive integers $s\le k-1$ such that $s(g-k)/(k^2-1)$ is an integer if and only if $m-1\le\gcd(g-k, k^2-1)/(k+1)<m$.
\end{proof}

The earliest paper \cite{sutcliffe} found in this topic, by Sutcliffe, is largely concerned with reverse multiples with two digits; he concludes (Theorem 2 in his paper) that, for given $g$, there exists a $k$ such that there exists a $(g, k)$ reverse multiple with two digits if and only if $g+1$ is composite.  However, he, and Kaczynski \cite{unabomber} after him, are not particularly concerned with the multiplier $k$: in their terminology, an $n$-digit $(g, k)$ reverse multiple is known only as an ``$n$-digit solution for $g$.''  Corollaries \ref{cor:evennear0} and \ref{cor:completegraph} now imply a result on two-digit reverse multiples with reference to $k$, a connection hinted at in property P7 of Sloane.
\begin{proposition}\label{prop:2digit}
A two-digit $(g, k)$ reverse multiple exists if and only if $k+1 \le \gcd(g-k, k^2-1)$.
\end{proposition}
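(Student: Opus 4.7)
The plan is to recast the existence of a two-digit reverse multiple as a graphical condition on $Y(g,k)$ via Young's Theorem, and then reduce that condition to the desired arithmetic inequality using Corollaries \ref{cor:evennear0} and \ref{cor:completegraph}. Specifically, Theorem \ref{thm:youngsthm} applied with $n=2$ tells us that a two-digit $(g,k)$ reverse multiple $(a_1,a_0)_g$ corresponds to a single edge $[[0,0]]\longrightarrow [r_0,r_0]$ in $Y(g,k)$, and Young's constraint $r_0>0$ (otherwise equation (\ref{eq:rmdig}) forces $a_0=a_1=0$) ensures the terminal node is distinct from $[0,0]$.

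For the forward direction I will use the observation, already made in the proof of Theorem \ref{thm:0iso}, that equation (\ref{eq:eqnpair}) imposes identical constraints on outgoing edges at $[[0,0]]$ and at $[0,0]$, so the edge $[[0,0]]\longrightarrow[r_0,r_0]$ yields an edge $[0,0]\longrightarrow[r_0,r_0]$ with the same label. Corollary \ref{cor:evennear0} then forces $Y(g,k)$ to be a complete graph, and because it contains the distinct nonzero even pivot node $[r_0,r_0]$ it lies in $K_m$ for some $m\ge 2$; Corollary \ref{cor:completegraph} then delivers $\gcd(g-k,k^2-1)\ge k+1$.

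Conversely, assuming this inequality, Corollary \ref{cor:completegraph} places $Y(g,k)$ in $K_m$ for some $m\ge 2$, and by Definition \ref{def:completegraph} together with Lemma \ref{lem:evencomplete} the starting node has an outgoing edge to some even pivot node $[r,r]$ with $r\neq 0$; Young's Theorem then reads off the desired two-digit reverse multiple from this single edge. The only subtlety I anticipate is the bookkeeping needed to identify outgoing-edge conditions at $[[0,0]]$ with those at $[0,0]$ so that Corollary \ref{cor:evennear0} may be invoked; the substantive structural work has already been carried out in establishing the characterization of complete graphs.
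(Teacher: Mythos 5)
Your proposal is correct and follows essentially the same route as the paper's own proof: Young's Theorem converts the two-digit reverse multiple into a length-one path from $[[0,0]]$ to an even pivot node, the identity of the outgoing-edge equations at $[[0,0]]$ and $[0,0]$ plus Corollary \ref{cor:evennear0} forces completeness, and Corollary \ref{cor:completegraph} yields the arithmetic condition, with the converse read off from completeness. Your explicit check that $r_0>0$ (so the pivot node is distinct from $[0,0]$, as Corollary \ref{cor:evennear0} requires) is a small point the paper leaves implicit, but the argument is the same.
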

\begin{proof}
By Theorem \ref{thm:youngsthm}, if there is a two-digit reverse multiple, then there exists a path of length $1$ in $Y(g, k)$ from the starting node to an even pivot node, so that there must be an even pivot node directly succeeding the starting node.  This even pivot node will also directly succeed the node $[0, 0]$, because, as mentioned previously, the equations yielded by equation (\ref{eq:eqnpair}) are the same for the starting node and $[0, 0]$.  This makes $Y(g, k)$ complete, by Corollary \ref{cor:evennear0}.  Corollary \ref{cor:completegraph}, now implies the desired result.

Conversely, suppose $k+1\le\gcd(g-k, k^2-1)$.  Now $Y(g, k)$ is complete by Corollary \ref{cor:completegraph}, so that there exists a path of length $1$ in the graph from the starting node to an even pivot node -- namely, from the starting node to any adjacent node -- so there exists a $(g, k)$ reverse multiple with two digits, by Theorem \ref{thm:youngsthm}.

This result could also follow by Theorem 4 of Holt \cite{holtS} and Young's theorem (Theorem \ref{thm:youngsthm} here).
\end{proof}
Indeed, some simple manipulations show that this result agrees exactly with the characterization of $g$ by Sutcliffe.

\section{Cyclic Graphs}\label{sec:cyclicgraphs}
We now include a minor result and a conjecture on another type of graph mentioned by Sloane.

\begin{definition}[Cyclic Graph]\label{def:cyclicgrpahs}
A Young graph $Y$ is called a \textit{cyclic Young graph on $n$ nodes} if the nodes of $Y$ that are neither the starting node nor $[0, 0]$ form the cyclic directed graph on $n$ nodes, and there exist two nodes $N_1$ and $N_2\neq[0, 0]$ such that $N_1$ directly precedes $N_2$,  the only direct predecessor of $[0, 0]$ (other that itself ) is $N_1$, and the only direct successor of $[0, 0]$ (other than itself) and starting node is $N_2$. The equivalence class of cyclic Young graphs on $n$ nodes is denoted $Z_n$.
\end{definition}

For examples of cyclic graphs, see Figures \ref{fig:z3} and \ref{fig:z5}.  Sloane makes no conjecture analogous to Theorems \ref{thm:1089graph} and \ref{thm:completegraph} for cyclic graphs.  We also have no such conjecture; however, examination of data for low values of $g$ suggests the following.

\begin{conjecture}\label{conj:zodd}
For $t\ge 3$ and $n\ge 2$, the Young graph $Y((n^2+n)t-n^2+n+1, n^2t-(n-1)^2)$ is in $Z_{2n-1}$.
\end{conjecture}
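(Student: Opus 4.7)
The plan is to reparameterize to expose a key divisibility identity, list the cycle nodes and edges explicitly, and then show that no other successor relations exist. Set $m=g-k$. The hypotheses on $g$ and $k$ yield $m=n(t-1)+2$, $g=(n+1)m-1$, and $k=nm-1$, so that
\[
k+1=n(g-k)=nm,
\]
i.e., $n\mid k+1$. The condition $t\ge 3$ becomes $m\ge 2n+2$, which ensures both $2n+1$ and $m-2n-1$ are positive. I then define $2n-1$ candidate cycle nodes $V_0,\ldots,V_{2n-2}$ by the following rule: for $0\le i\le n-1$, set $V_i=[\alpha_i,\beta_i]$ with $\alpha_0=m-n-1$, the increments $\alpha_{i+1}-\alpha_i$ alternating between $2n+1$ and $m-2n-1$ starting with $2n+1$, and $\beta_i=\alpha_i+(n-1-i)m$; for $n\le i\le 2n-2$, set $V_i=[\beta_{2n-2-i},\alpha_{2n-2-i}]$. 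This gives $V_0=[m-n-1,\ nm-n-1]$, an even pivot $V_{n-1}=[\alpha_{n-1},\alpha_{n-1}]$ in the middle, and $V_{2n-2}$ equal to the swap of $V_0$.

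The existence of the cycle edges breaks into three pieces. First, verify that $[[0,0]]$'s unique successor is $V_0$, via the edge $(1,g-n-1)$: the identity $ng\equiv 1\pmod{k+1}$ forces $a_0\equiv k^{-1}\equiv g-n-1\pmod g$ when $a_{n-1}=1$, and the bound $ka_{n-1}+r_{n-2}=a_0\le g-1$ rules out $a_{n-1}\ge 2$. Second, for each $i$ with $0\le i\le n-2$, verify the edge $V_i\to V_{i+1}$ by computing its label $(A,a)$ from equation (\ref{eq:eqnpair}); the divisibility $(k^2-1)\mid g(\alpha_i+k\beta_{i+1})-(\alpha_{i+1}+k\beta_i)$ and the range conditions $a,A\in[0,g-1]$ reduce to elementary identities in $m$ and $n$. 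Lemma \ref{lem:edgereverse} then supplies the edges $V_j\to V_{j+1}$ for $n-1\le j\le 2n-3$ by swapping labels. Third, the ``odd pivot'' edge $V_{2n-2}\to V_0$ has label $(c,c)$ with $c=(g\beta_0-\alpha_0)/(k-1)$, which using $g=(n+1)m-1$ and $k-1=nm-2$ is an integer in $[0,g-1]$; the edge $V_{2n-2}\to[0,0]$ mirrors the edge $[0,0]\to V_0$ by Lemma \ref{lem:edgereverse}, which itself follows from the same computation as for the starting node.

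The heart of the argument, and the step I expect to be the main obstacle, is showing uniqueness of successors. For any node $[r,s]$ and candidate successor $[R,S]$ via edge $(A,a)$, equation (\ref{eq:eqnpair}) yields $(k^2-1)a+R=g(r+kS)-ks$. Reducing modulo $k-1$ and modulo $k+1=nm$ gives
\[
R\equiv (m+1)(r+S)-s\pmod{k-1},\qquad R\equiv (m-1)(r-S)+s\pmod{k+1}.
\]
Since $\mathrm{lcm}(k-1,k+1)\ge (k^2-1)/2\gg k$, at most one $R\in[0,k-1]$ satisfies both congruences for each $S\in[0,k-1]$. Combined with the requirement that $a$ be an integer in $[0,g-1]$ and $A=ka+s-Sg\in[0,g-1]$ (which alone confines $a$ to an interval of length less than two), this pins down a unique candidate successor for each interior $V_i$ with $0\le i\le 2n-3$, namely $V_{i+1}$, and exactly two successors for $V_{2n-2}$ (namely $V_0$ and $[0,0]$) and for $[0,0]$ itself (namely $[0,0]$ and $V_0$). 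Induction on the successor relation, combined with the uniqueness of $V_0$ as the successor of $[[0,0]]$, forces $Y(g,k)$ to consist precisely of the starting node, $[0,0]$, and $V_0,\ldots,V_{2n-2}$ with the edges above. Taking $N_1=V_{2n-2}$ and $N_2=V_0$ verifies the conditions of Definition \ref{def:cyclicgrpahs} and places $Y(g,k)$ in $Z_{2n-1}$. The identity $k+1=nm$, which makes the congruence modulo $k+1$ especially clean, is the arithmetical engine on which the uniqueness analysis ultimately rests.
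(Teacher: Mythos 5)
The statement you are addressing is Conjecture~\ref{conj:zodd}, which the paper explicitly leaves unproven; there is no proof to compare against, only the special cases $n=2$ and $n=3$, which are exactly Theorems~\ref{thm:z3} and~\ref{thm:z5} (for $n=2$ the conjectured family is $Y(6t-1,4t-1)$, matching Theorem~\ref{thm:z3} at $n=2$; for $n=3$ it is $Y(12t-5,9t-4)$, matching Theorem~\ref{thm:z5} at $n=3$). Your reparameterization $m=g-k=n(t-1)+2$, $g=(n+1)m-1$, $k=nm-1$, hence $k+1=n(g-k)$, is correct, and your ansatz for the cycle nodes does reproduce the node labels of Figures~\ref{fig:z3} and~\ref{fig:z5} in those two cases (e.g.\ for $n=3$ it gives $[3t-5,9t-7]$, $[3t+2,6t+1]$, $[6t-6,6t-6]$ and their swaps), so the skeleton --- compute $H(g,k)$ node by node, exhibit the cycle, show no other successors exist --- is the natural generalization of the paper's method and is plausibly the right one.

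As written, however, this is a plan rather than a proof, and the gap sits exactly where you say you ``expect the main obstacle'' to be. Your congruence argument shows that for a fixed source node $[r,s]$ and a \emph{fixed} second coordinate $S$ of a putative successor there is at most one admissible triple $(a,A,R)$; but $S$ still ranges over $[0,k-1]$, so you are left with up to $k$ candidate successors per node, and nothing in the proposal eliminates the unwanted values of $S$ for each of the $2n-1$ cycle nodes. That elimination is precisely the computation Theorems~\ref{thm:z3} and~\ref{thm:z5} carry out explicitly (the bound $a_0'<2$, reduction mod $g$, and so on), and doing it uniformly in the cycle position $i$ and in $n$ is where essentially all of the work lies. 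Likewise, the claims that the edge labels between $V_i$ and $V_{i+1}$ are integers lying in $[0,g-1]$ are asserted to ``reduce to elementary identities'' but never verified for general $i$. There is also a small arithmetical slip: $ng=(n+1)(k+1)-n\equiv -n\pmod{k+1}$, not $1$; the congruence you actually need is $k(n+1)\equiv -1\pmod{g}$, which does yield $a_0\equiv g-n-1\pmod{g}$. Until the successor-elimination step is carried out for a general cycle node, the conjecture remains open.
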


We now prove two minor results on similar types of graphs.

\begin{theorem}\label{thm:z3}
For $t \ge 3$ and $n \ge 2$, the Young graph $Y((n^2+n)t-1, n^2t-1)$ is in $Z_3$, with node and edge labels as shown in Figure \ref{fig:z3}.

\begin{figure}[ht]
\centering
\includegraphics[scale=0.5]{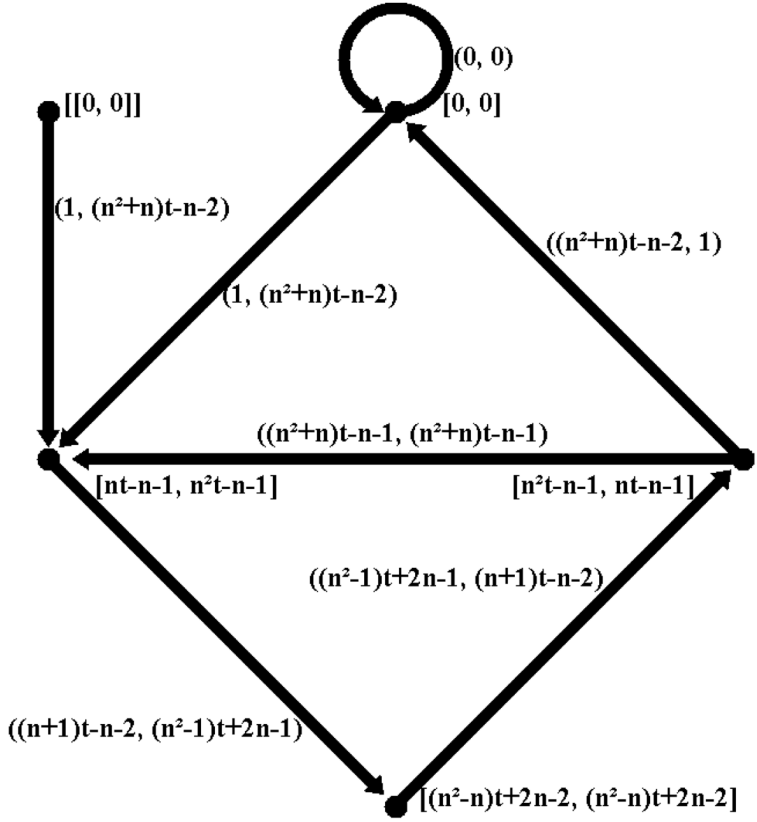}
\caption{$Y((n^2+n)t-1, n^2t-1)$ for $n\ge 2$ and $t\ge 3$}\label{fig:z3}
\end{figure}

\end{theorem}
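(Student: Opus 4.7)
The plan is to describe the five nodes and seven edges of the graph explicitly, verify each claimed edge by substitution into equation (\ref{eq:eqnpair}), and then show that no other outgoing edges exist at any node. Write $g = n(n+1)t - 1$ and $k = n^2 t - 1$, so $g - k = nt$, $k + 1 = n^2 t$, and $\gcd(k, g) = \gcd(n^2 t - 1, nt) = 1$. Set $\alpha = nt - n - 1$, $\beta = n^2 t - n - 1$, and $\gamma = (n - 1)(nt + 2)$; the claimed nodes are $[[0, 0]], [0, 0], [\alpha, \beta], [\gamma, \gamma], [\beta, \alpha]$, and the claimed edges are the self-loop $(0, 0)$ at $[0, 0]$, the parallel edges $(1, g - n - 1)$ from $[[0, 0]]$ and from $[0, 0]$ to $[\alpha, \beta]$, the cycle edge $[\alpha, \beta] \to [\gamma, \gamma]$, its Lemma \ref{lem:edgereverse}-reverse $[\gamma, \gamma] \to [\beta, \alpha]$, the further reverse $[\beta, \alpha] \to [0, 0]$ labeled $(g - n - 1, 1)$, and the pivot edge $[\beta, \alpha] \to [\alpha, \beta]$ labeled $(g - n, g - n)$.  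Existence of the first three edge types reduces to polynomial identities in $n$ and $t$; the pivot edge and its pivot status are supplied by Proposition \ref{prop:oddthrueven} applied with $[0, 0]$ as the adjacent even pivot node (whose self-loop is $(0, 0)$), which yields the label $(g - n - 1 + 1 - 0, g - n - 1 + 1 - 0) = (g - n, g - n)$ and simultaneously witnesses $[\beta, \alpha]$ as odd pivot.

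The main obstacle is uniqueness of the outgoing edges at each node.  For an outgoing edge $(A, a)$ from $[r, s]$ to $[R, S]$, combining the two equations of (\ref{eq:eqnpair}) yields the master relation
\begin{equation*}
(k^2 - 1) A = (kr + S) g - kR - s.
\end{equation*}
The constraints $0 \le R, S \le k - 1$ and $0 \le a \le g - 1$ confine $A$ to the interval $[(rg - k + 1)/k,\ ((r + 1) g - 1)/k]$, whose width $2 + (nt - 2)/k$ is at most $2 + 1/n \le 5/2$ for $n \ge 2$ (using $n(nt - 2) < n^2 t - 1$), so at most three integer candidates for $A$ arise at any node.  Because $\gcd(k, g) = 1$, once $A$ is fixed the master relation determines $R$ modulo $g$, and the bound $R \le k - 1 < g$ pins $R$ down uniquely if any valid $R$ exists, after which $S$ is forced.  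At each of the five nodes one checks the at most three candidate values of $A$ and discards those producing out-of-range $R$ or $S$.  For the starting node and $[0, 0]$ (where $r = s = 0$) the window gives $A \in \{0, 1\}$, with $A = 0$ supplying the self-loop at $[0, 0]$ and $A = 1$ forcing $R = \alpha$ and $S = \beta$.

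The subtlest case is ruling out a self-loop at $[\gamma, \gamma]$: such a loop would require $A = a = \gamma(g - 1)/(k - 1)$ to be an integer.  Reducing $g - 1 \equiv nt \pmod{k - 1}$ and then $\gamma \cdot nt \equiv 2(n^2 - 1) t \equiv -2(t - 2) \pmod{k - 1}$ shows $\gamma(g - 1) \equiv -2(t - 2) \pmod{k - 1}$; since $0 < 2(t - 2) < k - 1 = n^2 t - 2$ for $n \ge 2$ and $t \ge 3$, integrality fails.  Once uniqueness of outgoing edges is confirmed at all five nodes, and because every node in a Young graph is reachable from $[[0, 0]]$, $Y(g, k)$ is precisely the graph depicted in Figure \ref{fig:z3}.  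Its structure matches Definition \ref{def:cyclicgrpahs} with $N_1 = [\beta, \alpha]$ and $N_2 = [\alpha, \beta]$, so $Y(g, k) \in Z_3$.
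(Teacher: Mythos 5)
Your proposal is correct and follows the same overall strategy as the paper's proof: compute $H(g,k)$ outward from the starting node, show each node has exactly the successors depicted in Figure \ref{fig:z3}, and conclude $Y(g,k)=H(g,k)$. The differences are tactical and mostly to your advantage. Where the paper bounds the left edge component at the starting node by the ad hoc estimate $a_0'<2$ and then invokes ``similar considerations'' at each subsequent node, you derive a uniform master relation $(k^2-1)A=(kr+S)g-kR-s$, confine $A$ to a window of at most three integers at \emph{every} node, and use $\gcd(g,k)=1$ to show each candidate $A$ admits at most one valid successor --- a single mechanism covering all five nodes. You also obtain several edges for free from Lemma \ref{lem:edgereverse} and Proposition \ref{prop:oddthrueven} rather than recomputing them, and you make explicit the one genuinely delicate exclusion (no self-loop at $[\gamma,\gamma]$, via $\gamma(g-1)\equiv -2(t-2)\pmod{k-1}$ with $0<2(t-2)<k-1$) that the paper leaves unstated. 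What you assert rather than carry out --- discarding the spurious candidates for $A$ at the three interior nodes --- is no more than the paper itself leaves to the reader. One small step to make explicit at the end: after concluding that $H(g,k)$ is the depicted graph, note that every node precedes the pivot node $[0,0]$, so nothing is removed in passing to $Y(g,k)$.
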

\begin{proof}
Let $g=(n^2+n)t-1$ and $k=n^2t-1$. We compute $H(g, k)$ and thence $Y(g, k)$.

We may compute all direct successors of the starting node.  By equation (\ref{eq:eqnpair}), any node $[r_0', r_0]$ that directly succeeds the starting node by an edge $(a_0', a_0)$ must satisfy
\begin{equation}\label{eq:z31}
\begin{array}{rcl}
(n^2t-1)a_0 & = & a_0'+r_0((n^2+n)t-1) \\
(n^2t-1)a_0'+r_0' & = & a_0.
\end{array}
\end{equation}
Because $a_0\le g-1$ and $r_0' \ge 0$, the second equation of (\ref{eq:z31}) implies $a_0'\le ((n^2+n)t-2)/(n^2t-1)=1+1/n+(1/n-1)/(n^2t-1)<2$.  Recall $a_0'\neq 0$ because it is the last digit of a reverse multiple.  Therefore $a_0'=1$.  Substituting into the first equation of (\ref{eq:z31}) and taking the equation modulo $g$ gives $(n^2t-1)a_0\equiv 1\pmod{g}$, so that $a_0\equiv -(n+1)\pmod{g}$.  Since $0< a_0\le g-1$, this implies $a_0=(n^2+n)t-(n+2)$.  Substituting the values $a_0'=1$ and $a_0=(n^2+n)t-(n+2)$ into equation (\ref{eq:z31}) gives $r_0'=nt-(n+1)$ and $r_0=n^2t-(n+1)$.  Therefore the only direct successor, in $H(g, k)$, of the starting node is the node $[nt-(n+1), n^2t-(n+1)]$, and the corresponding edge is $(1, (n^2+n)t-(n+2))$.

Similar considerations for $[nt-(n+1), n^2t-(n+1)]$ show that its only direct successor in $H(g, k)$ is $[(n^2-n)t+2n-2, (n^2-n)t+2n-2]$, the corresponding edge being $((n+1)t-(n+2), (n^2-1)t+2n-1)$.  In turn, similar considerations for $[(n^2-n)t+2n-2, (n^2-n)t+2n-2]$ show that its only direct successor in $H(g, k)$ is $[n^2t-(n+1), nt-(n+1)]$, by the edge $((n^2-1)t+2n-1, (n+1)t-(n+2))$, and similar considerations for $[n^2t-(n+1), nt-(n+1)]$ show that its only two direct successors are $[nt-(n+1), n^2t-(n+1)]$, by the edge $((n^2+n)t-(n+1), (n^2+n)t-(n+1))$, and $[0, 0]$, by the edge $((n^2+n)t-(n+2), 1)$.  It may then be shown that the only direct successors of $[0, 0]$ are $[nt-(n+1), n^2t-(n+1)]$ and itself, in reasoning almost identical to that of the computation of the direct successors of the starting node.  This fully determines $H(g, k)$.  All nodes in the graph precede some pivot node, and so $Y(g, k)$ is the same labeled graph as $H(g, k)$, namely the one depicted in Figure \ref{fig:z3}.
\end{proof}

A similar result holds for cyclic graphs on five nodes.

\begin{theorem}\label{thm:z5}
For $t\ge 3$ and $n\ge 3$, the Young graph $Y((n^2+n)t-(n+2), n^2t-(n+1))$ is in $Z_5$, with node and edge labels as shown in Figure \ref{fig:z5}.
\begin{figure}[ht]
\centering
\includegraphics[scale=0.6]{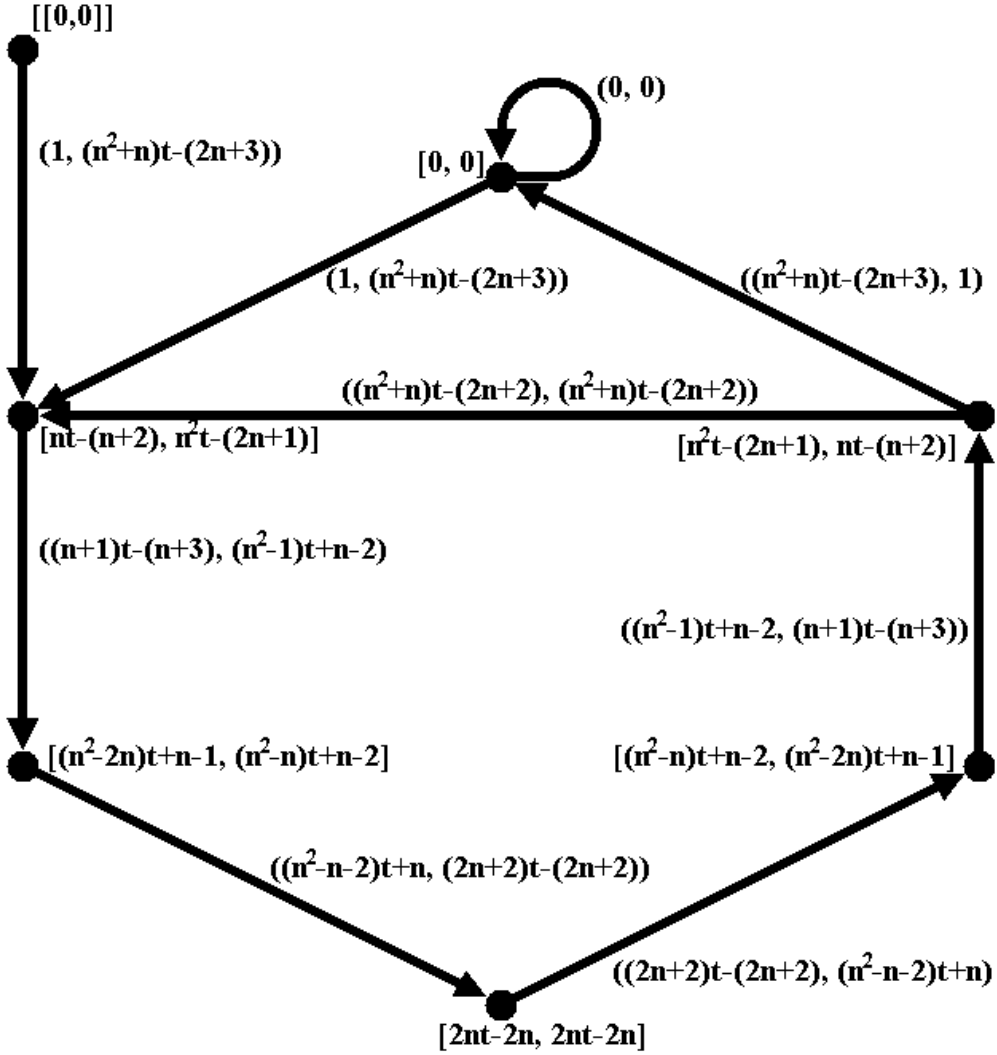}
\caption{$Y((n^2+n)t-(n+2), n^2t-(n+1))$ for $n\ge 3$ and $t\ge 3$}\label{fig:z5}
\end{figure}
\end{theorem}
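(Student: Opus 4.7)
The proof will follow the template of Theorem \ref{thm:z3}: set $g = (n^2+n)t - (n+2)$ and $k = n^2 t - (n+1)$, and then compute $H(g, k)$ directly by repeated application of equation (\ref{eq:eqnpair}), beginning at the starting node and tracing out direct successors until the graph closes up.

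I would start with the starting node, where equation (\ref{eq:eqnpair}) reduces (via $r_{-1} = r_{n-1} = 0$) to the pair $k a_0 = a_{n-1} + r_0 g$ and $k a_{n-1} + r_{n-2} = a_0$. The second equation together with $a_0 \le g - 1$ and $r_{n-2} \ge 0$ forces $a_{n-1} \le (g-1)/k$, and a short computation shows this ratio is strictly less than $2$ whenever $nt \ge 2$, so $a_{n-1} = 1$ (being nonzero by definition). The first equation becomes the congruence $k a_0 \equiv 1 \pmod{g}$, which I would solve using the identity $k(n+1) \equiv -1 \pmod{g}$; this identity drops out of the definitions of $g$ and $k$ and gives $a_0 = g - (n+1)$, from which $r_0$ and $r_{n-2}$ follow. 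This yields the unique node $N_2$ directly succeeding $[[0, 0]]$.

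I would then iterate this procedure at each newly found node $[r, s]$: apply equation (\ref{eq:eqnpair}), bound candidate edge labels via $0 \le a, a' \le g - 1$ and $0 \le r', s' \le k - 1$ to restrict to a small set of candidates, and solve the resulting congruences for the direct successors. Five such iterations are expected to trace out the five-cycle $N_2 \to M_1 \to M_2 \to M_3 \to N_1 \to N_2$ (with $M_2$ turning out to be an even pivot node), and the search at $N_1$ should produce a second solution branch yielding the edge $N_1 \to [0, 0]$. A final computation for $[0, 0]$, analogous to that for the starting node but now permitting $a = 0$, should give exactly the self-loop and an edge back to $N_2$. Since every node then precedes a pivot node, $H(g, k) = Y(g, k)$ matches Figure \ref{fig:z5}.

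The main obstacle is bookkeeping: each of the five cycle-steps demands a careful bounding argument and a separate modular calculation, and the algebraic expressions grow messy quickly. The sharpness of the restrictions $n \ge 3$ and $t \ge 3$ is another subtle point: these should be exactly the values for which each candidate-search yields the claimed unique (or, at $N_1$, exactly two) successor(s), and confirming this will likely require inspecting the boundary cases $n = 2$ or $t = 2$ to see whether spurious additional nodes appear or the graph lies in a different isomorphism class.
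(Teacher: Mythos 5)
Your proposal follows essentially the same route as the paper's proof: compute $H(g,k)$ node by node from the starting node, using the bound $a_{n-1}\le (g-1)/k<2$ to force $a_{n-1}=1$, solving the congruence $ka_0\equiv 1\pmod g$ (the paper likewise obtains $a_0\equiv -(n+1)\pmod g$, i.e.\ $a_0=(n^2+n)t-(2n+3)$), and then iterating the same bounding-plus-congruence step around the five-cycle until the graph closes up at $[0,0]$. The paper's write-up is exactly this computation, with the intermediate steps compressed into ``similar considerations,'' so your sketch matches it in both structure and level of detail.
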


\begin{proof}
The proof is similar to the proof Theorem \ref{thm:z3}. Let $g=(n^2+n)t-(n+2)$ and $k=n^2t-(n+1)$. We compute $H(g, k)$ and thence $Y(g, k)$.

We may compute all direct successors of the starting node.  By equation (\ref{eq:eqnpair}), any node $[r_0', r_0]$ that directly succeeds the starting node by an edge $(a_0', a_0)$ must satisfy
\begin{equation}\label{eq:z51}
\begin{array}{rcl}
(n^2t-(n+1))a_0 & = & a_0'+r_0((n^+n)t-(n+2)) \\
(n^2t-(n+1))a_0'+r_0' & = & a_0.
\end{array}
\end{equation}
Because $a_0\le g-1$ and $r_0'\ge 0$, the second equation of (\ref{eq:z51}) implies $a_0'\le ((n^2+n)t-(n+3))/(n^2t-(n+1))=1+1/n+(1/n-1)/(n^2t-(n+1))<2$. Recall $a_0'\neq 0$ because it is the last digit of a reverse multiple. Therefore $a_0'=1$. Substituting into the first equation of (\ref{eq:z51}) and taking the equation modulo $g$ gives $(n^2t-(n+1))a_0\equiv 1\pmod{g}$, so that $a_0\equiv -(n+1)\pmod{g}$. Since $0<a_0\le g-1$, this implies $a_0=(n^2+n)t-(2n+3)$. Substituting the values $a_0'=1$ and $a_0=(n^2+n)t-(2n+3)$ into equation (\ref{eq:z51}) gives $r_0'=nt-(n+2)$ and $r_0=n^2t-(2n+1)$. Therefore the only direct successor, in $H(g, k)$, of the starting node is node $[nt-(n+2), n^2t-(2n+1)]$, and the corresponding edge is $(1, (n^2+n)t-(2n+3))$.

Similar considerations for $[nt-(n+2), n^2t-(2n+1)]$ show that its only direct successor in $H(g, k)$ is $[(n^2-2n)t+n-1, (n^2-n)t+n-2]$, the corresponding edge being $((n+1)t-(n+3), (n^2-1)t+n-2)$. In turn, similar considerations for $[(n^2-2n)t+n-1, (n^2-n)t+n-2]$ show that its only direct successor in $H(g, k)$ is $[2nt-2n, 2nt-2n]$, by the edge $((n^2-n-2)t+n, (2n+2)t-(2n+2))$; similar considerations for $[2nt-2n, 2nt-2n]$ show that its only direct successor in $H(g, k)$ is $[(n^2-n)t+n-2, (n^2-2n)t+n-1]$, by the edge $((2n+2)t-(2n+2), (n^2-n-2)t+n)$; similar considerations for $[(n^2-n)t+n-2, (n^2-2n)t+n-1]$ show that its only direct successor in $H(g, k)$ is $[n^2t-(2n+1), nt-(n+2)]$, by the edge $((n^2-1)t+n-2, (n+1)t-(n+3))$; and similar considerations for $[n^2t-(2n+1), nt-(n+2)]$ show that its only two direct successors are $[nt-(n+2), n^2t-(2n+1)]$, by the edge $((n^2+n)t-(2n+2), (n^2+n)t-(2n+2))$, and $[0, 0]$, by the edge $((n^2+n)t-(2n+3), 1)$. It may then be shown that the only direct successors of $[0, 0]$ are $[nt-(n+2), n^2t-(2n+1)]$ and itself, in reasoning almost identical to that of the computation of the direct successors of the starting node. This fully determines $H(g, k)$. All nodes in the graph precede some pivot node, and so $Y(g, k)$ is the same labeled graph as $H(g, k)$, namely the one depicted in Figure \ref{fig:z5}.
\end{proof}

While lists of reverse multiples arising from Figures \ref{fig:z3} and \ref{fig:z5} may be given in general form, as with Figure \ref{fig:1089graph}, we omit them for the sake of convenience, as the smallest reverse multiple involved, that is, the four-digit reverse multiple corresponding to the path in $Y((n^2+n)t-1, n^2t-1)$ (Figure \ref{fig:z3}) from the starting node to $[nt-n-1, n^2t-n-1]$ to $[(n^2-n)t+2n-2, (n^2-n)t+2n-2]$, has the rather cumbersome general form of either $(1, (n+1)t-(n+2), (n^2-1)t+2n-1, (n^2+n)t-(n+2))_{(n^2+n)t-1}$ or $n^2(n+1)^4t^3-n(n^4+6n^3+12n^2+10n+3)t^2+(4n^3+10n^2+8n+2)t-(4n+4)$. In its smallest case, $n=2$ and $t=3$, the graph in Figure \ref{fig:z3} gives the (base $10$) sequence $6576$, $32314464$, $549240672$, $9336986208$, $158728660320$, $158760968208\ldots$, and the graph in Figure \ref{fig:z5}, in its smallest case, $n=t=3$, gives the sequence $34633320$, $991522582544640$, $30737199019884240$, $952853168577411840$, $29538448224860767440$, $915691894969644791040\ldots$; neither appear in the OEIS \cite{oeis}.

\section{Further Research}\label{sec:futres}

Given the scarcity of work on the problem, it is not particularly surprising that Young graphs, especially the relation between the associated number theory and graph theory, are not well understood.  This paper makes some limited progress, mostly concerning specific equivalence classes of Young graphs, from the work of Young and Sloane, but still little is known in general.  We now discuss some directions for potential further work on the problem, after which discussion we include several conjectures (Subsection \ref{subsec:conj}).

Section 5 in Sloane's paper consists of a list of open problems, which the reader may investigate, one of which asks whether there are nonisomorphic Young graphs with isomorphic underlying directed graphs.  Regarding this question, our program finds that for $g \le 336$, there are no pairs of nonisomorphic Young graphs with isomorphic underlying directed graphs; the program could not proceed past this point.  Holt also lists a number of open problems, in both algebraic and graphical lines. A particularly intriguing issue concerns a kind of graph interrelation: let us say the Young graph $Y_2$ is a \textit{Holt transform} of the graph $Y_1$ if there is a one-to-one function $\phi: (a, b)\mapsto [a, b]$ from the edges of $Y_1$ to the nodes of $Y_2$ such that if the edge $(a, b)$ leads into a node $N$ in $Y_1$, and the edge $(c, d)$ leads out of it, then the node $[a, b]$ directly succeeds the node $[c, d]$ in $Y_2$ -- the graphs $Y(14, 3)$ (Figure \ref{fig:y143}) and $Y(3, 2)$ (Figure \ref{fig:1089graph} for $k=2$ and $b=1$) are an example of such a pair. While this defines a function that most meaningfully pertains only to certain types of graphs,\footnote{For example, applied to a graph in which many edges enter and leave a single node, such as a member of $K_m$, it results in a graph that cannot be reliably traced back to the original graph, as this would require many edges between different nodes to collapse to a single node.} within that subset of graphs, considered as a map between equivalence classes, rather than the graphs themselves, it often results not only in pairs, as Holt \cite{holtAsym} most directly discussed, but sequences of transforms; for example, the class of $Y(3, 2)$ transforms to the class of $Y(14, 3)$, which transforms to the class of $Y(44, 13)$, which transforms to the class of $Y(30, 11)$, which transforms to the class of $Y(81, 32)$. Several questions naturally arise about these sequences, as to their possible lengths, their prevalence, and so on; Holt \cite{holtAsym} also demonstrates connections between such graphs and palintiple derivation.

Examination of data also suggests the following line of study: given an equivalence class $X$ of Young graphs, let $\text{Freq}_X(x)$ be the number of $Y(g, k)\in X$ with $g\le x$.  The study of these functions may shed light on the characteristics of different equivalence classes.  For example, their relative growth rates carry information about the distribution of classes amongst each other.  By Theorem \ref{thm:1089graph}, if $X$ is the set of 1089 graphs, then $\text{Freq}_X(x)$ grows roughly as $x\ln x$.  The growth rate for the set of $K_m$ graphs is presently unknown, although Corollary \ref{cor:completegraph} reduces it to the problem of computing the number of pairs $(g, k)$ with $g\le x$ and $\floor*{\gcd(g-k, k^2-1)/(k+1)}=m-1$, and they have some resemblance to functions of the form $x(\ln x)^{\alpha(m)}$.  Some graphs pertaining to this issue are included in the URL mentioned at the start of Subsection \ref{subsec:conj}.

This paper is largely devoted to characterizing a few specific equivalence classes; infinitely more remain, although it is possible that they fall into categories like the $K_m$ of complete graphs, where technically different equivalence classes follow a similar pattern. The partition of the set of pairs $(g, k)$ into equivalence classes may well be the central utility of Young's graphical approach, as it is difficult to meaningfully classify reverse multiples using only algebraic methods.  Further work may be done characterizing specific equivalence classes, for example those featured in Subsection \ref{subsec:conj}.  The graphs in the $Z_n$ present themselves as a natural next step, given the frequency of their occurrence, especially for small values of $n$, and the results of Section \ref{sec:cyclicgraphs}.  While it would be ideal to characterize all equivalence classes, this is likely a rather difficult task given present methods.  However, it might further the study of equivalence classes to study of the graphs $H(g, k)$, in addition to the graphs $Y(g, k)$ that are the focus of this paper.  It should be easier to study $H$ graphs, as their generation does not involve the additional, potentially complicating step of removing all nodes that do not precede pivot nodes.

Since Young graph isomorphism is a stronger condition than isomorphism of the underlying directed graphs, and since the set of directed graphs that may occur as the underlying directed graphs of Young graphs is a proper subset of the set of all directed graphs, Young graph isomorphism should be an easier condition to test than normal directed graph isomorphism; indeed, Theorem \ref{thm:0iso} demonstrates this concretely.  This raises the question of identifying the most efficient test for Young graph isomorphism.  Answering this question could significantly speed the computations involved in testing isomorphism, as our current program for this purpose is rather slow.  The program, given substantial time, was unable to decide whether $Y(173, 54)$ and $Y(337, 54)$ are isomorphic, whether $Y(458, 431)$ and $Y(459, 436)$ are isomorphic, and whether $Y(471, 45)$ and $Y(159, 45)$ are isomorphic; these might be good cases on which to attempt new methods.

\subsection{Conjectures}\label{subsec:conj}

A program in Java was used to generate all graphs with $g\le 336$ and to sort them into the equivalence classes determined by isomorphism.  These programs and some data on Young graphs may be found at \url{https://sites.google.com/site/younggraphs/home}; also on this site may be found a visualization of the curious absence of reverse multiples where $k\approx g$, which could be an interesting issue to investigate.  From the examination of these graphs and their isomorphism classes, we conjecture the following.

\begin{conjecture}\label{conj:y117}
For all $n\ge 0$, the Young graphs $Y(2(n+2)(n+3)-1, 2(n+2)^2-1)$, $Y(23+20n, 13+12n)$, $Y(23+15n, 17+10n)$, $Y(26+12n, 10+4n)$, and $Y(11+6n, 7+3n)$ are all isomorphic to $Y(11, 7)$.
\end{conjecture}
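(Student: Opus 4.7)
The plan is to adapt the strategy of Theorems \ref{thm:z3} and \ref{thm:z5} to each of the five families in turn: construct $H(g(n), k(n))$ by iteratively computing the direct successors of each node using equation (\ref{eq:eqnpair}) together with the bounds $0 \le a_i \le g-1$ and $0 \le r_i \le k-1$, then note that every node so obtained precedes a pivot node, so that $H(g(n), k(n)) = Y(g(n), k(n))$. The goal is to show that each family's Young graph has the same five nodes, in the same pivot-preserving labelled pattern, as $Y(11, 7)$.

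First I would compute $Y(11, 7)$ explicitly to fix the target structure. Starting at $[[0, 0]]$, the constraint $a_0 = 7 a_{n-1} + r_{n-2} \le 10$ forces $a_{n-1} = 1$; then $7 a_0 \equiv 1 \pmod{11}$ forces $a_0 = 8$, yielding the unique successor $[1, 5]$ via $(1, 8)$. Continuing in the same way at each node, one finds that $Y(11, 7)$ has exactly five nodes: the starting node, the even pivot nodes $[0, 0]$ and $[6, 6]$ (each with a self-loop), and the odd pivot nodes $[1, 5]$ and $[5, 1]$, together with the cross-edges $[1, 5] \to [6, 6]$, $[6, 6] \to [5, 1]$, $[5, 1] \to [0, 0]$, $[0, 0] \to [1, 5]$, and the paired edges $[1, 5] \to [5, 1]$ and $[5, 1] \to [1, 5]$ witnessing the odd-pivot status.

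For each of the five families I would then conjecture explicit parameterizations of the five expected nodes as functions of $n$ and verify them by solving (\ref{eq:eqnpair}) at each step. Preliminary calculations suggest, for example, that for family $5$ the nodes are $[[0, 0]]$, $[0, 0]$, $[1+n,\, 5+2n]$, $[5+2n,\, 1+n]$, and $[6+3n,\, 6+3n]$, with the first nontrivial successor $(1, 8+4n)$ emerging from $k^{-1} \equiv (2g+2)/3 \pmod g$; similar expressions can be derived for families $1$--$4$ (for instance, family $2$ gives starting successor $[3+3n,\, 9+9n]$ via $(1, 16+15n)$, and family $4$ gives $[1+n,\, 8+3n]$ via $(2,\, 21+9n)$). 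At each node, the argument reduces to a bounds step that narrows the candidate edge $(A, a)$ followed by a modular inversion modulo $g$ that pins down the digit values uniquely, closely paralleling the proofs of Theorems \ref{thm:z3} and \ref{thm:z5}. The isomorphism with $Y(11, 7)$ is then forced: Theorem \ref{thm:0iso} sends $[[0, 0]] \mapsto [[0, 0]]$ and $[0, 0] \mapsto [0, 0]$, while the remaining three nodes are distinguishable by adjacency (one even pivot with a self-loop adjacent only to the odd pivots, and the two odd pivots distinguished by whether they meet $[0, 0]$ or the self-looped even pivot).

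The main obstacle is the sheer volume of routine but delicate computation: five families times five nodes per family produces roughly twenty-five separate bounds-plus-modular-inversion steps, each with its own parameterized arithmetic. A subtlety arises in family $4$, where $\gcd(g, k) = 2$, so that $a_{n-1} = 1$ is actually impossible and one must instead take $a_{n-1} = 2$; a related wrinkle is that several of the parameterized bounds of the form $a_0' < 2$ derived in the style of Theorems \ref{thm:z3} and \ref{thm:z5} become tight only for sufficiently large $n$, so the low-$n$ boundary cases may need to be handled separately. A further concern is confirming that no unexpected direct successors appear anywhere: this requires each bounds argument to be genuinely tight for all admissible $n$. Once that case analysis is completed, the isomorphism to $Y(11, 7)$ reduces to a direct structural comparison of the five-node patterns and their pivot labels.
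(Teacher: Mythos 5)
This statement is left as an open conjecture in the paper (Conjecture \ref{conj:y117}), supported only by machine computation for $g\le 336$; the paper contains no proof of it, so there is no internal argument to compare yours against. Taken on its own terms, your strategy is the natural one --- it is precisely the method of Theorems \ref{thm:z3} and \ref{thm:z5} --- and the concrete details you do supply check out: $Y(11,7)$ has exactly the five-node structure you describe (starting node, $[0,0]$, $[6,6]$ with self-loops, and the mutually preceding odd pivot pair $[1,5]$, $[5,1]$), and your predicted first successors are correct, e.g.\ $[1+n,\,5+2n]$ via $(1,\,8+4n)$ for $Y(11+6n,\,7+3n)$, $[3+3n,\,9+9n]$ via $(1,\,16+15n)$ for $Y(23+20n,\,13+12n)$, and $[1+n,\,8+3n]$ via $(2,\,21+9n)$ for $Y(26+12n,\,10+4n)$, where you correctly observe that $\gcd(g,k)=2$ forces $a_{n-1}=2$.

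The gap is that nothing beyond these first steps is actually proved. The entire content of an argument in the style of Theorems \ref{thm:z3} and \ref{thm:z5} lies in the uniqueness claims: for each node one must show that equation (\ref{eq:eqnpair}) admits \emph{no} admissible successor other than the predicted one, uniformly in $n$, via interval and divisibility arguments that exclude every other value of $r_i$ even as $k$ grows. You defer all of these (roughly twenty-five of them, across five families and five nodes each), and you explicitly flag, without resolving, two places where the deferral is dangerous: the bounds of the form $a_0'<2$ may fail to be tight for small $n$, and ``unexpected direct successors'' must be ruled out at every node. Until those computations are carried out and the boundary cases in $n$ are dispatched --- after which the identification $H=Y$ and the pivot-preserving matching with $Y(11,7)$ are indeed immediate, as you say --- this remains a plan rather than a proof, and the conjecture stays open.
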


\begin{conjecture}\label{conj:85}
For all $n\ge 3$, $Y(n^2-1, n^2-n-1)\cong Y(8, 5)$.
\end{conjecture}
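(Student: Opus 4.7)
The plan is to compute $Y(g, k)$ explicitly for $g = n^2 - 1$ and $k = n^2 - n - 1$, following the strategy of the proofs of Theorems \ref{thm:z3} and \ref{thm:z5}, and to exhibit a bijection of node sets that is a Young graph isomorphism to $Y(8, 5)$. The key algebraic simplification is that $k = g - n \equiv -n \pmod{g}$ and $n^2 \equiv 1 \pmod{g}$, so $n$ is self-inverse modulo $g$ and $k^{-1} \equiv -n \pmod{g}$; this turns every congruence of the form $kx \equiv c \pmod{g}$ into $x \equiv -nc \pmod{g}$ with at most one solution in $\{0, 1, \ldots, g-1\}$.

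I would begin at $[[0, 0]]$ and propagate outward through $H(g, k)$ using equation (\ref{eq:eqnpair}). At each node $[R, r]$ the second equation gives $a = kA + R' - Rg$, which together with the bounds $0 \le a \le g-1$ and $0 \le R' \le k-1$ restricts $A$ to at most three consecutive integer values, and for each candidate $A$ the first equation pins $a$ down modulo $g$ uniquely. Carrying this out shows: $[[0,0]]$ has unique successor $[0, n^2-2n]$ via edge $(1, n^2-n-1)$; from $[0, n^2-2n]$ the successors are $[n-2, n-2]$ via $(0, n-2)$ and $[n-2, n^2-n-2]$ via $(1, n^2-3)$; from the even pivot $[n-2, n-2]$ the successors are $[n^2-2n, 0]$ via $(n-2, 0)$ and $[n^2-2n, n^2-2n]$ via $(n-1, n^2-n-1)$; and from $[n-2, n^2-n-2]$ the successors are $[n^2-n-2, n-2]$ via $(n-2, n-2)$ and $[n^2-n-2, n^2-n-2]$ via $(n-1, n^2-3)$. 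The remaining edges (out of $[n^2-2n, 0]$, $[n^2-2n, n^2-2n]$, $[n^2-n-2, n-2]$, and $[n^2-n-2, n^2-n-2]$) then follow by Lemma \ref{lem:edgereverse}, together with the directly verified self-loops $[0,0] \to [0,0]$ via $(0, 0)$ and $[n^2-n-2, n^2-n-2] \to [n^2-n-2, n^2-n-2]$ via $(n^2-2, n^2-2)$, yielding exactly nine nodes and 17 edges.

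Next I would verify that every node precedes a pivot node (so $Y(g, k) = H(g, k)$) and define the bijection $\phi \colon Y(8, 5) \to Y(g, k)$ by $[[0,0]] \mapsto [[0,0]]$, $[0,0] \mapsto [0,0]$, $[0,3] \mapsto [0, n^2-2n]$, $[3,0] \mapsto [n^2-2n, 0]$, $[1,1] \mapsto [n-2, n-2]$, $[3,3] \mapsto [n^2-2n, n^2-2n]$, $[4,4] \mapsto [n^2-n-2, n^2-n-2]$, $[1,4] \mapsto [n-2, n^2-n-2]$, and $[4,1] \mapsto [n^2-n-2, n-2]$. Confirming that $\phi$ is a Young graph isomorphism reduces to matching the 17 edges and checking that pivot status is preserved: the four even pivots in each graph are the diagonal nodes, and the odd pivots in $Y(g, k)$ are $[0, 0]$ and $[n^2-n-2, n^2-n-2]$ (via their self-loops), $[n^2-2n, 0]$ (directly preceding its reverse $[0, n^2-2n]$ by the symmetric edge $(n^2-n, n^2-n)$), and $[n-2, n^2-n-2]$ (directly preceding its reverse by $(n-2, n-2)$), matching precisely the four odd pivots $[0,0]$, $[4,4]$, $[3,0]$, and $[1,4]$ of $Y(8, 5)$.

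The main obstacle will be verifying uniqueness of successors at the even pivot nodes $[n-2, n-2]$, $[n^2-2n, n^2-2n]$, and $[n^2-n-2, n^2-n-2]$, where $A$ ranges over three candidate values and one must rule out exactly one of them as extraneous. The linchpin is the identity $k^{-1} \equiv -n \pmod{g}$: in each case the congruence arising from the first equation of (\ref{eq:eqnpair}) collapses, after multiplication by $n$, to $R' \equiv c \pmod{g}$ for some small $c$ (typically $n-2$ or $n^2-n-2$), forcing $R'$ to one endpoint of its admissible range $[0, k-1]$. A secondary, easier concern is that the resulting quotient $r' = (ka + r - A)/g$ lies in $[0, k-1]$; this falls out directly from the closed-form expressions, which reduce in every case to small linear or quadratic expressions in $n$ that are manifestly in range for $n \ge 3$.
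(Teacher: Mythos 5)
The statement you have chosen is Conjecture~\ref{conj:85}, which the paper leaves unproven --- it is offered only on the strength of computer examination of graphs with $g\le 336$ --- so there is no proof of record to compare against. Your plan follows the template the paper does use for Theorems~\ref{thm:z3} and~\ref{thm:z5}: use the second equation of (\ref{eq:eqnpair}) to confine $A$ to a short run of consecutive integers, then use the congruence from the first equation (here exploiting $k\equiv -n$ and $n^2\equiv 1\pmod{g}$, so that $k$ is invertible with $k^{-1}\equiv -n$) to pin down the edge and rule out the extraneous candidates. I checked your node and edge labels directly for $n=3$ (where the graph is $Y(8,5)$ itself, with nine nodes and seventeen edges) and $n=4$, and symbolically for several of the general-$n$ steps (the starting node, $[0,n^2-2n]$, and the even pivot $[n-2,n-2]$, where exactly one of the three candidates for $A$ fails); all of these are correct, the labels $0$, $n-2$, $n^2-2n$, $n^2-n-2$ are pairwise distinct for $n\ge 3$, every node precedes a pivot node so $Y=H$, and the pivot structure matches that of $Y(8,5)$.

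One step as written is circular: you cannot obtain the out-edges of $[n^2-2n,0]$, $[n^2-2n,n^2-2n]$, $[n^2-n-2,n-2]$, and $[n^2-n-2,n^2-n-2]$ ``by Lemma~\ref{lem:edgereverse}.'' That lemma shows the reversed edges \emph{exist}, but the out-edges of $[u,t]$ are in bijection with the in-edges of $[t,u]$, and you do not know all in-edges of $[0,n^2-2n]$, etc., until you know all out-edges of every node --- which is what you are trying to establish. In particular, the edge $[n^2-2n,0]\longrightarrow[0,n^2-2n]$ labeled $(n^2-n,n^2-n)$ is its own reverse and cannot arise from reversing any edge you computed directly; you recover it only implicitly in your pivot-node discussion. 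The repair is routine: run the same two-equation computation at each of those four nodes and at $[0,0]$, exactly as the paper does node by node in the proofs of Theorems~\ref{thm:z3} and~\ref{thm:z5}, and confirm that each has precisely the two claimed successors. With that done, your argument is complete and settles a conjecture the paper leaves open.
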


\begin{conjecture}\label{conj:143}
For all $t\ge 2$ and $n\ge 2$, $Y((n^2+n)t+n, n^2t+n-1)\cong Y(14, 3)$.
\end{conjecture}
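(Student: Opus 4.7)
The plan is to adapt the direct computational approach of Theorems \ref{thm:z3} and \ref{thm:z5}: generate $H(g, k)$ explicitly for $g = (n^2+n)t + n$ and $k = n^2t + n - 1$, prune to obtain $Y(g, k)$, and exhibit a pivot-preserving bijection with $Y(14, 3)$ (which corresponds to the case $n = t = 2$ giving $Y(14, 9)$). The key identity I would set up at the outset is $k(n+1) = ng - 1$, which both gives $k^{-1} \equiv -(n+1) \pmod g$ and tells us that $g - k = nt + 1$; these two facts will drive all the subsequent computations.

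First I would determine the unique direct successor of the starting node. The second part of equation (\ref{eq:eqnpair}) with $r_{n-1} = 0$ forces $a_{n-1} \le (g-1)/k < 2$ and hence $a_{n-1} = 1$, and then $k a_0 \equiv 1 \pmod g$ combined with the identity above gives $a_0 = g - (n+1)$, from which one reads off $r_{n-2} = n(t-1)$ and $r_0 = n^2 t - 1$. So the starting node has the unique direct successor $[n(t-1),\, n^2 t - 1]$ via the edge $(1,\, g - n - 1)$. I would then iterate this case analysis at each newly generated node: write down the two equations of (\ref{eq:eqnpair}), use $0 \le a_i \le g - 1$ and $0 \le r_i \le k - 1$ to restrict the candidate outgoing edge label $a'$ to a small range, and for each candidate solve the resulting divisibility condition modulo $g$ using the identity $k(n+1) = ng - 1$. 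Lemma \ref{lem:edgereverse} will cut the work roughly in half: whatever edge enters a node $[r, s]$ reappears in reversed form out of $[s, r]$. Based on the $n = t = 2$ calculation, I expect to find five nontrivial nodes forming a backbone $[n(t-1),\, n^2 t - 1] \to N_2 \to N_3 \to N_4$ (with $N_3$ the reverse of $N_2$ and $N_4$ the reverse of $[n(t-1),\, n^2 t - 1]$), together with a single branch off $N_2$ to an even pivot $[u, u]$ that has a self-loop and a single further edge down to $N_3$, a single branch off $N_4$ to $[0, 0]$, and the usual self-loop and return edge at $[0, 0]$; the pivots $N_2$, $N_4$, $[u, u]$, $[0, 0]$ then exactly match the pivot pattern of $Y(14, 3)$.

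Once $H(g, k)$ is pinned down in this closed form, I would verify that every node is a predecessor of $[0, 0]$, whence $H(g, k) = Y(g, k)$, and finally write the pivot-preserving bijection $[n(t-1),\, n^2 t - 1] \mapsto [2, 1]$, $N_2 \mapsto [1, 0]$, $[u, u] \mapsto [2, 2]$, $N_3 \mapsto [0, 1]$, $N_4 \mapsto [1, 2]$, $[0, 0] \mapsto [0, 0]$, $[[0,0]] \mapsto [[0,0]]$, checking that edges and pivot statuses are preserved. The main obstacle is the same bookkeeping headache met in the proofs of Theorems \ref{thm:z3} and \ref{thm:z5}, only more delicate because both parameters $n$ and $t$ are free: at each node one must exclude \emph{all} spurious solutions across the enlarged ranges of $a_i$ and $r_i$, and the branching nodes (those playing the roles of $[1, 0]$ and $[1, 2]$ under the bijection) are where an unnoticed divisibility coincidence could easily produce an extra, unwanted successor. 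Controlling these branching steps uniformly in $n$ and $t$ -- by showing the numerator of the would-be $r$-value has a residue mod $g$ that is nonzero except in the two expected cases -- is where I expect the bulk of the work to lie.
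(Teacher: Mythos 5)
First, a point of orientation: the paper does not prove this statement. It appears as Conjecture~\ref{conj:143} in Subsection~\ref{subsec:conj}, offered on the basis of computer examination of graphs with $g\le 336$; there is no proof in the paper to compare yours against. So the question is whether your proposal stands on its own as a complete argument. Your strategy is the right one --- it is exactly the method of Theorems~\ref{thm:z3} and~\ref{thm:z5} --- and your opening computation is correct: with $g=(n^2+n)t+n$ and $k=n^2t+n-1$ one indeed has $k(n+1)=ng-1$ and $g-k=nt+1$, the bound $ka_{n-1}\le a_0\le g-1$ forces $a_{n-1}=1$, and $ka_0\equiv 1\pmod g$ then gives $a_0=g-n-1$ and the unique first node $[n(t-1),\,n^2t-1]$ with edge $(1,\,g-n-1)$. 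Your picture of the target graph is also right (and, incidentally, more accurate than the caption of Figure~\ref{fig:y143}: the odd pivot there is $[1,2]$, the node preceding $[2,1]$, not $[2,1]$ itself); a spot check at $n=2$, $t=3$ (so $g=20$, $k=13$) reproduces exactly the shape you predict, with $N_2=[1,8]$, $[u,u]=[12,12]$, and so on, including the odd-pivot edge $N_4\to N_1$ that your ``backbone plus branches'' description omits but your pivot list implicitly requires.

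The genuine gap is that the decisive content of the proof is precisely the part you defer. You have determined one of the six non-starting nodes; the remaining five each require (a) a closed-form label in $n$ and $t$, (b) verification that the expected edges exist with labels in $[0,g-1]$ and targets with labels in $[0,k-1]$, and (c) --- the hard part --- a uniform-in-$(n,t)$ argument that no \emph{other} solution of (\ref{eq:eqnpair}) exists at that node. Step (c) cannot be waved at: at the branching nodes the admissible range for $a_{n-1-i}$ contains several integers (already at $[4,11]$ in the $n=2$, $t=3$ case one must examine two values of $A$ and dispose of one by a congruence mod $g$), and whether a spurious residue lands inside the admissible window depends delicately on both parameters. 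Until those exclusions are carried out in general --- say, by the modular computation you sketch, showing the required residue class of $a_i$ mod $g$ falls outside the interval permitted by $0\le r_{n-2-i}\le k-1$ except in the intended cases --- you have a well-founded plan and a verified base case, not a proof. I would encourage you to complete it: Lemma~\ref{lem:edgereverse} does halve the work as you say, and the identity $k(n+1)=ng-1$ appears strong enough to drive all six nodes, so the conjecture looks genuinely tractable by this route.
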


\begin{conjecture}\label{conj:3610}
For all $n\ge 2$, $Y(17+30n, 14+25n)\cong Y(36, 10)$.
\end{conjecture}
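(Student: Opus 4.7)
The plan is to model the proof on Theorems \ref{thm:z3} and \ref{thm:z5}: compute both $Y(36,10)$ and the family $Y(17+30n, 14+25n)$ as explicit labeled graphs by iteratively applying equation (\ref{eq:eqnpair}), then exhibit a pivot-preserving bijection between their node sets. The first step is to reparametrize. Setting $d = 3 + 5n$, one has $g = 6d - 1$ and $k = 5d - 1$, so $k \equiv -1 \pmod{d}$, $k + 1 = 5d = 5(g-k)$, and $6d \equiv 1 \pmod{g}$. The hypothesis $n \ge 2$ becomes $d \ge 13$. This reparametrization replaces the unwieldy linear forms in $n$ with concise relations that drive the modular arithmetic of equation (\ref{eq:eqnpair}).

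Next I would compute $Y(36, 10)$ directly, starting at $[[0,0]]$ and, at each node, solving the pair of equations (\ref{eq:eqnpair}) for all valid edge labels $(A, a)$ with $0 \le A, a \le 35$ and $0 \le r_i \le 9$. This produces a finite labeled graph $G_0$ whose nodes, edges, and pivot structure are explicit. In parallel, for $d \ge 13$, I would run the analogous iteration on $Y(6d-1, 5d-1)$. At each step, the bounds $0 \le a \le 6d - 2$ and $0 \le r \le 5d - 2$, combined with $k \equiv -1 \pmod d$, should force a unique admissible residue of $a$ modulo $6d - 1$ and hence a unique outgoing edge (with the exception of two outgoing edges at the one predecessor of $[0,0]$, exactly as in the cyclic-graph proofs). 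The resulting node labels will be low-degree polynomials in $d$ with small integer coefficients.

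Having both graphs in hand, I would define the candidate isomorphism $\phi$ by matching the $j$th node discovered in the iteration on $Y(6d-1, 5d-1)$ with the $j$th node of $G_0$. By Definition \ref{def:iso} it then suffices to check that $\phi$ preserves directed adjacency and that it sends even (respectively odd) pivot nodes to even (respectively odd) pivot nodes. Even-pivot preservation reduces to comparing which of the computed node labels have the form $[r,r]$, and odd-pivot preservation to checking, from the tabulated edges, which $[r,s]$ directly precede $[s,r]$; both are finite verifications once the label polynomials are in place.

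The main obstacle is the bookkeeping in the iterative step for general $d$. At each newly discovered node one must argue that exactly one (or, at the predecessor of $[0,0]$, exactly two) valid $(A, a)$ pair exists, which requires showing that any other residue of $a$ modulo $6d - 1$ would push $a$, $A$, or some $r_i$ outside its permissible range. The threshold $d \ge 13$ (i.e., $n \ge 2$) should emerge naturally from these range checks: for smaller $d$, some constant appearing in a node-label polynomial (for instance, in the first iteration one already encounters $d - 6$ as an $r$-coordinate) will become negative or will permit extra residues inside the allowed band, creating stray edges that destroy the isomorphism. Making this threshold sharp, and ruling out extra successors uniformly for every $d \ge 13$ at every node of $G_0$, is the technical heart of the proof; the algebra is mechanical once the reparametrization $g = 6d-1, k = 5d-1$ is in place.
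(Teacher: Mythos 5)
The paper does not prove this statement: it is stated only as Conjecture~\ref{conj:3610}, offered on the strength of machine computation of all Young graphs with $g\le 336$, so there is no proof of record to measure yours against. Your reparametrization is correct ($d=3+5n$ gives $g=6d-1$, $k=5d-1$, so $k\equiv -1\pmod d$ and $n\ge 2$ corresponds to $d\ge 13$ with $d\equiv 3\pmod 5$), and modelling the argument on Theorems~\ref{thm:z3} and~\ref{thm:z5} is the natural strategy: those proofs do exactly what you outline, marching through $H(g,k)$ node by node, showing each node has a unique successor (two at the predecessor of $[0,0]$), and recording the labels as polynomials in the parameters.

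That said, what you have written is a plan rather than a proof, and the gap is precisely where you locate it. Two points deserve emphasis. First, your parenthetical ``with the exception of two outgoing edges at the one predecessor of $[0,0]$, exactly as in the cyclic-graph proofs'' presupposes that $Y(36,10)$ has the branching structure of a cyclic graph; the paper nowhere asserts that $Y(36,10)\in Z_n$ for any $n$ (this conjecture is stated apart from Conjecture~\ref{conj:zodd} and Section~\ref{sec:cyclicgraphs}), so before the uniform-in-$d$ bookkeeping can even be set up you must actually compute $Y(36,10)$ and determine where its branch points are; if there is more than one node with two successors, the uniqueness argument must be run separately at each. Second, you quietly replace the hypothesis $d=3+5n$, $n\ge 2$ by ``all $d\ge 13$''; proving the stronger statement is fine if it is true, but if any range check in the iteration depends on the residue of $d$ modulo $5$ (which is exactly the kind of constraint that the modular step $k\equiv -1\pmod d$ can introduce), the generalization would fail while the conjecture survives, so the congruence condition should be carried along rather than discarded. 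Until the explicit node and edge labels for general $d$ are produced and the exhaustion of successors is verified at every node, the conjecture remains open.
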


Grimm and Ballew found an algorithm for generating three-digit reverse multiples, which goes as follows: take $d_1, d_2\in\mathbb{Z}^+$ such that $\gcd(d_1, d_2)=1$, and then take some $m\in\mathbb{Z}$ such that $d_2^2-d_1^2\equiv 0\pmod{(m-1)}$, $m\ge d_2+1$, $m>d_1+1$, and $\gcd(m+1, d_2)=1$.  Then let $k=(d_2^2-md_1^2)/(m-1)$, and let the pair $(M, N)$ be an integer solution to $d_1(m+1)M-d_2N=1$.  Then $(d_2t+kM, d_1(m+1)t+kN, md_2t+mkM+d_1)_g$ is a $(g, m)$ reverse multiple for $g=(m^2-1)t-mNd_1+d_2M(m+1)$, for any $t$ such that $g>m$.  Note that $d_1$ is $r_1$ and $d_2$ is $r_0$.  From examination of some of these multiples, we make Conjecture \ref{conj:GBiso}.

\begin{conjecture}\label{conj:GBiso}
If $g_1$ and $g_2$ are different bases obtained by performing the Grimm-Ballew procedure for the same $d_1$, $d_2$, and $m$, then $Y(g_1, m)\cong Y(g_2, m)$.
\end{conjecture}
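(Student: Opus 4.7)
The plan is to prove the slightly stronger statement that whenever $g_1, g_2 \ge m^2 - 1$ satisfy $g_1 \equiv g_2 \pmod{m^2 - 1}$, the identity on node labels $\phi([r, s]) = [r, s]$ is a Young graph isomorphism $Y(g_1, m) \simeq Y(g_2, m)$; I then verify that Grimm--Ballew bases sharing a common $(d_1, d_2, m)$ satisfy this congruence.

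For the stronger statement, I would solve (\ref{eq:eqnpair}) for a candidate edge from a node $[A, B]$ to a node $[C, D]$, obtaining
\begin{equation*}
a_i = \frac{g(A + mD) - (mB + C)}{m^2 - 1}, \qquad a_{n-1-i} = \frac{g(mA + D) - (mC + B)}{m^2 - 1}.
\end{equation*}
Since $m^2 \equiv 1 \pmod{m^2 - 1}$, integrality of both collapses to the single congruence $g(A + mD) \equiv mB + C \pmod{m^2 - 1}$, which depends only on $g \bmod (m^2 - 1)$. A short case analysis of the range constraints $0 \le a_i, a_{n-1-i} \le g - 1$ then shows that, for $g \ge m^2 - 1$, they reduce to two $g$-independent boundary exclusions: among edges from $[0, B]$ to $[C, 0]$ only the self-loop at $[0, 0]$ is permitted (forcing $B = C = 0$), and similarly the only edge from a node $[m-1, B]$ into a node $[C, m-1]$ is the self-loop at $[m-1, m-1]$. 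Hence the labeled adjacency of $H(g, m)$ is determined by $g \bmod (m^2 - 1)$, so $\phi$ identifies $H(g_1, m)$ with $H(g_2, m)$ as directed graphs; the restriction to $Y$ is preserved since node-survival (preceding a pivot) is purely adjacency-theoretic, and pivot identification is invariant (the even pivots $[a, a]$ are fixed under $\phi$, and the odd-pivot condition $[r, s] \to [s, r]$ is invariant under the identity on labels).

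To establish the congruence for Grimm--Ballew bases, write $g = (m^2 - 1)t + g_0$ with $g_0 = -mNd_1 + d_2 M(m+1)$. The $t$-dependence is visibly a multiple of $m^2 - 1$, and changing the B\'ezout pair via $(M, N) \mapsto (M + d_2 \ell, N + d_1(m+1)\ell)$ alters $g_0$ by $\ell(m+1)(d_2^2 - md_1^2) = \ell(m^2 - 1)k$, where $k = (d_2^2 - md_1^2)/(m-1)$ is the Grimm--Ballew constant. Thus $g_0 \bmod (m^2 - 1)$ depends only on $(d_1, d_2, m)$, and all such bases share the residue.

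The principal obstacle is handling Grimm--Ballew bases with $g < m^2 - 1$: the procedure imposes only $g > m$, and for such small bases the uniform range reduction above may fail, potentially making the edge criterion genuinely depend on $g$, not merely on $g \bmod (m^2 - 1)$. I expect these residual cases can be dispatched by a refined bound analysis---identifying the finitely many sensitive label-quadruples $(A, B, C, D)$ and checking they still cooperate---or by direct case-by-case computation, since for fixed $(d_1, d_2, m)$ only finitely many small bases arise. A minor point worth noting is that Young graph isomorphism (Definition \ref{def:iso}) disregards edge labels and only demands pivot preservation of the underlying directed graph, so the fact that edge labels generally differ between $Y(g_1, m)$ and $Y(g_2, m)$ is not itself an obstruction.
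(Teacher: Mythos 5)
First, a point of reference: the statement you are proving is Conjecture~\ref{conj:GBiso}, for which the paper offers no proof at all, so your argument has to stand entirely on its own.

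Your central reduction is sound as far as it goes. The solved forms $a_i=\bigl(g(A+mD)-(mB+C)\bigr)/(m^2-1)$ and $a_{n-1-i}=\bigl(g(mA+D)-(mC+B)\bigr)/(m^2-1)$ for a candidate edge $[A,B]\longrightarrow[C,D]$ are correct; multiplying the first integrality condition by $m$ and using $m^2\equiv 1\pmod{m^2-1}$ does collapse the two conditions to one congruence depending only on $g\bmod (m^2-1)$; and for $g\ge m^2-1$ the constraints $0\le a_i, a_{n-1-i}\le g-1$ do reduce to exactly the two $g$-independent boundary exclusions you name (plus the $g$-independent exclusion of zero labels out of $[[0,0]]$, which only ever rules out the target $[0,0]$). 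Since reachability from the starting node, pivot-hood, and the pruning from $H$ to $Y$ are all determined by this labeled adjacency, the identity on node labels is indeed a Young graph isomorphism for two bases $\ge m^2-1$ in the same residue class, and Definition~\ref{def:iso} ignores edge labels as you say. The computation that the Grimm--Ballew base is well defined modulo $m^2-1$ --- in $t$ trivially, and across B\'ezout pairs via $\Delta g_0=\ell(m+1)(d_2^2-md_1^2)=\ell(m^2-1)\cdot(d_2^2-md_1^2)/(m-1)$ --- is also correct.

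The gap is the one you flag yourself, and it is not a dismissible formality. The Grimm--Ballew procedure admits any $t$ with $g>m$, so whenever the residue of $g$ modulo $m^2-1$ lies in the interval $(m, m^2-1)$ the procedure actually produces a base strictly between $m$ and $m^2-1$, and for such a base your range reduction genuinely fails: for example the candidate edge from $[1,B]$ to $[C,0]$ has $a_i=(g-mB-C)/(m^2-1)$, which is negative precisely when $mB+C>g$ --- a condition on $g$ itself, not on $g\bmod(m^2-1)$. So the candidate edge sets of $H(g_1,m)$ and $H(g_2,m)$ can differ when one of the bases is small, and the identity map need no longer be an isomorphism. To close the conjecture you would have to show either that the discrepant candidate edges never satisfy the congruence or never survive the pruning to $Y$, or that the resulting graphs are nonetheless isomorphic by some other map; ``finitely many sensitive quadruples'' is not by itself a finite check, because for fixed $(d_1,d_2,m)$ the small base is a single specific $g$ but the claim must hold for every admissible $(d_1,d_2,m)$. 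As written, your argument establishes the conjecture only for pairs of Grimm--Ballew bases both of size at least $m^2-1$ (a genuinely useful partial result, and one that also bears on the other isomorphism conjectures of Subsection~\ref{subsec:conj} in which $k$ is held fixed), but not the conjecture itself.
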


\begin{conjecture}\label{conj:classgrowth}
Let $\text{Classes}(x)$ denote the number of equivalence classes that occur in all $Y(g, k)$ for $g\le x$.  $\text{Classes}(x)$ grows roughly proportionately to $x^2$.
\end{conjecture}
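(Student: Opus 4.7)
The plan is to establish $\text{Classes}(x) = \Theta(x^2)$ by matching upper and lower bounds. The upper bound $\text{Classes}(x) \le \binom{x-1}{2}$ is immediate, since each pair $(g,k)$ with $2 \le k < g \le x$ contributes at most one equivalence class. The substantive work therefore lies in proving the lower bound $\text{Classes}(x) = \Omega(x^2)$, which is equivalent to showing that the average size of an isomorphism class, restricted to pairs with $g \le x$, is bounded by a constant.

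My approach would be to compile a list of numerical invariants of Young graph isomorphism that can be read off directly from $(g,k)$: the number of nodes $|V(Y(g,k))|$, the number of edges, the number of pivot nodes of each parity, the in- and out-degrees of $[0,0]$ and of the starting node (both of which Theorem \ref{thm:0iso} shows are preserved by any isomorphism $\phi$), the existence and length of short cycles through $[0,0]$, and the path counts to each pivot node at a given distance (which Young's theorem (Theorem \ref{thm:youngsthm}) ties to counts of reverse multiples with a given number of digits). The first step is to establish that some tuple of these invariants separates pairs finely enough that only $o(x^2)$ of them can share any given value. A particularly natural candidate is the direct-successor data at the starting node, which by construction encodes specific arithmetical conditions on $g$ and $k$ through equation (\ref{eq:eqnpair}).

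To conclude the counting, I would subtract off the ``known collapses''. By Theorem \ref{thm:1089graph} the 1089 class absorbs only $O(x \log x)$ pairs; by Corollary \ref{cor:completegraph} the union $\bigcup_m K_m$ absorbs only $O(x^{1+\varepsilon})$ pairs, since $\gcd(g-k,k^2-1)$ is typically much smaller than $k$; and the conjectured families in Subsection \ref{subsec:conj} likewise cover $o(x^2)$ pairs. If one can show, by exhibiting a rigid invariant, that the remaining ``generic'' pairs occupy equivalence classes of size $O(1)$, the $\Omega(x^2)$ lower bound follows. A complementary empirical step, continuing the data already gathered for $g \le 336$, would be to monitor the ratio $\text{Classes}(x)/x^2$ to check that it stabilizes and to inform which invariant tuples are most discriminating.

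The main obstacle, which explains why this has remained a conjecture, is precisely that Young graph isomorphism is a combinatorial condition largely divorced from the base-$g$ arithmetic defining $Y(g,k)$, so bounding the size of an equivalence class requires a converse to the structural lemmas of Sections \ref{sec:1089graphs}--\ref{sec:cyclicgraphs}: one must show that pairs yielding the same graph are forced into a narrow arithmetic relationship. The author's own program was unable to decide isomorphism in certain small cases, suggesting that rigorous control over equivalence-class sizes will require either a much improved isomorphism test (likely exploiting Theorem \ref{thm:0iso} and the rigidity imposed by the pair-labels on edges) or a new invariant that captures $g$ more faithfully than the purely combinatorial ones. I would expect the most promising direction, along the lines suggested in Section \ref{sec:futres}, to be an analysis of the graphs $H(g,k)$ rather than the truncated graphs $Y(g,k)$, since $H(g,k)$ retains the arithmetic data that $Y(g,k)$ discards.
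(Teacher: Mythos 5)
The statement you are asked to prove is one the paper itself does not prove: it is Conjecture \ref{conj:classgrowth}, offered ``rather hesitantly'' on the strength of a regression ($r\approx 0.9996$) over the data for $g\le 336$, and the author explicitly warns that $336$ is likely too small a range to trust (citing the failure of the analogous $x\ln x$ fits for $\text{Freq}_{K_m}$ once $x\le 10^5$ is examined). Your proposal is therefore being measured against no proof at all, and judged on its own terms it is a research plan rather than an argument. The upper bound $\text{Classes}(x)=O(x^2)$ is indeed trivial, but everything substantive is deferred: the entire lower bound rests on the sentence ``if one can show, by exhibiting a rigid invariant, that the remaining generic pairs occupy equivalence classes of size $O(1)$, the $\Omega(x^2)$ lower bound follows.'' That rigid invariant is precisely the missing mathematics, and none of the invariants you list (node counts, degrees at $[0,0]$, successor data at the starting node) is shown to separate $\Omega(x^2)$ pairs; the paper's own evidence that its program cannot decide isomorphism for pairs such as $Y(173,54)$ versus $Y(337,54)$ suggests these coarse invariants frequently coincide.

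Two further points in the plan would fail as stated. First, ``subtracting off the known collapses'' is logically inert for a lower bound on the number of classes: removing the 1089 and complete classes leaves $\Theta(x^2)$ pairs, but nothing prevents those from collapsing into $o(x^2)$ classes (indeed Conjectures \ref{conj:y117}--\ref{conj:GBiso} exhibit several infinite families that do collapse). Second, you would first need to know that $\Omega(x^2)$ of the pairs $(g,k)$ even produce non-degenerate Young graphs; the paper's footnote in Section \ref{sec:1089graphs} notes that characterizing the degenerate class is equivalent to characterizing all $(g,k)$ admitting reverse multiples, which is open, and all degenerate pairs contribute a single equivalence class. Your closing instinct --- that one must extract a converse arithmetic rigidity statement, perhaps from $H(g,k)$ rather than $Y(g,k)$ --- is a sensible diagnosis of why this remains a conjecture, but it is a diagnosis, not a proof.
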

Regarding this conjecture, we find that the $r$-value for the set $\{(x, \text{Classes}(x)^{1/2})\mid x\le 336\}$ is around $0.9996$.  However, $336$ seems to be a rather low upper bound for this type of problem: for example, the functions $\text{Freq}_{K_m}(x)$, for $x\le 400$ are well-approximated by functions of the form $c\cdot x\ln x$, but for $x\le 10^5$ it becomes apparent that such an approximation fails.  Conjecture \ref{conj:classgrowth} is thus made rather hesitantly.

\section{Acknowledgements}\label{sec:acks}
Many thanks to Professor Ramin Takloo-Bighash of the University of Illinois at Chicago for suggesting this problem for research, to Mr. Paul Karafiol of the Ogden International School of Chicago and Prof. Benjamin Klaff of Tulane University for their mentoring and encouragement, to Dr. Tanya Khovanova of the Massachusetts Institute of Technology for her help with editing, and to the referee for many helpful comments.

\bibliography{YGbib}

\end{document}